\newtheorem{theorem}{Theorem}[section]
\newtheorem{lemma}[theorem]{Lemma}
\newtheorem{prop}[theorem]{Proposition}
\newtheorem{corollary}[theorem]{Corollary}
\theoremstyle{definition}
\newtheorem{defn}[theorem]{Definition}
\newtheorem{example}[theorem]{Example}
\newtheorem{remark}[theorem]{Remark}
\numberwithin{equation}{section}
\def\bz{{\bar 0}}
\def\bo{{\bar 1}}
\def\codim{\text{codim}}
\def\ad{\text{ad}}
\def\GL{{\text{GL}}}
\def\Spec{\text{Spec}}
\def\gr{\mathrm{gr}}
\def\F{\text{F}}
\def\Prim{\mathrm{Prim}}
\def\Ann{\mathrm {Ann}}
\def\bbz{\mathbb Z}
\def\bbc{\mathbb C}
 \def\LIM{ \varinjlim }
\def\cu{\mathcal {U}}
\def\F{\mathrm {F}}
\def\ad{\mathrm {ad}}
\def\spec{\mathrm {Spec}}
\def\deg{\mathrm {deg}}
\def\fin{\mathrm {cofin}}
\def\abz{A_{\bar{0}}}
\def\abo{A_{\bar{1}}}
\def\gbz{\mathfrak{g}_{\bar{0}}}
\def\gbo{\mathfrak{g}_{\bar{1}}}
\def\mmm{{\mathfrak m}}
\def\ppp{{\mathfrak p}}
\def\ggg{{\mathfrak g}}
\def\vvv{{\mathfrak v}}
\def\bz{{\bar 0}}
\def\bo{{\bar 1}}
\newcommand{\rmnum}[1]{\romannumeral #1}
\newcommand{\Rmnum}[1]{\expandafter\@slowromancap\romannumeral #1@}
\begin{document}

\title[Super formal Darboux-Weinstein theorem and finite W-superalgebras]{Super formal Daboux-Weinstein theorem and finite $W$-superalgebras}

\author{Bin Shu, Husileng Xiao}
\address{School of mathematical Science, Harbin Engineering University, Harbin, 15001, China.}\email{hsl1523@163.com; hslxiao@hrbeu.edu.cn}
\address{School of mathematical Science, East China Normal University, Shanghai, 200241, China.}\email{bshu@math.ecnu.edu.cn}
\thanks{   }
\maketitle
\begin{abstract}
Let $\vvv=\vvv_{\bar{0}}+\vvv_{\bar{1}}$ be a $\mathbb{Z}_2$-graded (super) vector space with an even $\mathbb{C}^{\times}$-action and $\chi \in \vvv_{\bar{0}}^{*}$ be a fixed point of the induced action.
In this paper we  prove an equivariant Darboux-Weinstein theorem for the formal polynomial algebras $\hat{A}=S[\vvv_{\bar{0}}]^{\wedge_{\chi}}\otimes \bigwedge(\vvv_{\bar{1}})$. We also give a quantum version of it. Let $\ggg=\ggg_{\bar{0}}+\ggg_{\bar{1}}$ be a basic Lie superalgebra and $e \in \ggg_{\bar{0}}$ be a nilpotent element. We use the equivariant quantum Darboux-Weinstein theorem to give a Poisson geometric realization of the finite $W$-superalgebra $\mathcal{U}(\ggg,e)$ in the sense of Losev. An indirect relation between finite  $W$-(super)algebras $\mathcal{U}(\ggg,e)$ and $\mathcal{U}(\ggg_{\bar{0}},e)$ is presented. Finally we use such a realization to study the finite-dimensional irreducible modules over $\mathcal{U}(\ggg,e)$.
\end{abstract}

\section{introduction}

\subsection{ Darboux-Weinstein theorems}

In \cite{Wei}, the author proves the Darboux-Weinstein (DW) theorem, which states that each Poisson manifold is locally the product of a symplectic manifold and a Poisson manifold having a point where the rank is zero. We recall the formal version of this theorem.

Let $\hat{A}_{\bar{0}}=S[[\vvv_{\bar{0}}]]$ be the formal power series algebra of a vector space $\vvv_{\bar{0}}$ over $\mathbb{C}$  and $\omega:=\{\cdot, \cdot \}$ be a continuous Poisson bracket on $\hat{A}_{\bar{0}}$. The DW theorem says that $\hat{A}_{\bar{0}}$ can be split as a product
$$\hat{A}_{\bar{0}}\cong S[[V_{\bar{0}}]]\otimes \hat{B}_{\bar{0}}$$
of Poisson algebras (see \cite{Kal} for the proof). Here the Poisson product on the formal power algebra $S[[V_{\bar{0}}]]$ arises naturally from a symplectic subspace $V_{\bar{0}}\subset \vvv_\bz$ on which the restriction of the Poisson bi-vector is non-degenerate,  while  $\hat{B}_\bz$ is the centralizer of $V_\bz$  in $\hat{A}_{\bar{0}}$ with respect to the Poisson product (c.f. Example \ref{example1}).

This theorem plays a fundamental role in the theory of symplectic singularities (c.f. \cite{Kal}). In the recent works of Losev on the representations of certain quantum algebras, the quantum DW (qDW) theorem is always a powerful tool. Those quantum algebras include finite W-algebras, symplectic reflection algebras and rational Cherendick algebras, and arise from the formal quantization of Poisson algebras.

\subsubsection{} In the present paper we consider an equivariant super DW (esDW) theorem  and its quantum version. Then we use it to study the representations of finite $W$-superalgebras.

\begin{defn}
Let  $A=\abz+\abo$ be a $\mathbb{Z}_{2}$-graded commutative algebra over the complex number field $\mathbb{C}$.
A Poisson bracket $\{\cdot, \cdot \}$ on $A$ is a linear map $A \otimes A \rightarrow A$ with
\begin{itemize}
\item[(1)] $\{\cdot, \cdot \}$ is a Lie superalgebra;
\item[(2)]  $\{f,gh\}=\{f,g\}h+(-1)^{|f||g|}g\{f,h\}$ for all homogenous $f,g,h \in A$.
\end{itemize}
A Poisson bracket $\{\cdot, \cdot \}$ is called even (\textit{resp}. odd) if it is an even (\textit{resp}. odd) linear map between vector superspaces.
\end{defn}

\begin{example} \label{example1}
\begin{itemize}
\item[(1)]  Let $\vvv$ be a vector superspace of dimension $(2n,m)$ equipped with a super symplectic form $\omega$.
Then there is a standard Poisson bracket on $S[\vvv]$  given by $\{x , y\}= \omega(x,y)$ for all $x,y \in \vvv$.
\item[(2)] Let $\ggg$ be a Lie superalgebra, then $S[\ggg]$ also has a standard Poisson structure given by
 $\{x , y \}= [x,y]$ for all $x,y \in \ggg$.
\end{itemize}
\end{example}

Let $\vvv=\vvv_{\bar{0}}+\vvv_{\bar{1}}$ be a vector superspace and $A$ be the super symmetric polynomial algebra $S[\vvv]=S[\vvv_{\bar{0}}]\otimes \bigwedge(\vvv_{\bar{1}})$. For a fixed $\chi \in \vvv_{\bar{0}}^*$, let $M_{\chi}$ be the maximal ideal of $S[\vvv]$ corresponding to $\chi$ i.e.
$M_{\chi}$ is the ideal generated by $x-\chi(x)$ for $x \in \vvv_{\bar{0}} \oplus \vvv_{\bar{1}}$.
Let $A^{\wedge_\chi}$ be the completion of $A$ at $\chi$. Namely $A^{\wedge_\chi}=\varprojlim A/AM_\chi^{k}$ is the topological algebra with the ideals  $AM^{k}_\chi$ forming a set of fundamental neighborhoods of $0 \in A^{\wedge_\chi}$. From now on, the formal power series superalgebra  $A^{\wedge_\chi}$ for a fixed $\chi$ is denoted by $\hat{A}$ in the present paper.

Suppose that there is a continuous $\mathbb{C}^{\times}$-action on $\hat{A}$ which arises from an even linear algebraic $\mathbb{C}^{\times}$-action on $\vvv$. Let $\chi \in \vvv_{\bar{0}}^*$ be a fixed point for the induced action.
Assume that there is an integer $ k \in \mathbb{Z}$ such that $\{t\cdot f , t\cdot g \}=t^{k}\{f , g\}$ for all $f,g \in \hat{A}$ and $t \in \mathbb{C}^{\times}$. Let $V \subset \mathcal{T}_\chi^{*}(\spec(A)) = \vvv$ be a subspace such that the restriction of the Poisson bi-vector $\Pi$ (see \eqref{bivectr eq} for the definition) on $V$ is non-degenerate. In  \S 2 we prove the following $\mathbb{C}^{\times}$-equivariant  DW theorem for the formal  polynomial superalgebra $\hat{A}$ with an even Poisson bracket.

\begin{theorem}[Equivariant super DW (esDW) theorem]\label{EDWthm}
In the above setting, we have the following $\mathbb{C}^{\times}$-equivariant isomorphism of Poisson superalgebras
$$ \phi: S[[V]]\otimes  \hat{B} \longrightarrow \hat{A},$$
where $\hat{B}$ is the centralizer of $V$ in $\hat{A}$ with respect to $\{\cdot , \cdot \}$, which is a Poisson subalgebra of $\hat{A}$, and the restriction of $\phi$ on $\hat{B}$ is the identity map.
\end{theorem}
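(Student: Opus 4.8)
The plan is to adapt the classical (even) formal Darboux–Weinstein argument to the $\mathbb{Z}_2$-graded setting, adding bookkeeping for the odd variables and, crucially, for the $\mathbb{C}^{\times}$-equivariance. The strategy is an inductive construction of $\phi$ one "symplectic pair" at a time: pick a basis $p_1,\dots,p_r,q_1,\dots,q_r$ of $V$ adapted to the non-degenerate restriction of $\Pi$, and build the isomorphism by peeling off one Heisenberg pair $(p_i,q_i)$ at a step. At each step one produces elements $P_i,Q_i\in\hat A$ with $\{P_i,Q_i\}=1$ (or a nonzero scalar, normalizable by the $\mathbb{C}^{\times}$-action) and $\{P_i,P_j\}=\{P_i,Q_j\}=\{Q_i,Q_j\}=0$ for $j<i$, each $P_i\equiv p_i$, $Q_i\equiv q_i$ modulo $M_\chi^2$; then $\hat A$ splits as $\mathbb{C}[[P_i,Q_i]]\,\widehat\otimes\,\hat A_i$, where $\hat A_i$ is the Poisson centralizer of $P_i,Q_i$, and one recurses on $\hat A_i$. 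After $r$ steps the centralizer is exactly $\hat B$ and $\phi$ is the resulting map from $S[[V]]\otimes\hat B\cong\mathbb{C}[[p_1,q_1,\dots,p_r,q_r]]\,\widehat\otimes\,\hat B$.

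**The two technical ingredients** are: (i) a "Poisson flow" / Moser-type argument in the formal category — given $P$ with $dP\neq 0$ at $\chi$, the Hamiltonian derivation $\{P,-\}$ is (topologically) locally nilpotent on the relevant graded pieces or at least integrable to an automorphism, which lets one find $Q$ with $\{P,Q\}=1$ and then straighten coordinates; and (ii) the splitting lemma: if $\{P,Q\}=1$ in $\hat A$ then $\hat A\cong\mathbb{C}[[P,Q]]\,\widehat\otimes\,Z$ as Poisson algebras, $Z$ the centralizer of $\{P,Q\}$, proved by writing any $f$ via the Hamiltonian exponentials $\exp(x\{Q,-\})\exp(y\{P,-\})$ of $P$ and $Q$ and checking these converge $M_\chi$-adically. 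In the super case the odd generators $\vvv_{\bar 1}$ are handled by the same formulas — $\bigwedge(\vvv_{\bar 1})$ is finite-dimensional so all odd sums are automatically finite, and the signs in the Leibniz rule are exactly those making $\{P,-\}$ an even derivation (the bracket is assumed even), so no new phenomena appear beyond sign-tracking. The $\mathbb{C}^{\times}$-equivariance is built in by choosing $V$ to be $\mathbb{C}^{\times}$-stable with weight-homogeneous basis vectors (possible since $\chi$ is a fixed point and the action on $\vvv$ is linear), running the construction $\mathbb{C}^{\times}$-equivariantly: at each stage one selects $P_i,Q_i$ to be weight vectors (using that $\{\cdot,\cdot\}$ scales by $t^k$, the pairing $\{P_i,Q_i\}$ forces compatible weights, and after rescaling one may take $\{P_i,Q_i\}=1$), so every map in the induction intertwines the actions; the final $\phi$ is then automatically $\mathbb{C}^{\times}$-equivariant and restricts to the identity on $\hat B$ by construction.

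**The main obstacle** I expect is the convergence/integrability step (i): making precise in what sense the Hamiltonian derivations $\{P,-\}$ and $\{Q,-\}$ can be exponentiated on the completed algebra $\hat A$, i.e. that $\exp(t\{P,-\})$ is a well-defined continuous automorphism of $\hat A=A^{\wedge_\chi}$. This requires controlling how these derivations interact with the $M_\chi$-adic filtration — one wants something like $\{P,-\}$ raising (or preserving) filtration degree after suitable normalization of $P$ (arranging $P\in M_\chi$, $dP|_\chi\neq0$), so that termwise the exponential series stabilizes modulo each $M_\chi^k$. The $\mathbb{C}^{\times}$-grading actually helps here: decomposing $\hat A$ into (completed) weight spaces and using the scaling relation $\{t\cdot f,t\cdot g\}=t^k\{f,g\}$ gives a handle on how Hamiltonian vector fields shift weights, which one combines with the $M_\chi$-adic estimate. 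Once the exponential maps are under control, the splitting lemma and the induction are essentially formal; one then checks that $\phi$ is a morphism of Poisson superalgebras (bracket compatibility is immediate since each factor map is Poisson and the Heisenberg factor commutes Poisson-theoretically with its centralizer), that it is bijective (the $M_\chi$-adic associated graded map is an iso, so $\phi$ is by completeness), and that it is $\mathbb{C}^{\times}$-equivariant, completing the proof.
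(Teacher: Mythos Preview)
Your even--case strategy (peel off a Heisenberg pair, split, induct) matches the paper's approach, and your splitting lemma is essentially the paper's Lemma~2.1(ii): the formula $a-\sum_{i,j}(-1)^{i+j}\frac{1}{i!j!}g^i f^j\,\ad_f^i\ad_g^j(a)$ lands in $\ker\ad_f\cap\ker\ad_g$, which is your exponential written out. So far so good.

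The gap is in your treatment of the odd part. Your assertion that ``no new phenomena appear beyond sign-tracking'' is not correct, and this is exactly where the paper does real work. For odd $P,Q$ the bracket $\{P,Q\}$ is \emph{symmetric}, so self-brackets $\{P,P\}$ need not vanish; the relevant primitive building block is not a pair $(P,Q)$ with $\{P,Q\}=1$ but a single odd $h$ with $\{h,h\}=1$, and the splitting step (the paper's Lemma~2.2(ii)) is $\hat A\cong \mathbb{C}h\otimes\ker\ad_h$ via the identity $f=(f-h\{h,f\})+h\cdot\{h,f\}$, with $\ad_h^2=\frac12\ad_{\{h,h\}}=\frac12\,\mathrm{id}$ forcing the decomposition. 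Your exponential $\exp(x\{P,-\})$ does not make sense here: $\{P,-\}$ is an \emph{odd} derivation, so iterating it does not produce a convergent automorphism in the usual sense, and the Moser-type argument breaks down.

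More seriously, the $\mathbb{C}^\times$-equivariance interacts nontrivially with the odd case --- the paper flags this explicitly (``In the super setting, there is a significant difference between equivariant and non-equivariant cases''). Given odd $\mathbb{C}^\times$-homogeneous $f,g$ with $\{f,g\}$ invertible, the naive move $h=(f+g)/\sqrt{\{f+g,f+g\}}$ destroys homogeneity since $f$ and $g$ typically have different weights. The paper therefore splits into two cases: if one of $\{f,f\},\{g,g\}$ is already invertible, rescale that element; otherwise both lie in $M_\chi$, and one must run an iterative correction (subtracting multiples of $g$ from $f$ and vice versa, taking limits) to produce homogeneous $f,g$ with $\{f,f\}=\{g,g\}=0$ and $\{f,g\}=1$, and only then pass to $h^\pm=f\pm g$ inside the $\mathbb{C}^\times$-stable span $\mathbb{C}\langle f,g\rangle$. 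Your proposal does not engage with this case distinction, and without it the equivariant induction does not go through.
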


We prove the theorem by constructing such an isomorphism explicitly. In the case if $V= \vvv$ and $\vvv$ is pure even, our proof is more direct than the one in \cite{Kal}. In the super setting, there is a significant difference between  equivariant and non-equivariant cases.

\subsubsection{}\label{sec 1.1} For an associative superalgebra $A$, denote by $A[[\hbar]]$ the vector space of formal power series of variable $\hbar$ with coefficients in $A$.
\begin{defn}
Let $A$ be Poisson superalgebra, a \textit{formal  quantization} of $A$ is a pair $(A[[\hbar]], \ast)$, where
the  star product $\ast : A[[\hbar]] \otimes_{\mathbb{C}[[\hbar]]} A[[\hbar]] \longrightarrow A[[\hbar]]$ satisfies
\begin{itemize}
\item[(1)] $\ast$ is associative;
\item[(2)] for all $\mathbb{Z}_2$-homogenous $f,g \in  A $,
$$ f\ast g-fg \in A[[\hbar]] \hbar^{2}\quad\footnote{Usually $\hbar^{2}$ is replaced by $\hbar$, here we need such a slight modification for its apllication. }\quad  \text{and} \quad f\ast g -(-1)^{|f||g|}g\ast f-\{f,g\}\hbar^{2} \in A[[\hbar]] \hbar^3
.$$
\end{itemize}
\end{defn}
For a Poisson superalgebra $A$ with a $\mathbb{C}^{\times}$-action, by a \textit{ $\mathbb{C}^{\times}$-equivariant quantization } $(A[[\hbar]], \ast)$ we mean that there is a $\mathbb{C}^{\times}$-action on $A[[\hbar]]$ such that the quotient map  $A[[\hbar]] \rightarrow A[[\hbar]]/A[[\hbar]]\hbar=A$ is $\mathbb{C}^{\times}$-equivariant.

\begin{example}\label{ex quan}
\begin{itemize}
\item[(1)] Let $(\vvv,\omega)$ be a symplectic  superspace. Denote by $(\mathbf{A}_{\hbar}(\vvv),\ast)$  the
$\mathbb{C}[[\hbar]]$-algebra generated by basis $v_1,v_2, \ldots, v_{n}$ of $\vvv$ with commutating relations
$$v_i \ast v_j-(-1)^{|v_i||v_j|}v_j \ast v_i=\omega(v_i,v_j)\hbar^{2} $$
for all $ 1 \leq i ,j\leq n$. Taking $\hbar=1$, we obtain the Weyl algebra $\mathbf{A}(\vvv)$ from $S[\mathfrak{v}][\hbar] \subset \mathbf{A}_{\hbar}(\vvv)$. We have the following isomorphism of $\mathbb{C}[[\hbar]]$ spaces
$$S[\vvv][[\hbar]] \longrightarrow \mathbf{A}_{\hbar}(\vvv) \quad  v_{1}^{i_{1}} \cdots v_{n}^{i_n} \mapsto v_{1}^{i_{1}} \ast \cdots \ast v_{n}^{i_n}.$$
Under this identification, it is easy to check that $\mathbf{A}_{\hbar}(\vvv)$ is a quantization of the Poisson algebra $S[\vvv]$.

\item[(2)] For a Lie superalgebra $\ggg$,  the Poisson algebra $S[\ggg]$ has a standard quantization $(S[\ggg][[\hbar]],\ast)$, which is the quotient of the formal  tensor  algebra $T(\ggg)[[\hbar]]$ modulo the two sided ideal generated by
$$x\otimes y -(-1)^{|x||y|}y\otimes x =[x,y]\hbar^2 $$
for all the $x,y \in \ggg$.
Taking $\hbar=1$ we can recover the  universal enveloping algebra  $\mathcal{U}(\ggg)$ from $S[\ggg][\hbar] \subset S[\ggg][[\hbar]]$.
\end{itemize}
\end{example}

\begin{theorem}[Super quantum equivariant DW (sqeDW) theorem] \label{SQDWtm}
Let  $\hat{A},V $ be as in Theorem \ref{EDWthm} and  $(\hat{A}[[\hbar]],\ast )$ be a $\mathbb{C}^{\times}$-equivariant quantization of $\hat{A}$.
Then there exists a $\mathbb{C}^{\times}$-equivariant isomorphism of $\mathbb{C}[[\hbar]]$ algebras
$$ \Phi_{\hbar}:  \mathbf{A}_{\hbar}(V)^{\wedge_0} \otimes_{\mathbb{C}[[\hbar]]} \hat{B}_{\hbar} \longrightarrow \hat{A}[[\hbar]] $$
Here $\hat{B}_{\hbar}$ is a quantization of $\hat{B}$.
\end{theorem}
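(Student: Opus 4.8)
The plan is to run the proof of Theorem~\ref{EDWthm} at the quantized level, using the formalism of $\hbar$-adically complete algebras: such an algebra, when $\hbar$-torsion-free, is rigid enough that its structure is controlled order by order by its reduction modulo $\hbar$, and a morphism between two such algebras that is an isomorphism modulo $\hbar$ is automatically an isomorphism. First I would apply Theorem~\ref{EDWthm} and transport the star product through the $\mathbb{C}^{\times}$-equivariant Poisson isomorphism $\phi\colon S[[V]]\otimes\hat B\to\hat A$ (under which the completion defining $\hat A$ corresponds to the completion at the origin of the $S[[V]]$-factor), reducing to the case $\hat A=S[[V]]\otimes\hat B$ with $\ast$ a $\mathbb{C}^{\times}$-equivariant quantization of this Poisson product. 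The task is then to split off a copy of $\mathbf{A}_{\hbar}(V)^{\wedge_0}$ from $(\hat A[[\hbar]],\ast)$.

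Second, I would quantize the symplectic factor. Fix a $\mathbb{C}^{\times}$-homogeneous symplectic basis $v_1,\dots,v_N$ of $V$. Its elements satisfy the Weyl--Clifford relations $v_i\ast v_j-(-1)^{|v_i||v_j|}v_j\ast v_i=\omega(v_i,v_j)\hbar^2$ only modulo $\hbar^3\hat A[[\hbar]]$, and I would produce, by successive approximation in powers of $\hbar$, new $\mathbb{C}^{\times}$-homogeneous elements $\tilde v_i\in\hat A[[\hbar]]$ with $\tilde v_i\equiv v_i$ modulo $\hbar\hat A[[\hbar]]$ satisfying these relations exactly. The obstruction to improving an approximate solution is a $2$-cocycle in the deformation complex of the (super) Weyl algebra; since every such cocycle is a coboundary---equivalently, every even or odd derivation of $\mathbf{A}_{\hbar}(V)^{\wedge_0}$ is inner and every automorphism trivial modulo $\hbar$ is inner---the obstruction vanishes, and the correction can be chosen $\mathbb{C}^{\times}$-homogeneous of the required weight. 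The homogeneity hypothesis $\{t\cdot f,t\cdot g\}=t^{k}\{f,g\}$ fixes the $\mathbb{C}^{\times}$-weight of $\hbar$ and renders the whole induction $\mathbb{C}^{\times}$-equivariant, while $M_{\chi}$-adic and $\hbar$-adic completeness of $\hat A$ guarantee that the limit exists. This yields a $\mathbb{C}^{\times}$-equivariant $\mathbb{C}[[\hbar]]$-algebra embedding $\mathbf{A}_{\hbar}(V)^{\wedge_0}\hookrightarrow\hat A[[\hbar]]$.

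Third, set $\hat B_{\hbar}$ to be the super-centralizer of the image of $\mathbf{A}_{\hbar}(V)^{\wedge_0}$ in $\hat A[[\hbar]]$; it is a $\mathbb{C}^{\times}$-stable $\mathbb{C}[[\hbar]]$-subalgebra, $\hbar$-adically closed hence complete, and $\hbar$-torsion-free. The delicate point is that reduction modulo $\hbar$ identifies $\hat B_{\hbar}/\hbar\hat B_{\hbar}$ with the classical centralizer $\hat B$ of $S[[V]]$ in $\hat A$: the inclusion is clear, and surjectivity follows by lifting a classical centralizing element and killing, order by order in $\hbar$ using the same vanishing, its failure to supercommute with the $\tilde v_i$. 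Hence $\hat B_{\hbar}$ is a quantization of $\hat B$. Finally, the multiplication map $\Phi_{\hbar}\colon\mathbf{A}_{\hbar}(V)^{\wedge_0}\otimes_{\mathbb{C}[[\hbar]]}\hat B_{\hbar}\to\hat A[[\hbar]]$ is a well-defined $\mathbb{C}^{\times}$-equivariant $\mathbb{C}[[\hbar]]$-algebra homomorphism, since the two factors supercommute. Modulo $\hbar$ it becomes the classical esDW isomorphism of Theorem~\ref{EDWthm}; by flatness it is then an isomorphism modulo every power of $\hbar$, and since both sides are $\hbar$-adically complete it is an isomorphism.

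The geometric content is already absorbed into Theorem~\ref{EDWthm}, so the genuinely new work lies in the second and third steps: carrying out the $\hbar$-by-$\hbar$ corrections simultaneously, compatibly with the $\mathbb{C}^{\times}$-action and with the $M_{\chi}$-adic topology on $\hat A$, i.e.\ checking that the deformation obstructions for the super Weyl algebra vanish equivariantly and that the quantum centralizer is $\hbar$-flat with the correct classical limit. In the purely even, non-formal situation these are classical rigidity facts about the Weyl algebra; the super and equivariant refinements amount to tracking parity signs through the cocycle identities and establishing the weight estimates that force the successive approximations to stabilize. I expect this bookkeeping, together with the identification $\hat B_{\hbar}/\hbar\hat B_{\hbar}\cong\hat B$, to be the main obstacle.
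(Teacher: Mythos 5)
Your argument is sound, but it is not the route the paper takes. The paper's proof of Theorem~\ref{SQDWtm} runs an induction on $\dim V=(2n|m)$, splitting off one $\mathbb{C}^{\times}$-homogeneous symplectic pair of even generators (or one/two odd generators) at a time: at each step it writes down an \emph{explicit} closed-form correction, e.g.\ $g_1=g-(\sum_{i}(-1)^{i}g^{i}\ad^{i-1}(f)(a_2))\hbar^2$, to upgrade $[f,g]=\hbar^2+O(\hbar^3)$ to $[f,g]=\hbar^2$ exactly, and it proves bijectivity of the multiplication map directly via the explicit projector $a\mapsto a-\sum_{i,j}\frac{(-1)^{i+j}}{i!j!}f^jg^i\ad(f)^{i}\ad(g)^{j}(a)\hbar^{-2(i+j)}$ onto the centralizer, exactly mirroring Lemmas~\ref{non-super decom thm} and~\ref{super decom thm}. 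You instead quantize all of $V$ at once, kill the obstructions to the Weyl--Clifford relations by a cohomological vanishing (the formal super Poincar\'e lemma, resp.\ innerness of derivations for the centralizer-lifting step), and conclude by the ``flat, complete, isomorphism mod $\hbar$ $\Rightarrow$ isomorphism'' principle. Both are successive-approximation arguments, but yours trades the paper's explicit formulas for black-boxed rigidity statements; in the super, formal and equivariant setting those do require checking (super Poincar\'e lemma with coefficients in $\hat B$, homogeneity of the chosen primitives, convergence of $\sum a_\alpha\tilde v^{\alpha}$ in the mixed $M_\chi$-adic/$\hbar$-adic topology), whereas the paper sidesteps them by exhibiting the corrections. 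Conversely, your argument is shorter and avoids the case analysis. One practical caveat: the paper's later applications (notably Lemma~\ref{decom of qun unive le}) rely on knowing precisely where the corrections $\bar v_i-v_i$ live (in $\F_{d_i}(\hat A_{\hbar})\cap \ggg'^{2}\hat A_{\hbar}$), information that falls out of the explicit construction but would need to be re-extracted from your cohomological one; for the theorem itself, however, your proof suffices.
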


We  identify $S[[V]]$ with a Poisson subalgebra of $\hat{A}$ in the proof of Theorem \ref{EDWthm}. Under this identification  the isomorphism $\phi$ is
given by $\phi(a\otimes b)=ab$ for all $a,b \in \hat{A}$. The isomorphism $\Phi_{\hbar}$ is constructed in a similar way.

\subsection{ Finite $W$-superalgebra via quantum DW theorem}\label{sec introduction}

 For a complex reductive Lie algebra $\ggg$ and a nilpotent $e$ in it, one has the finite $W$-algebra $\mathcal{U}(\ggg, e)$, see \cite{Pr1} for the definition. There are several different but equivalent  definitions for the finite $W$-algebras. In \cite{Lo1}, the author constructed the finite $W$-algebra from the Fedosov quantization. Via this construction, Losev relates the representation theoretic objects of the $\mathcal{U}(\ggg)$ and $\mathcal{U}(\ggg,e)$.  Those objects include the prime (primitive) ideals, Harish-Chandra bimodules etc.   Using those correspondences, Losev  classifies  the finite-dimensional modules for classical $W$-algebra $\mathcal{U}(\ggg,e)$ (c.f. \cite{Lo1}).

 \subsubsection{} In \cite{WZ}, \cite{ZS1} the authors study the finite $W$-superalgebras. Let us recall the definition of finite $W$-superalgebras. Let $\mathfrak{g}=\gbz+\gbo$ be a basic Lie superalgebra with an \textit{even} non-degenerate super-symmetric invariant bilinear form (say Killing form ) $(\cdot,\cdot)$. Denote by $\mathcal{U}$ the universal enveloping algebra of $\ggg$.
Let $e \in \gbz$ be a nilpotent element and  $\chi \in \mathfrak{g}^*$ be the unique element with $\chi(X)=(e, X) \text{ for all } X \in \mathfrak{g}$. Since the Killing form is even, we have $\chi(x)=0$ for all $x \in \ggg_{\bar{1}}$.

Let $\ggg=\bigoplus_{i}\ggg(i)$ be a good $\mathbb{Z}$-grading for $e$, see \cite{Hoy} for the definition.
It follows from the property of Killing form that the super symplectic form $\langle \cdot,\cdot\rangle$ on $\ggg(-1)$ given by
$$\langle x, y\rangle=\chi([x,y]) \text{ for all $x,y \in \ggg(-1)$ }$$
is non-degenerate.

Let
$\ppp=\bigoplus_{i \geq 0}\ggg(i)$, choose a Lagrangian subspace (isotropic subspace of maximal dimension) $l$ of $\ggg(-1)$ and set $\mmm=\bigoplus_{i \leq -2}\ggg(i) \oplus l$. Let $\ggg' $ (\textit{resp}. $\mmm'$) be the subspace of $\mathcal{U} $  consisting of  $\{ x-\chi(x) \mid x \in \ggg \text{ (\textit{resp}. $\mmm$)} \}$. Since $\chi(x)=0$ for all $x \in \ggg_{\bar{1}}$, $\ggg'$ (\textit{resp}. $\mmm'$) is a $\mathbb{Z}_2$-graded vector space.

Denote by $I_{\chi}$ the left ideal of $\mathcal{U}$ generated 
by $\mmm'$.  Then the finite $W$-superalgebra $\mathcal{U}(\ggg,e)$ associate to the nilpotent element $e$ is defined (c.f. \cite[\S2.2.3]{ZS2}) by
\begin{align}\label{def: u(g,e)}
\mathcal{U}(\ggg,e):=(\mathcal{U}/I_{\chi})^{\ad\mmm}=\{ \overline{y} \in \mathcal{U}/I_{\chi} \mid (a-\chi(a))y \in I_{\chi} \text{ \quad for all $a \in \mmm$}\},
\end{align}
which is equal to $(\mathcal{U}/I_{\chi})^{I_\chi}$.
In Section \ref{sec realzation of W} (see Theorem \ref{realzation of W}), we prove that the finite $W$-superalgebra $\mathcal{U}(\ggg,e)$ is  isomorphic to the super quantum Darboux Weinstein slices $\mathscr{W}_{\chi}$ (which will often simply denoted by $\mathscr{W}$). This generalizes the non-super results from \cite{Lo1}, \cite{Los}. We use a similar idea of the proof as in  \textit{loc. cit.}. A difference in our approach is that we  use the explicit construction in the proof of Theorem \ref{SQDWtm} but not the theory of Fedosov quantization and invariant theory. As an application we
reobtain the PBW base theorem of finite W-superalgebras, which is a main result of \cite{ZS1}.

\subsubsection{}\label{sec: completion 0}
Let $\mathcal{U}^{\wedge}_{\mmm'}$ be the completion of $\mathcal{U}$ with respect to the fundamental system $\mathcal{U}\mmm'^{k}$  i.e. $\mathcal{U}^{\wedge}_{\mmm'}=\varprojlim \mathcal{U} /(\mathcal{U}\mmm'^{k})$.
It follows from the definition of the good $\mathbb{Z}$-grading that $\dim([\ggg,f]_{\bar{1}})$ is odd if and only if $\dim(\ggg(-1)_{\bar{1}})$ is
odd. In this case there exists $\Theta \in \ggg(-1)_{\bar{1}} $ with $\langle \Theta , \Theta \rangle=1$.
Let $\bar{\mmm}$ be a maximal isotropic subspace of  symplectic superspace $[\ggg,f]$  subject to $\langle \Theta , \bar{\mmm} \rangle =0$ whenever $\dim([\ggg,f]_{\bar{1}})$ is odd, and $V$ be the subspace $\bar{\mmm}^* \oplus \bar{\mmm}$.

Denote by $\mathbf{A}(V)_{\bar{\mmm}}^{\wedge}\otimes \mathscr{W}$ the completion of  $\mathbf{A}(V)\otimes \mathscr{W}$  with respect to  the fundamental system $\mathbf{A}(V)\otimes \mathscr{W}/(\mathbf{A}(V)\otimes \mathscr{W})\bar{\mmm}^{k}$. Using the realization introduced above we prove the following splitting theorem.

\begin{theorem}\label{realzn of w-alge thm}
There exists an isomorphism
$$ \Phi: \mathbf{A}(V)_{\bar{\mmm}}^{\wedge}\otimes \mathscr{W} \cong  \mathcal{U}^{\wedge}_{\mmm'}$$
of associative algebras.
\end{theorem}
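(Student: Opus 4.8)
The plan is to obtain $\Phi$ by transporting the explicit isomorphism $\Phi_{\hbar}$ of Theorem \ref{SQDWtm} to the non-formal, one-sidedly completed setting, using the realization of $\mathscr{W}$ supplied by Theorem \ref{realzation of W}. Just as with $\phi$ and $\Phi_{\hbar}$, the map $\Phi$ will ultimately be a multiplication map: I will exhibit $\mathbf{A}(V)_{\bar{\mmm}}^{\wedge}$ and $\mathscr{W}$ as mutually commuting subalgebras of $\mathcal{U}^{\wedge}_{\mmm'}$ and set $\Phi(a\otimes b)=ab$.

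First I would set up the deformation. Equip $\ggg$ with a Kazhdan grading for $e$ (placing $\ggg(i)$ in degree $i+2$) and let $(S[\ggg][[\hbar]],\ast)$ be the standard quantization of Example \ref{ex quan}(2), made $\mathbb{C}^{\times}$-equivariant in the sense of \S\ref{sec 1.1} via this grading (with $\hbar$ of weight $1$); it recovers $\mathcal{U}(\ggg)$ at $\hbar=1$. Since $\chi=(e,\cdot)$ is supported on $\ggg(-2)$, it is a fixed point of the induced $\mathbb{C}^{\times}$-action, so completing at $\chi$ gives a $\mathbb{C}^{\times}$-equivariant quantization $(\hat{A}[[\hbar]],\ast)$ of $\hat{A}=S[\ggg]^{\wedge_{\chi}}$. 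With $V=\bar{\mmm}^{*}\oplus\bar{\mmm}$ as in the statement, non-degeneracy of the Kirillov–Kostant form $\langle x,y\rangle=\chi([x,y])$ on $[\ggg,f]$ together with the choice of $\bar{\mmm}$ (and the $\Theta$-condition in the odd case, which puts $\mathbb{C}\Theta$ in the symplectic complement of $V$) makes the Poisson bi-vector non-degenerate on $V$, so Theorem \ref{SQDWtm} furnishes a $\mathbb{C}^{\times}$-equivariant $\mathbb{C}[[\hbar]]$-algebra isomorphism
$$\Phi_{\hbar}:\ \mathbf{A}_{\hbar}(V)^{\wedge_{0}}\otimes_{\mathbb{C}[[\hbar]]}\hat{B}_{\hbar}\ \longrightarrow\ \hat{A}[[\hbar]] .$$
By the definition of $\mathscr{W}_{\chi}$ together with Theorem \ref{realzation of W}, the $\mathbb{C}^{\times}$-finite part of $\hat{B}_{\hbar}$, specialized at $\hbar=1$, is $\mathscr{W}=\mathcal{U}(\ggg,e)$; this specialization is legitimate because the Kazhdan $\mathbb{C}^{\times}$-action makes $\hat{B}$ a union of finite-dimensional $\mathbb{C}^{\times}$-stable subspaces on which $\hbar$ is a unit after the degree shift.

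Next I would identify, inside the product picture, the one-sided topology defining $\mathcal{U}^{\wedge}_{\mmm'}$. Using that the $W$-superalgebra construction \eqref{def: u(g,e)} is independent of the auxiliary isotropic subspace up to isomorphism, I would choose the Lagrangian $l$ of $\ggg(-1)$ and the subspace $\bar{\mmm}$ compatibly, so that the homogenization of $\mmm'$ is carried by $\Phi_{\hbar}$ into the left ideal $\big(\mathbf{A}_{\hbar}(V)\,\bar{\mmm}\big)\otimes\hat{B}_{\hbar}$ and generates, after completion, exactly the $\bar{\mmm}$-adic topology on the first tensor factor while leaving $\hat{B}_{\hbar}$ untouched; this is a computation with the good grading and the explicit shape of $\Phi_{\hbar}$ (built, as for $\phi$, from multiplication in $\hat{A}$ and its quantization). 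It then remains to de-complete: passing to $\mathbb{C}^{\times}$-finite vectors and setting $\hbar=1$ re-expands the $\bar{\mmm}^{*}$-directions of the Weyl factor and the whole slice factor, turning $\mathbf{A}_{\hbar}(V)^{\wedge_{0}}$ completed along $\bar{\mmm}$ into $\mathbf{A}(V)_{\bar{\mmm}}^{\wedge}$ and $\hat{B}_{\hbar}$ into $\mathscr{W}$, and yields the asserted isomorphism $\Phi$.

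I expect the main obstacle to be exactly this last, bookkeeping-heavy step: reconciling the mixed $\hbar$-adic-plus-point-adic topology of the Fedosov-type algebra $\hat{A}[[\hbar]]$ with the one-sided $\mathcal{U}\mmm'^{k}$-adic topology, and proving that once the $\bar{\mmm}$-directions are isolated the slice factor genuinely emerges \emph{un-completed} as $\mathscr{W}$ — equivalently, that the comparison map of Theorem \ref{realzation of W} identifies $\mathscr{W}$ with the $\mathbb{C}^{\times}$-finite part of $\hat{B}_{\hbar}|_{\hbar=1}$ and survives the de-completion. Making the specialization $\hbar=1$ and the re-expansion rigorous relies on the $\mathbb{C}^{\times}$-finiteness afforded by the Kazhdan action, exactly as in the non-super arguments of \cite{Lo1} and \cite{Los}, and the parity subtlety (the leftover $\Theta$-direction of $[\ggg,f]$ absent from $V$) is precisely what the $\Theta$-condition in the definition of $\bar{\mmm}$ is there to control.
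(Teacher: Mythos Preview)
Your outline is essentially the paper's: apply Theorem \ref{SQDWtm} to $\hat{A}_{\hbar}=S[\ggg]^{\wedge_\chi}_{\hbar}$ and $V$, pass to $\mathbb{C}^{\times}$-finite vectors and specialize $\hbar=1$ to obtain an isomorphism $\Phi^{\heartsuit}$ between the Kazhdan-bounded-above subalgebras $\mathcal{A}_1^{\heartsuit}$ and $\mathcal{A}^{\heartsuit}$ (exactly as in the proof of Theorem \ref{realzation of W}), and then argue that the $\bar{\mmm}$-adic and $\mmm'$-adic filtrations correspond under $\Phi^{\heartsuit}$, so that completing yields $\Phi$.

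The one place where you are too optimistic is the topology comparison. You suggest that by ``choosing $l$ and $\bar{\mmm}$ compatibly'' the map $\Phi_{\hbar}$ will carry the homogenization of $\mmm'$ directly into the left ideal generated by $\bar{\mmm}$; no compatible choice makes this literal. The construction of $\Phi_{\hbar}$ in the proof of Theorem \ref{SQDWtm} perturbs each generator $v_i\in V$ to $\bar v_i=v_i+u_i$ with $u_i\in\ggg'^{2}\hat{A}_{\hbar}$ (Lemma \ref{decom of qun unive le}), so the image of $\mmm'$ under $(\Phi^{\heartsuit})^{-1}$ lands not in the ideal of $\bar{\mmm}$ but only in a space of the shape treated by Lemma \ref{equ of ideals}. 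The paper therefore interposes the auxiliary monomial filtration $\mathcal{I}^{\heartsuit}(k)$ of \S\ref{sec: 3.2}: Lemmas \ref{equ of ideals} and \ref{decom of qun unive le} give $\Phi^{\heartsuit}(\mathcal{I}_1^{\heartsuit}(1))=\mathcal{I}^{\heartsuit}(1)$, and Lemma \ref{compatible lemma} shows that on $\mathcal{U}$ the systems $\mathcal{I}(k)$ and $\mathcal{U}\mmm'^{k}$ are cofinal. This is the actual content of the proof of Theorem \ref{realzn of w-alge thm}; once it is in place, extending $\Phi^{\heartsuit}$ to the completions is immediate. Your diagnosis of the obstacle is correct, but it is resolved not by a clever choice of $l$ and $\bar{\mmm}$ but by the three comparison lemmas of \S\ref{sec: 3.2} together with Lemma \ref{decom of qun unive le}.
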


\subsubsection{} As applications of the above splitting theorem,  we firstly give an alternative proof of the super Skryabin's equivalence (c.f. Theorem \ref{Skryabin's equivalence}) which was stated in \cite{ZS1}.

Secondly,  we establish a relation between finite $W$-algebras and $W$-superalgebras. For a Lie superalgebra $\ggg=\ggg_{\bar{0}}\oplus \ggg_{\bar{1}}$, there is a natural  embedding $\mathcal{U}(\ggg_{\bar{0}}) \hookrightarrow \mathcal{U}(\ggg)$ of the universal enveloping algebras. Almost all the results on representations of Lie superalgebras rely on this relation.  It is very useful  to find a similar story for finite $W$-superalgebras. Unfortunately, there is no canonical embedding of $\mathcal{U}(\ggg_{\bar{0}}, e)$ into $\mathcal{U}(\ggg,e)$ for general nilpotent $e$. What is  a good remedy, in
\S\ref{rel of W's} we show that the $W$-algebra $\mathcal{U}(\ggg_{\bar{0}}, e)$ can be  embedded into a larger  associative algebra $\mathcal{A}_{\ddag}=\mathrm{Cl}(V) \otimes \mathcal{U}(\ggg,e)$ (see Theorem \ref{thm rel of W's}). This enables $\mathcal{U}(\ggg_{\bar{0}}, e)$ to play a role in the representation theory of $\mathcal{U}(\ggg,e)$ as $\ggg_{\bar{0}}$ does in the representation theory of $\ggg$ (this will appear in \cite{SXZ}).  For an immediate consequence of this fact, see Corollary \ref{cor 3.12}.
 Finally we  point out that the associative algebra $\mathcal{A}_{\ddag}$ should coincide with  $\mathcal{A}_{\dag}$ in  \cite[\S6.3.2]{Lo4}. Thus Theorem \ref{thm rel of W's} strengthens the claim from \textit{loc. cit.} saying that $\mathcal{A}_{\dag}$ is a Clifford algebra over $\mathcal{U}(\ggg,e)$.

\subsection{Finite-dimensional representations of finite $W$-algebras}
The relation introduced above will give us a powerful tool to investigate representations of  finite $W$-superalgebras. In the present work, we give some information on the set of finite-dimensional irreducible $\mathcal{U}(\ggg,e)$-modules by relating two sided ideals of  $\mathcal{U}(\ggg,e)$ and $\mathcal{U}$. Denote by $\mathfrak{id}(\mathcal{A})$ the set of two sided ideals of an associative algebra $\mathcal{A}$. Following \cite{Lo1} in the non-super case,  we construct a map $ \bullet^{\dagger}: \mathfrak{id}(\mathcal{U}) \rightarrow \mathfrak{id}(\mathcal{U}(\ggg,e))$ and $\bullet_{\dagger}:\mathfrak{id}(\mathcal{U}) \rightarrow \mathfrak{id}(\mathcal{U}(\ggg,e))$. In the case of semisimple Lie algebra, those correspondences were used to prove the  existence of finite-/one- dimensional $\mathcal{U}(\ggg,e)$-modules.

In the case when $e$ is a regular nilpotent element in general linear Lie superalgebra  or is a minimal nilpotent element in $\ggg_{\bar{0}}$ for arbitrary basic  Lie superalgebra $\ggg$, the finite-dimensional representations of $\mathcal{U}(\ggg, e)$ were studied in \cite{BBG}, \cite{BG} and \cite{ZS2}. In \cite{BBG}, the authors proved that irreducible modules over the principal finite $W$-superalgebra $\mathcal{U}(\ggg, e)$  are finite dimensional and classified them by highest weight theory using the triangular decomposition of $\mathcal{U}(\ggg, e)$.
In \cite{PS1} the authors considered the finite $W$-algebra $\mathcal{U}(\ggg, e)$  for a basic Lie superalgebra and the queer Lie superalgebra $Q(n)$ associted with regular even nilpotent coadjoint orbits and proved that all irreducible representations of $\mathcal{U}(\ggg, e)$  are finite dimensional. They classified irreducible $\mathcal{U}(\ggg, e)$-modules for $Q(n)$ in \cite{PS3}. For general W-superalgebras of type $Q(n)$, see \cite{PS2}.
All the representations in \textit{loc.cit} were constructed somehow from an explicit presentation of $\mathcal{U}(\ggg, e)$.
In the present work we study finite-dimensional representations of $\mathcal{U}(\ggg, e)$ by the above mentioned conceptional approach. In the last section we construct a series of finite-dimensional modules via the above correspondences (see Theorem \ref{thm5.3}).  On the other hand,  we show that for any primitive ideal $\mathcal{J} \subset \mathcal{U} $ supported on $\bar{\mathcal{O}}$,  the pre-image of $\mathcal{J}$ under the map $\bullet_\dag$ is finite. Recall that Lezcter establishes a bijection between primitive ideals of $\mathcal{U}(\ggg_{\bar{0}})$ and $\mathcal{U}$ (c.f. \cite{Le}).  For further information about the later, see \cite{CM}.  Thus we make a step forward to a classification of the finite-dimensional irreducible modules over $\mathcal{U}(\ggg, e)$.

\section*{Acknowledgement}
We thank Losev for the suggestion of realizing finite $W$-superalgebra via DW theorem.  A part of the work was accomplished during the second author's visit to Northeastern University, he is grateful to Losev for stimulating discussions and hospitality. We thank Yang Zeng for helpful
discussions. We would like to express our gratitude to referee for the comments. B.S. is partially supported by the NSF of China (Nos.11671138,11771279) and Shanghai Key Laboratory of PMMP (No. 13dz2260400).
 H.X. is  partially supported by the NSF of China (No.11801113), Fundamental Research Funds for the Central Universities (Nos.GK2240260002,GK211026024).

\section{Proofs of theorem \ref{EDWthm} and \ref{SQDWtm}} \label{secSDTHMS}

Let us recall the notion of Poisson bi-vector at first. Especially, $M_\chi$ denotes the maximal ideal of $S[\vvv]$ corresponding to $\chi$,  this is to say, $M_\chi$ is the ideal generated by $x-\chi(x)$ for $x\in \vvv$.
Identify  the  cotangent space of the formal super scheme $\Spec(\hat{A})$ at the close point $\chi$ with the super space $\vvv$.  For a Poisson bracket $\{\cdot,\cdot\}$ on $\hat{A}$, the \textit{ Poisson bi-vector} $\Pi \in \bigwedge^2 \vvv$ associated to it is  given by
\begin{equation}\label{bivectr eq}
\Pi(\text{d}f,\text{d}g):= \{f,g\}+M_\chi \in \mathbb{C}.
\end{equation}
If $\Pi$ is non-degenerate, we say that the Poisson bracket $\{  \cdot , \cdot \}$ is symplectic.

Assume that $\hat{A}$ is equipped with an even continuous Poisson bracket $\{ \cdot , \cdot \}$.
\begin{lemma}\label{non-super decom thm}
\begin{itemize}

\item[(\rmnum{1})]Suppose that $f,g \in \hat{A}{M}_\chi$ are even elements with $\{f,g\}=1+t$  for some $ t \in M_\chi$.
Then there exists $g' \in  \hat{A}{M}_\chi^{2} $ such that $\{f,g+g'\}=1$.

\item[(\rmnum{2})]
Suppose that $f,g \in \hat{A}{M}_\chi$ are even elements with $\{f,g\}=1$. Denote by $V_1$ the symplectic vector space spanned by $f$ and $g$ with symplectic form $\omega_1(f,g)=1$. Then we have the following isomorphism of Poisson algebras
$$\phi_1: S[[V_1]] \otimes \hat{B}_1 \rightarrow \hat{A} ; \quad  a \otimes b \mapsto ab,$$
where $\hat{B}_1=\ker(\ad_f) \cap \ker(\ad_g)$.
\end{itemize}
\end{lemma}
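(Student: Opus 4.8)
\emph{Setup and Part (i).} The plan is to derive both parts from a \emph{rectification} statement for the Hamiltonian derivation $\mathrm{ad}_f:=\{f,\cdot\}$, obtained by successive approximation. Write $\mathfrak m:=\hat{A}M_\chi$ for the completed maximal ideal. Since $\{f,g\}=1+t$ with $t\in\mathfrak m$ is a unit, neither $f$ nor $g$ lies in $\mathfrak m^2$ (otherwise the corresponding Hamiltonian derivation would preserve $\mathfrak m$ and so could not send an element of $\mathfrak m$ to a unit); in particular the even derivation $\mathrm{ad}_f$ has a nonzero constant-coefficient part $X_0$. Choosing formal even coordinates $z_1,\dots,z_m$ and odd coordinates $\zeta_1,\dots,\zeta_p$ on $\vvv$ with $X_0=\partial_{z_1}$, we get $\mathrm{ad}_f=\partial_{z_1}+D'$ where $D'$ is a continuous even derivation all of whose coefficients lie in $\mathfrak m$; hence $D'(\mathfrak m^j)\subseteq\mathfrak m^j$, while the formal antiderivative $I(h):=\int_0^{z_1}h\,dz_1$ satisfies $\partial_{z_1}\circ I=\mathrm{id}$ and $I(\mathfrak m^j)\subseteq\mathfrak m^{j+1}$. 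For Part (i) I would then solve $\mathrm{ad}_f(g')=-t$ with $g'\in\mathfrak m^2$ iteratively: put $g_0':=-I(t)\in\mathfrak m^2$, so $\mathrm{ad}_f(g+g_0')=1+t_1$ with $t_1=D'(g_0')\in\mathfrak m^2$; inductively, if $\mathrm{ad}_f(g+g_0'+\dots+g_k')=1+t_{k+1}$ with $t_{k+1}\in\mathfrak m^{k+2}$, set $g_{k+1}':=-I(t_{k+1})\in\mathfrak m^{k+3}$, so that the new remainder $t_{k+2}=D'(g_{k+1}')\in\mathfrak m^{k+3}$. Then $g':=\sum_{k\ge 0}g_k'$ converges inside $\mathfrak m^2$ and $\{f,g+g'\}=1$.

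\emph{Part (ii).} The crucial structural point is that $\mathrm{ad}_f$ and $\mathrm{ad}_g$ commute, since $[\mathrm{ad}_f,\mathrm{ad}_g]=\mathrm{ad}_{\{f,g\}}=\mathrm{ad}_1=0$. I would first prove a sub-lemma: \emph{if $D$ is a continuous even derivation of a formal power series superalgebra $\hat C$ with maximal ideal $\mathfrak m$, and $g_0\in\mathfrak m$ satisfies $D(g_0)=1$, then $g_0\notin\mathfrak m^2$, so it extends to a coordinate system; after a linear change of the remaining coordinates one may write $D=\partial_{g_0}+D'$ with $D'$ having coefficients in $\mathfrak m$ and $D'(g_0)=0$; and then, straightening the remaining coordinates by the very same successive approximation as in Part (i) (now solving $D(z_i')=0$ and $D(\zeta_\alpha')=0$ with $z_i'-z_i,\ \zeta_\alpha'-\zeta_\alpha\in\mathfrak m^2$), one has $D=\partial_{g_0}$; consequently $\ker D$ is again a formal power series superalgebra and multiplication is an isomorphism $\ker(D)\,\widehat{\otimes}\,\mathbb C[[g_0]]\xrightarrow{\ \sim\ }\hat C$.} Applying this to $(\hat A,\mathrm{ad}_f,g)$ gives $\hat A\cong\ker(\mathrm{ad}_f)\,\widehat{\otimes}\,\mathbb C[[g]]$. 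Since $\ker(\mathrm{ad}_f)$ is $\mathrm{ad}_g$-stable (the two derivations commute), is itself a formal power series superalgebra, and contains $f$ in its maximal ideal with $(-\mathrm{ad}_g)(f)=\{f,g\}=1$, applying the sub-lemma again gives $\ker(\mathrm{ad}_f)\cong\hat B_1\,\widehat{\otimes}\,\mathbb C[[f]]$, where $\hat B_1=\ker(\mathrm{ad}_f)\cap\ker(\mathrm{ad}_g)$. Composing, multiplication yields an isomorphism $S[[V_1]]\otimes\hat B_1\cong\hat A$, which is exactly $\phi_1:a\otimes b\mapsto ab$. It remains to check Poisson-compatibility: $S[[V_1]]=\mathbb C[[f,g]]$ sits in $\hat A$ as a Poisson subalgebra with bracket dictated by $\{f,g\}=1=\omega_1(f,g)$; $\hat B_1$ is a Poisson subalgebra because $\ker(\mathrm{ad}_f)$ and $\ker(\mathrm{ad}_g)$ are Poisson-closed (Jacobi); and $\{b,f\}=\{b,g\}=0$ for $b\in\hat B_1$, so these two subalgebras Poisson-commute and, by the super Leibniz rule, $\phi_1$ intertwines the tensor-product Poisson bracket with $\{\cdot,\cdot\}$ on $\hat A$, the Koszul signs matching automatically.

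\emph{Expected main obstacle.} I expect the bulk of the work to be the rectification sub-lemma: verifying that at each step the correction genuinely lands in a strictly higher power of $\mathfrak m$ (so that all the formal series converge in the $\mathfrak m$-adic topology), that the distinguished coordinate $g_0$ is never disturbed, and that $\ker(\mathrm{ad}_f)$ really inherits the structure of a formal power series superalgebra of the required type so that the construction can be iterated. Once that is set up, Part (i) is literally the first step of this iteration, and the Poisson statement in Part (ii) is purely formal.
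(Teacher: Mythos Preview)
Your proposal is correct, but it takes a genuinely different route from the paper. You rectify the derivation $\ad_f$ in auxiliary formal coordinates (using a formal antiderivative $I$ as a right inverse to $\partial_{z_1}$) and then invoke a straightening sub-lemma twice. The paper instead works entirely intrinsically with $f$ and $g$: for Part~(i) it uses $g$ itself as the ``integration variable,'' setting, when $\deg(t)=r$,
\[
g_1 \;=\; g+\sum_{i=1}^{r+1}(-1)^i\tfrac{1}{i!}\,g^i\,\ad_f^{\,i-1}(t),
\]
and a direct telescoping computation gives $\{f,g_1\}=1+t_1$ with $\deg(t_1)>r$; for Part~(ii) it writes down an explicit projector
\[
a\;\longmapsto\; a-\sum_{i,j\ge 0}(-1)^{i+j}\tfrac{1}{i!j!}\,g^if^j\,\ad_f^{\,i}\ad_g^{\,j}(a)\in\hat B_1,
\]
which yields surjectivity at once, while injectivity follows by hitting a relation $\sum g^if^j a_{i,j}=0$ with $\ad_f^k\ad_g^l$. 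Your approach is conceptually clean and reduces everything to the trivial case $D=\partial_{g_0}$; the paper's approach is coordinate-free and, crucially for what comes next, the \emph{same formulas} transfer verbatim to the $\mathbb{C}^\times$-equivariant statement (Theorem~\ref{EDWthm}) and to the quantum version (Theorem~\ref{SQDWtm}), where one simply replaces $\ad_f$ by the $\ast$-commutator and $g^if^j$ by $\ast$-powers. Your formal antiderivative $I$ has no evident analogue in the quantum algebra, and tracking $\mathbb{C}^\times$-homogeneity through a coordinate change is extra bookkeeping, so if you intend to push on to those results you would eventually want something closer to the paper's intrinsic formulas.
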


Before giving the proof we clarify the notations of $\deg(a)$, $\ad_{a}$ and $\ad(a)$.
Fix a $\mathbb{Z}_2$-homogenous basis  $x_1,x_2, \dots, x_{N}$  of $\vvv'=\{ x-\chi(x) \mid  x \in \vvv \} \subset S(\vvv)$.
Recall that $A=S[\vvv]$, $\hat{A}=S[\vvv_{\bar{0}}]^{\wedge_{\chi}}\otimes \bigwedge (\vvv_{\bar{1}})$. Let $M_{\chi}$ be the  maximal ideal of $\hat{A}$ generated by $x_1,x_2, \dots, x_{N}$. For a super monomial  $x_1^{i_1}\cdots x_{N}^{i_{N}} \in A $, its order is defined by $\sum_{k=1}^{N} i_k$.
For $a \in \hat{A}$,  write $a=\sum_{i}^{\infty} a_i$  where $a_i$ is the sum of the monomials with order $i$. The \textit{degree} $\deg(a)$ of $a$ is defined to be the minimal $i$ such that $a_i \neq 0$. For any  $a \in A$, by $\ad_{a}$ we mean the adjoint operator
$\bullet \mapsto \{a, \bullet\}$ on $A$; for an associative algebra $(\mathcal{A},\circ)$ and $ a \in \mathcal{A}$, we denote by $\ad(a)$ the super commutator operator $\bullet \mapsto [a,\bullet]$.

Now we are ready to prove Lemma \ref{non-super decom thm}.
\begin{proof}
(\rmnum{1}) Suppose $\deg(t)=r$. By the notation $o(t)$, we denote an element with higher degree than $t$.
Set
$$g_1= g+\sum_{i=1}^{r+1} (-1)^i \frac{1}{i!}g^i \ad^{i-1}_f(t).$$
Since
$$\{f ,g^i \ad^{i-1}_f(t) \}=ig^{i-1} \ad^{i-1}_f(t)+ g^i \ad^{i}_f(t)+ o(t)  \text{ for $i=1,2,3, \dots, r$,} $$
we have  $\{f , g_1\}=1+t_1$  for some $t_1$ with $\deg(t_1)>r$.
By the same procedure we obtain
$g_k$, $t_k$  with $\deg (g_k) >\deg (g_{k-1})$, $\deg (t_k) >\deg (t_{k-1})$ and
$\{f, g_{k}\}=1+t_{k}$
for all $k=2,3, 4,\ldots $.
Since $\deg(g^i \ad^{i}_f(t)) \geq \deg(t)$, the series $\{g_{k}\}_{k=1}^{k=\infty}$ converges in $\hat{A}$.
Set $g'=\lim_{k\rightarrow \infty} g_k-g$. It is easy to check that $\deg(g') \geq 2$ and $\{f,g+g'\}=1$. Thus (\rmnum{1}) follows.

(ii) For any $a \in \hat{A}$ observe that
$$a-\sum_{0 \leq i+j}^{\infty}(-1)^{i+j} \frac{1}{i!j!}g^i f^j\ad^{i}_f \circ \ad^{j}_g(a) \in \hat{B}_1.$$
Making the same observation as that for  all $\ad^{i}_f \circ \ad^{j}_g(a)$s in the above, we can see that  $\phi_1$ is surjective.
For $\sum_{i,j\geq 1}g^i f^j \otimes a_{i,j} \in \ker(\phi_1)$ i.e.
$$\sum_{i,j\geq 0}g^i f^j \cdot a_{i,j}=0$$
acting on both sides by  $\ad_f^k \circ \ad_g^l$, we have $a_{l,k}=0$. So $\phi_1$ is also injective.
\end{proof}

The following is an analogue of Lemma \ref{non-super decom thm}  for odd elements.
\begin{lemma}\label{super decom thm}
\begin{itemize}
\item[(\rmnum{1})] Suppose that there are odd elements $f,g \in \hat{A}M_{\chi}$ with $\{ f,g\}=1+t$ for some $t \in M_{\chi}$.  Then there exists an odd  $h \in \hat{A}$ such that $\{ h, h \}=1$.
\item[(\rmnum{2})] For an odd $h \in \hat{A}$  with $\{h,h\}=1$, we have  following isomorphism
$$ \phi:  \mathbb{C}h \otimes \hat{B}_{1} \longrightarrow \hat{A} ; \quad  a\otimes b \mapsto ab,$$
of Poisson algebras. Here $\hat{B}_{1}=\ker \ad_h$.
\end{itemize}
\end{lemma}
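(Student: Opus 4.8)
The plan is to mimic the structure of the proof of Lemma \ref{non-super decom thm}, replacing the pair of commuting "$p,q$" coordinates by a single odd square root $h$ of the symplectic form. For part (\rmnum{1}), I would first normalize: starting from $\{f,g\}=1+t$ with $t\in M_\chi$, I would look for an odd combination of the form $h=\alpha f+\beta g+(\text{higher order})$ satisfying $\{h,h\}=1$. The key algebraic observation is that for odd $h$ the bracket $\{h,h\}$ plays the role that $\{f,g\}$ played before — it is the quadratic form attached to the odd symplectic structure — and since $\{f,f\}$ and $\{g,g\}$ are themselves even elements of $\hat A M_\chi$ (they need not vanish), one must absorb them. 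Concretely I would first replace $g$ by $g'=g-\tfrac12\{g,g\}f$ (using $\{f,g\}\equiv 1$) to arrange $\{g',g'\}\in M_\chi$ up to the relevant order, similarly adjust $f$, and then set $h=\tfrac{1}{\sqrt 2}(f+g)$ as a first approximation, so that $\{h,h\}=\tfrac12(\{f,f\}+2\{f,g\}+\{g,g\})=1+t_1$ with $\deg(t_1)\ge 1$. Then I would run the same successive-approximation scheme as in Lemma \ref{non-super decom thm}(\rmnum{1}): given $h_k$ with $\{h_k,h_k\}=1+t_k$, correct it by $h_{k+1}=h_k-\tfrac12\,t_k\,h_k$ (or an analogous odd correction term of strictly higher degree built from $t_k$ and $h_k$), using the Leibniz rule and the super-Jacobi identity to check $\deg(t_{k+1})>\deg(t_k)$; since degrees strictly increase the sequence converges in $\hat A$ to the desired $h$.

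For part (\rmnum{2}), given odd $h$ with $\{h,h\}=1$, I would imitate Lemma \ref{non-super decom thm}(\rmnum{2}). Note $\ad_h$ is an odd derivation of $\hat A$ with $\ad_h^2=\tfrac12\ad_{\{h,h\}}=0$ (by the super-Jacobi identity $\{h,\{h,\cdot\}\}=\tfrac12\{\{h,h\},\cdot\}$), so $\ad_h$ is a square-zero odd operator. The projection onto $\hat B_1=\ker\ad_h$ is then $a\mapsto a-h\,\ad_h(a)$: indeed one computes $\ad_h\bigl(a-h\,\ad_h(a)\bigr)=\ad_h(a)-\{h,h\}\ad_h(a)+h\,\ad_h^2(a)=\ad_h(a)-\ad_h(a)=0$, using $\ad_h(h\,x)=\{h,h\}x-h\,\ad_h(x)=x-h\,\ad_h(x)$ for any $x$. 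This shows $\phi$ is surjective since $a=(a-h\,\ad_h(a))+h\cdot\ad_h(a)$ and $\ad_h(a)\in\hat B_1$ (as $\ad_h^2=0$). For injectivity, if $b_0+h\,b_1=0$ with $b_0,b_1\in\hat B_1$, applying $\ad_h$ gives $\ad_h(h\,b_1)=b_1=0$, hence $b_0=0$. Finally one checks $\phi$ is a Poisson homomorphism: $\hat B_1$ is a Poisson subalgebra because $\ker\ad_h$ is closed under $\{\cdot,\cdot\}$ by the super-Jacobi identity, the bracket between $h$ and $\hat B_1$ vanishes by definition, and $\{h,h\}=1$ is central, so the bracket on $\mathbb C h\otimes\hat B_1$ matches the one transported from $\hat A$.

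I expect the main obstacle to be part (\rmnum{1}), specifically the bookkeeping in the normalization step that makes $\{f,f\},\{g,g\}$ negligible and the verification that the odd correction terms $h_{k+1}-h_k$ indeed have strictly increasing degree and that $t_{k+1}$ is genuinely of higher order — the sign conventions for the super-bracket of odd elements with themselves (it is symmetric, not antisymmetric) must be handled carefully, and one must confirm that squares like $h^2=\tfrac12\{h,h\}$ interact correctly with the Leibniz rule when $h$ is odd (so $h^2=0$ in $\hat A$ as an element of the exterior part, yet $\{h,h\}=1\neq 0$ as a bracket — these are consistent but the distinction is the crux). Everything else is a routine transcription of the even case with $\ad_h^2=0$ replacing the pair of commuting Hamiltonian flows.
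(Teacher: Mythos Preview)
Your proof of (\rmnum{2}) is correct and essentially identical to the paper's: your decomposition $a=(a-h\,\ad_h(a))+h\cdot\ad_h(a)$ is exactly the paper's identity $f=\{h,hf\}+h\{h,f\}$ rewritten, and the injectivity argument matches verbatim.

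For (\rmnum{1}) your iterative scheme would work, but the paper does it in one stroke and avoids your preliminary normalizations of $\{f,f\}$ and $\{g,g\}$ entirely. The paper simply observes that at least one of $\{f,f\}$, $\{g,g\}$, $\{f+g,f+g\}$ is a unit in $\hat A$ (if the first two lie in $M_\chi$ then the third has constant term $2$, since $\{f+g,f+g\}=\{f,f\}+2\{f,g\}+\{g,g\}$). Writing that unit as $1+t_1$ and the corresponding odd element as $h_0$, one sets $h=(1+t_1)^{-1/2}h_0$ and is done. The reason this one-shot rescaling succeeds is the identity $\{h_0,\{h_0,h_0\}\}=0$ (super-Jacobi applied to three copies of an odd element), which forces the even factor $u=(1+t_1)^{-1/2}$ to Poisson-commute with $h_0$; then $\{uh_0,uh_0\}=u^2\{h_0,h_0\}=1$ with no cross terms. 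Your iteration $h_{k+1}=(1-\tfrac12 t_k)h_k$ is precisely Newton's method converging to this same square root, and the fact you would need in order to verify $\deg(t_{k+1})>\deg(t_k)$ is again $\{h_k,t_k\}=0$, i.e.\ the same Jacobi identity. So your route is not wrong, just a detour; what it buys is nothing beyond the direct argument, while the paper's version makes transparent that no successive approximation is needed once $\{h_0,h_0\}$ is invertible.

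One small slip to fix: you write ``squares like $h^2=\tfrac12\{h,h\}$''. No such relation holds in the supercommutative Poisson algebra $\hat A$; there $h^2=0$ for odd $h$, while $\{h,h\}$ is an entirely separate quantity. (The formula you wrote is a Clifford-algebra relation in the quantization, not a classical one.) Your subsequent parenthetical shows you understand the distinction, but the equation as stated is false and should be deleted.
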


\begin{proof}

(\rmnum{1}) If either $\{f, f \}$ or $\{g, g\}$, say $\{f, f \}$ is invertible.
We write $\{f, f \}=1+t_1$ for some $t_1 \in M_\chi$ and set $h=\frac{1}{\sqrt{1+t_1}}f$ ( here we interpret $\sqrt{1 + \bullet}$  as its Taylor expansion about $\bullet$ in $\hat{A}$ ). In the other case (namely $\{f, f \}$ and $\{g, g \} \in M_\chi$), we write $\{f+g, f+g \}=1+t_1$ for some  $t_1 \in M_\chi$ and set $h=\frac{1}{\sqrt{1+t_1}}(f+g)$. In both cases, we have  $\{h , h\}=1$.

For (\rmnum{2}) we only need to prove that the homomorphism $\phi$ is bijective.
Suppose  $f_1 +f_2h=0$  where $f_1,f_2 \in \hat{B}_{1}$.
Acting on both sides by $\ad_h$ we have $f_2=0$ and hence $f_1=0$. So the homomorphism $ \phi$ is injective.
On the other hand we have
$$\{h,hf\}=f- h\{h,f\} \mbox{ for all $f \in \hat{A}$}.$$

It follows form the definition  of even Poisson bracket that  $\{h,hf\}, \{h,f\} \in \ker(\ad_h)$. So $\phi$ is surjective.
\end{proof}

Now we ready to give

\noindent\textit{ Proof of  Theorem \ref{EDWthm}}.
We process by induction on $\dim(V)=(2n| m)$. Suppose that $m=0$. Fix a $\mathbb{C}^\times$-homogenous basis  $x_1,x_2, \dots, x_{N}$  of $\vvv'$.  Since the $\mathbb{C}^{\times}$-action is even, those basis are also $\mathbb{Z}_2$-homogenous.
By the definition of  Poisson bi-vector, there exist even $x_{i},x_{j}$ with $\{ x_{i},x_{j}\}=1+t$ for some $ t \in M_\chi$.
Thus we have already find even homogenous $f,g \in \hat{A}M_\chi$ with $\{f,g \}=1+t$ for some $ t \in M_\chi$.
We can  do $\mathbb{C}^{\times}$-homogenously  in the proof of Lemma \ref{non-super decom thm} and obtain new $\mathbb{C}^{\times}$-homogenous even elements $f,g$ with $\{f,g \}=1$. By Lemma \ref{non-super decom thm}, we have the following $\mathbb{C}^{\times}$-equivariant isomorphism of Poisson algebras
$$\phi_1: S[[V_1]] \otimes \hat{B}_1 \rightarrow \hat{A} ;  a \otimes b \mapsto ab. $$
Thus we can complete the proof in this case by induction on $\dim(V)$.

Suppose that the theorem holds for all $p< m$.  We may find odd $\mathbb{C}^{\times}$-homogenous $ f,g \in \hat{A}M_{\chi}$ such that $\{f, g\}=1+t$ with $t \in M_\chi$.

\textbf{Case 1:} Either $\{f, f \}$ or $\{g, g\}$ is invertible.\\
By the same argument as in case 1 of Lemma \ref{super decom thm} we may find a $\mathbb{C}^{\times}$-homogenous odd $h$ with $\{h, h\}=1$. Thus by Lemma \ref{super decom thm} (\rmnum{2}), we have the following $\mathbb{C}^{\times}$-equivariant isomorphism of Poisson algebras
$$ \phi_1 : \mathbb{C}h  \otimes  \hat{B}_{1} \longrightarrow \hat{A} ; \quad  a\otimes b \mapsto ab.$$
Now the theorem follows by induction.

\textbf{Case 2:}$\{f, f \},\{g, g \} \in M_\chi$.\\
We write
$$\{f,g \}=1+t,\{s_1,s_1 \}=t_1,\{g,g\}=t_2 \text{\quad with $t, t_1, t_2 \in M_\chi$ }. $$
Set $f_1=f-gt_1/2$. Then we have
$$\{f_1,g_1\}=1+t^{(1)},\{f_1,f_1\}=1+t_1^{(1)}, \{g,g\} =1+t_2\text{\quad with $t^{(1)}, t_1^{(1)}, t_2 \in M_\chi$}. $$
It is straightforward to check that  $\text{deg}(t_1^{(1)})> \text{deg}(t_1)$. Construct $ f_k,t_1^{(k)}$ in the same way  for  $k=2,3,4, \ldots $. Take $f_{\infty}=\lim_{k\rightarrow\infty} f_{k}$. We do the same procedure to the initial pair $(f_{\infty} ,g)$ and set $g_{\infty}=\lim_{k\rightarrow\infty} g_{k}$.
Now replace the $f$ and $g$ by $f_{\infty}$ and $g_{\infty}$ respectively. The new odd elements $f,g$ are $\mathbb{C}^{\times}$-homogenous and  satisfy
$$\{f,g \}=1+q,\{f,f\}=0,\{g, g \}=0 \text{\quad with $q \in M_\chi$}.$$
Replacing $f$ (\textit{resp.} $g$) by $\frac{f}{\sqrt{1+q}}$ (\textit{resp.} $\frac{g}{\sqrt{1+q}}$) for $i=1,2$, we may assume that  $q=0$ in the above equality. From the construction, we see that $f$ and $g$ are $\mathbb{C}^{\times}$-homogenous. For $h^{+}=f+g, h^{-}=f-g$, we have $\{ h^{+},h^{+} \} =1, \{ h^{-},h^{-} \}=1,\{ h^{-},h^{+} \}=0$. Using Lemma \ref{super decom thm} twice, we have  $\mathbb{C}^{\times}$-equivariant isomorphism
$$ \phi : \hat{B}_{1}\otimes \bigwedge (V_1) \longrightarrow \hat{A} ; \quad  a\otimes b \mapsto ab$$
of Poisson algebras. Here  $V_1=\mathbb{C}\langle h^{+},h^{-}\rangle =\mathbb{C}\langle f,g \rangle$ and $\hat{B}_{1}=\ker(\ad_f)\cap \ker(\ad_g)$. Now we can complete the proof  by induction. $\hfill\Box$

\begin{remark}
Specially if $V = \mathcal{T}_\chi^{*}(\spec(A)) (= \vvv)$, the theorem is called formal Darboux theorem.
Our proof  still valid in case of smooth super manifolds and is more direct and simpler than the one outlined in \cite{Kos}.
\end{remark}

\textit{Proof of Theorem \ref{SQDWtm}.}

We prove the theorem by induction on $m$. If $m=0$, choose  $\mathbb{C}^{\times}$-homogenous even elements $f,g \in M_{\chi}$ such that $\{f,g\}=1$. So we may write
$$[f,g]=\hbar^{2}+a_2\hbar^3+ o(\hbar^3)$$
Where $a_2 \in \hat{A}$, $o(\hbar^3)$ means some element in $\hat{A}[[\hbar]]\hbar^4$ and $[f,g]$ is by definition the super commutator $f\ast g -(-1)^{|f||g|}g\ast f$ for $f,g \in \hat{A}.$
Set
$$g_1=g-(\sum_{i=1}^{\infty}(-1)^{i}g^{i}\ad^{i-1}(f)(a_2))\hbar^2.$$
We have
$$[f,g_1]=\hbar^2+a_3\hbar^4+ o(\hbar^4).$$

Define $ g_k $  iteratively  for $k=2,3, \dots$. Replacing $g$ by $\LIM g_k$, we have$[f,g]=\hbar^2$.
By the similar argument in the proof of Lemma \ref{non-super decom thm} (\rmnum{2}), we have the  following isomorphism of
quantum algebras
\begin{equation}
\Phi_{1,\hbar}: \mathbf{A}_{\hbar}^{\wedge_{0}}(V_1) \otimes \hat{B}_{1,\hbar} \rightarrow \hat{A}[[\hbar]];  a \otimes b \mapsto ab.
\end{equation}
where $V_1=\mathbb{C}\langle f,g \rangle$ and $\hat{B}_{1,\hbar}=\ker(\ad(f)) \cap \ker(\ad(g))$.

For any $a \in \hat{A}[[\hbar]]$ we have
\begin{equation}\label{eq quan decom}
a-\sum_{i,j}(-1)^{i+j}\frac{1}{i!j!}f^jg^i\ad(f)^{i}\ad^j(g)(a)\hbar^{-2(i+j)} \in \ker(\ad(f)) \cap \ker(\ad(g))
\end{equation}

Applying same procedure to  $\ad(f)^{i}\ad^j(g)(a)$ for all $i,j \in \mathbb{N}$, we have that $\Phi_{1,\hbar}$ is surjective.
By a similar argument as in Lemma (\rmnum{2}) \ref{super decom thm}, we show that $\Phi_{1,\hbar}$ is injective. Thus in the case of $m=0$, the proof is completed by induction on $n$.

Now suppose $m>0$.\\
\textbf{Case 1}: There exists an odd $\mathbb{C^{\times}}$-homogenous $f$ such that $\{f, f\}=1+t$, with $t \in M_\chi$.
By Lemma \ref{super decom thm} we may assume that $\{f, f\}=1$ and have
$$ [f,f]=\hbar^2+a_2\hbar^3 + o(\hbar^3).$$
Set $f_1=f-\frac{1}{2}f\ast a_2\hbar$. Since $[f,[f,f]]=0$, we have $[f,a_2]=0$. This implies
$$[f_1,f_1]=\hbar^2+a_3\hbar^4 + o(\hbar^4).$$

Define $f_k$ for $k=2,3,4 \dots$ iteratively and replace  $f$ by $\LIM f_k$. For the new $f$, we have $[f,f]=\hbar^2$.
By the same argument as in non-quantum case, we have the following $\mathbb{C}^{\times}$-equivariant homomorphism of quantum algebras
$$\Phi_{1,\hbar}: \hat{B}_{1,\hbar}\otimes \bigwedge (\mathbb{C}f)[[\hbar]] \longrightarrow \hat{A}[[\hbar]] ; \quad  a\otimes b \mapsto a\ast b.$$
Here $\bar{B}_{1,\hbar}=\ker(\ad(f))$.

\textbf{Case 2}: Otherwise, by  case 2 in the proof  of Theorem \ref{EDWthm}, we may find $\mathbb{C}^{\times}$-homogenous $f,g \in \abo$  with
$$\{f,g\}=1,\{f,f\}=0,\{f, g \}=0$$
Using similar arguments as in the proof of Theorem \ref{EDWthm} and the case 1, we may construct new $f,g \in A[[\hbar]]$
such that
$$[f,g ]=\hbar^2, [f,f]=0, [g, g]=0.$$

Thus  we may construct a $\mathbb{C}^{\times}$-equivariant isomorphism of quantum algebras as  case 2 in the proof of Theorem \ref{EDWthm}. The proof is completed by induction on $\dim(V)=(2n| m)$. $\hfill\Box$

\section{Realization of W-superalgebras via Darboux-Weinsten theorem}\label{sec realzation of W}

In the next two subsections we recall the results from \cite{Lo1} on the Rees algebra of the filtered  associative algebras and completion of $\mathcal{U}$. Although those results are given in non-super cases, they are still hold in the super cases.

\subsection{Rees algebras}

Let $\mathcal{A}$ be an associative algebra with an increasing filtration $\F_i\mathcal{A},$ $i \in \mathbb{Z}$ such that $\bigcup_{i \in \mathbb{Z}}\F_i\mathcal{A}=\mathcal{A}, \bigcap_{i \in \mathbb{Z}}\F_i\mathcal{A}=0$.
Let $\mathbf{R}_{\hbar}(\mathcal{A})=\bigoplus_{i\in \mathbb{Z}}\F_i\mathcal{A}\hbar^{i}$ and view it as  a subalgebra of $\mathcal{A}[\hbar,\hbar^{-1}]$.
The algebra $\mathbf{R}_{\hbar}(\mathcal{A})$  is called Rees algebra of $\mathcal{A}$. For an ideal $\mathcal{I} \in \mathcal{A}$, define $\mathbf{R}_\hbar(\mathcal{I})=\bigoplus_{i \in \mathbb{Z}}(\mathcal{I} \cap \F_i\mathcal{A})\hbar^i$.

\begin{defn}(\cite[Definition 3.2.1]{Lo1})
An ideal $I$ in $\mathbb{C}[\hbar]$-algebra $B$ is called $\hbar$-saturated if $I=\hbar^{-1}I\cap B$.
\end{defn}

\begin{prop}(\cite[Proposition 3.2.2]{Lo1})
The map  $\mathcal{I} \mapsto  \mathbf{R}_\hbar(\mathcal{I})$ establishes a bijection between the set $\mathfrak{id}(\mathcal{A})$ and the set of $\hbar$-saturated ideals of $\mathbf{R}_{\hbar}(\mathcal{A})$.
\end{prop}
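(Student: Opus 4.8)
The plan is to exhibit an explicit two‑sided inverse to $\mathcal{I}\mapsto\mathbf{R}_\hbar(\mathcal{I})$. First I would check that the map is well defined. That $\mathbf{R}_\hbar(\mathcal{I})=\bigoplus_i(\mathcal{I}\cap\F_i\mathcal{A})\hbar^i$ is a two‑sided ideal of $\mathbf{R}_\hbar(\mathcal{A})$ is immediate from $\F_i\mathcal{A}\cdot\F_j\mathcal{A}\subseteq\F_{i+j}\mathcal{A}$ together with $\mathcal{A}\mathcal{I}\mathcal{A}\subseteq\mathcal{I}$. For $\hbar$‑saturation one computes $\hbar^{-1}\mathbf{R}_\hbar(\mathcal{I})\cap\mathbf{R}_\hbar(\mathcal{A})$: shifting $\sum_j c_j\hbar^j\in\mathbf{R}_\hbar(\mathcal{I})$ down by $\hbar^{-1}$ produces $\sum_j c_{j+1}\hbar^j$, which lies in $\mathbf{R}_\hbar(\mathcal{A})$ precisely when every $c_{j+1}\in\F_j\mathcal{A}$, and then $c_{j+1}\in\mathcal{I}\cap\F_{j+1}\mathcal{A}\cap\F_j\mathcal{A}=\mathcal{I}\cap\F_j\mathcal{A}$, so the shifted element already belongs to $\mathbf{R}_\hbar(\mathcal{I})$; the reverse inclusion is trivial since $\mathbf{R}_\hbar(\mathcal{I})$ is an ideal. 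Hence $\mathbf{R}_\hbar(\mathcal{I})$ is $\hbar$‑saturated.

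Next I would build the candidate inverse. The algebra $\mathbf{R}_\hbar(\mathcal{A})$ is $\hbar$‑torsion‑free, and the specialization $\varepsilon\colon\mathbf{R}_\hbar(\mathcal{A})\to\mathcal{A}$, $\sum_i a_i\hbar^i\mapsto\sum_i a_i$, is a surjective algebra homomorphism because $\bigcup_i\F_i\mathcal{A}=\mathcal{A}$; a short telescoping argument, again using exhaustiveness, shows $\ker\varepsilon=(\hbar-1)\mathbf{R}_\hbar(\mathcal{A})$, so $\mathbf{R}_\hbar(\mathcal{A})/(\hbar-1)\cong\mathcal{A}$. I would then set $\Theta(J):=\varepsilon(J)$, which is a two‑sided ideal of $\mathcal{A}$ as the image of an ideal under a surjection. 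One composition is essentially automatic: $\varepsilon$ carries the homogeneous piece $(\mathcal{I}\cap\F_i\mathcal{A})\hbar^i$ onto $\mathcal{I}\cap\F_i\mathcal{A}$, and $\bigcup_i(\mathcal{I}\cap\F_i\mathcal{A})=\mathcal{I}$ by exhaustiveness, so $\Theta(\mathbf{R}_\hbar(\mathcal{I}))=\mathcal{I}$.

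The heart of the proof, and the step I expect to be the main obstacle, is the other composition: $\mathbf{R}_\hbar(\Theta(J))=J$ for every $\hbar$‑saturated $J$. Here the $\mathbb{Z}$‑grading of $\mathbf{R}_\hbar(\mathcal{A})$ by $\hbar$‑degree is the organizing device, and one exploits that $\mathbf{R}_\hbar(\mathcal{I})$ is homogeneous to argue degree by degree. Writing the degree‑$i$ homogeneous component of $J$ as $\mathcal{I}_i\hbar^i$ with $\mathcal{I}_i\subseteq\F_i\mathcal{A}$, the ideal property gives $\F_j\mathcal{A}\cdot\mathcal{I}_i\cdot\F_k\mathcal{A}\subseteq\mathcal{I}_{i+j+k}$, while $\hbar$‑saturation (equivalently, the fact that $\mathbf{R}_\hbar(\mathcal{A})/J$ is $\hbar$‑torsion‑free) forces $\mathcal{I}_i=\mathcal{I}_{i+1}\cap\F_i\mathcal{A}$; in particular the $\mathcal{I}_i$ ascend and, iterating, $\mathcal{I}_i=\mathcal{I}_{i+k}\cap\F_i\mathcal{A}$ for all $k\ge 0$. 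Setting $\mathcal{I}:=\bigcup_i\mathcal{I}_i$ one then reads off $\mathcal{I}\cap\F_i\mathcal{A}=\mathcal{I}_i$ and $\mathcal{A}\mathcal{I}\mathcal{A}\subseteq\mathcal{I}$, so $J=\bigoplus_i\mathcal{I}_i\hbar^i=\mathbf{R}_\hbar(\mathcal{I})$, while simultaneously $\Theta(J)=\varepsilon(J)=\bigcup_i\mathcal{I}_i=\mathcal{I}$; combined with the previous paragraph, this shows the two maps are mutually inverse. The delicate point is precisely the saturation identity $\mathcal{I}_i=\mathcal{I}_{i+1}\cap\F_i\mathcal{A}$: a general ideal of $\mathbf{R}_\hbar(\mathcal{A})$ records only \emph{some} ascending filtration on the ideal $\Theta(J)\subseteq\mathcal{A}$, possibly strictly coarser than the induced one, and it is the $\hbar$‑saturation hypothesis that pins this filtration down to $\mathcal{I}\cap\F_\bullet\mathcal{A}$ and thereby turns the correspondence into a bijection.
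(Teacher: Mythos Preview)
The paper does not give its own proof of this proposition; it is simply quoted from \cite[Proposition~3.2.2]{Lo1}. So there is no argument in the paper to compare yours against.

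That said, your argument has a genuine gap. In the crucial step you write ``Writing the degree-$i$ homogeneous component of $J$ as $\mathcal{I}_i\hbar^i$ \ldots'' and proceed as if the $\hbar$-saturated ideal $J$ were graded for the $\hbar$-grading on $\mathbf{R}_\hbar(\mathcal{A})$. Nothing in the hypotheses you have available guarantees this, and in fact it can fail: take $\mathcal{A}=\mathbb{C}[x]$ with $\F_i\mathcal{A}=\{\deg\le i\}$, so that $\mathbf{R}_\hbar(\mathcal{A})\cong\mathbb{C}[u,v]$ via $u=x\hbar$, $v=\hbar$. The ideal $J=(u-1)$ is $\hbar$-saturated (the quotient $\mathbb{C}[u,v]/(u-1)\cong\mathbb{C}[v]$ is $v$-torsion-free) but is not homogeneous. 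One computes $\Theta(J)=(x-1)$ and $\mathbf{R}_\hbar((x-1))=(u-v)$, which is different from $J$; so $J\mapsto\Theta(J)$ is not inverse to $\mathcal{I}\mapsto\mathbf{R}_\hbar(\mathcal{I})$ on all $\hbar$-saturated ideals.

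The resolution is that the statement, as reproduced here, is missing a hypothesis: on the Rees side one should restrict to $\hbar$-saturated ideals that are $\mathbb{C}^\times$-stable (equivalently, homogeneous for the $\hbar$-grading). This is how the result is used later in the paper (cf.\ Proposition~\ref{prop3.4}(3), where $\mathbb{C}^\times$-stability is assumed explicitly). With that hypothesis added, your decomposition $J=\bigoplus_i\mathcal{I}_i\hbar^i$ is legitimate, your saturation identity $\mathcal{I}_i=\mathcal{I}_{i+1}\cap\F_i\mathcal{A}$ goes through exactly as you wrote, and the rest of your argument is correct and is the standard one.
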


Let $\mathfrak{v}=\mathfrak{v}_{\bar{0}}+\mathfrak{v}_{\bar{1}}$ be a superspace with a $\mathbb{C}^\times$-action. Suppose that there is a
Poisson bracket $\{\cdot ,\cdot \}$ on $\hat{A}=S[[\mathfrak{v}]]$  and  $k \in \bbz$ with  $\{t\cdot f, t\cdot g  \}=t^k\{f,g\}$ for all $f,g \in \hat{A}$. Let $(\hat{A}_{\hbar}=S[[\mathfrak{v},\hbar]], \ast)$ be a $\mathbb{C}^\times$-equivariant quantization of $\hat{A}$ as in \S\ref{sec 1.1}. Assume that $A_{\hbar}=S[\mathfrak{v},\hbar]$ form a  subalgebra of quantum algebra $S[[\mathfrak{v},\hbar]]$, i.e. $A_{\hbar}$ is closed under the product $\ast$.

For $f,g \in \hat{A}$ write
$$f\ast g =\sum_{i}D_i(f,g)\hbar^{i},$$
where $D_i: \hat{A} \otimes \hat{A} \rightarrow \hat{A}$ is a linear map determined by the product $\ast$ in the quantum algebra $\hat{A}[[\hbar]]$.

There is a product $\circ$ on $\hat{A}$ given by
\begin{equation}\label{mul from qum}
f\circ g =\sum_{i}D_i(f,g).
\end{equation}
We denote by $\hat{\mathcal{A}}$ the algebra from it. Let $\mathcal{A} \subset \hat{\mathcal{A}}$ be the subalgebra from $A:=S(\vvv)$.

There is an action of $\mathbb{C}^{\times}$ on
the Rees algebra $\mathbf{R}_{\hbar}(\mathcal{A})$ by
\begin{equation}\label{action Rees}
t\cdot a_i\hbar^{i} =t^{i}a_i\hbar^{i}
\end{equation}
for all $a_i\hbar^{i}\in \F_{i}(\mathcal{A})\hbar^{i}$ and $t \in \mathbb{C}^{\times}$.

Take $\mathcal{I} \in \mathfrak{id}(\mathcal{A})$ and set $I=\gr(\mathcal{I})$. Let $\overline{\mathcal{I}}_{\hbar}$ be the closure of $\mathbf{R}_\hbar(\mathcal{I})$ with respect to the $M_{0,\hbar}$-adic topology on $\hat{A}_{\hbar}$.
\begin{prop} (\cite[Proposition 3.2.10]{Lo1}) \label{prop3.4}
\begin{itemize}
\item[(1)] The ideal $\overline{\mathcal{I}}_{\hbar}$ is $\hbar$-saturated.
\item[(2)] The classical part of $\overline{\mathcal{I}}_{\hbar}$ coincides with the closure  $\hat{I}$ of $I$ in $S[[\mathfrak{v}]]$.
Here the classical part  of an ideal of $\hat{A}_\hbar$ means its image under the  projection $\hat{A}_\hbar \rightarrow \hat{A}$ by specializing $\hbar$ to $0$.
\item[(3)] Suppose that the grading on $\mathfrak{v}$ is positive. Let $\mathcal{I}_\hbar$ be a closed $\hbar$-saturated, $\mathbb{C}^\times$-stable ideal of $\overline{\mathbf{R}_\hbar(A)}$, then there exists a unique $\mathcal{I} \in \mathfrak{id}(\mathcal{A})$ such that $\mathcal{I}_\hbar \cap A[\hbar]=\mathbf{R}_\hbar(\mathcal{I})$.
\end{itemize}
\end{prop}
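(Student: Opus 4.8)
The plan is to follow the proof of \cite[Proposition~3.2.10]{Lo1} line by line. The only preliminary remark is that the Rees construction, the $M_{0,\hbar}$-adic completion and the $\mathbb{C}^{\times}$-grading \eqref{action Rees} all make literal sense for $\mathbb{Z}_{2}$-graded-commutative algebras, and that the star product $\ast$, hence every $D_{i}$ in \eqref{mul from qum}, is an \emph{even} map; consequently all the gradings, filtrations and structure maps occurring in the argument are even and no sign subtleties appear, so the super case is inert. Throughout I use the identification of $\overline{\mathbf{R}_{\hbar}(A)}$ with $\hat{A}_{\hbar}$ under which \eqref{action Rees} becomes the total-degree grading of $S[[\mathfrak{v},\hbar]]$; accordingly $\mathbf{R}_{\hbar}(\mathcal{A})$, $\mathbf{R}_{\hbar}(\mathcal{I})$, $M_{0,\hbar}$ and their closures are graded, each weight space is finite-dimensional, and passing to the $M_{0,\hbar}$-adic closure leaves every individual weight space unchanged.

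For part (1), one starts from \cite[Proposition~3.2.2]{Lo1}, by which $\mathbf{R}_{\hbar}(\mathcal{I})$ is $\hbar$-saturated in $\mathbf{R}_{\hbar}(\mathcal{A})$. For a graded ideal, $\hbar$-saturatedness is the purely graded condition that $\hbar b$ lying in the weight-$(w+1)$ component of the ideal, with $b$ homogeneous of weight $w$, forces $b$ into the weight-$w$ component; since $\mathbf{R}_{\hbar}(\mathcal{I})$ and $\overline{\mathcal{I}}_{\hbar}=\overline{\mathbf{R}_{\hbar}(\mathcal{I})}$ share the same weight spaces, this condition transfers verbatim. Equivalently: if $\hbar f\in\overline{\mathcal{I}}_{\hbar}$, decompose $f=\sum_{w}f_{w}$ with $f_{w}$ in the weight-$w$ component of $\overline{\mathbf{R}_{\hbar}(A)}$; comparing weight components shows $\hbar f_{w}$ lies in the weight-$(w+1)$ component of $\overline{\mathcal{I}}_{\hbar}$, and this forces each $f_{w}$ into the weight-$w$ component of $\overline{\mathcal{I}}_{\hbar}$, so $f\in\overline{\mathcal{I}}_{\hbar}$.

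For part (2), the standard Rees identity $\mathbf{R}_{\hbar}(\mathcal{A})/\hbar\,\mathbf{R}_{\hbar}(\mathcal{A})\cong\gr\mathcal{A}=S(\mathfrak{v})$, applied also to the ideal, identifies the classical part of $\mathbf{R}_{\hbar}(\mathcal{I})$ with $\gr\mathcal{I}=I$. It then remains to see that ``classical part'' commutes with ``$M_{0,\hbar}$-adic closure'' on this graded ideal: the specialization map $\hat{A}_{\hbar}\to\hat{A}$ is continuous, respects the weight decomposition, and carries the weight-$w$ component of $\overline{\mathcal{I}}_{\hbar}$ onto that of the closure of $I$; hence its image is precisely the closure $\hat{I}$ of $I$ in $S[[\mathfrak{v}]]$, the absence of extra elements and the surjectivity onto all of $\hat{I}$ being consequences of the $\hbar$-saturatedness from part (1).

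Part (3) carries the real content. Put $J:=\mathcal{I}_{\hbar}\cap A[\hbar]$, an ideal of the Rees algebra $A[\hbar]=\mathbf{R}_{\hbar}(\mathcal{A})$. Since $\mathcal{I}_{\hbar}$ is $\mathbb{C}^{\times}$-stable it is graded, hence so is $J$; since $\mathcal{I}_{\hbar}$ is $\hbar$-saturated in $\hat{A}_{\hbar}$, $J$ is $\hbar$-saturated in $A[\hbar]$ (if $b\in A[\hbar]$ and $\hbar b\in J\subseteq\mathcal{I}_{\hbar}$ then $b\in\mathcal{I}_{\hbar}$, hence $b\in J$). The bijection of \cite[Proposition~3.2.2]{Lo1} then produces a unique $\mathcal{I}\in\mathfrak{id}(\mathcal{A})$ with $J=\mathbf{R}_{\hbar}(\mathcal{I})$, which is the assertion; uniqueness of $\mathcal{I}$ is immediate from the same bijection. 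The positivity of the grading on $\mathfrak{v}$ is what makes this correspondence non-degenerate and constitutes the main obstacle: it forces the weight spaces of $\hat{A}_{\hbar}$ to be finite-dimensional, so that the closed graded ideal $\mathcal{I}_{\hbar}$ is genuinely recoverable as the $M_{0,\hbar}$-adic closure of its finite part $J$ (in particular nonzero whenever $\mathcal{I}_{\hbar}$ is). Establishing this identity $\mathcal{I}_{\hbar}=\overline{J}$ is the delicate point: it requires a degree-by-degree argument showing that every element of $\mathcal{I}_{\hbar}$ is an $M_{0,\hbar}$-adic limit of elements of $J$, using finite-dimensionality of the graded pieces, whereas parts (1) and (2) are merely bookkeeping with the weight grading.
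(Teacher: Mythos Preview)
The paper does not supply its own proof of this proposition: it is stated with the citation \cite[Proposition~3.2.10]{Lo1} and no further argument, the surrounding passage making clear that such results ``are exactly the same as in the non-super case.'' Your proposal---follow Losev's proof verbatim after observing that the Rees construction, $M_{0,\hbar}$-adic completion, and $\mathbb{C}^{\times}$-grading are all even, hence the super structure is inert---is precisely this, spelled out in more detail, and is correct.

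One small comment on part~(3): as literally stated, the proposition only asserts that $J:=\mathcal{I}_{\hbar}\cap A[\hbar]$ has the form $\mathbf{R}_{\hbar}(\mathcal{I})$ for a unique $\mathcal{I}$, which, as you note, is immediate from \cite[Proposition~3.2.2]{Lo1} once $J$ is seen to be $\hbar$-saturated. You correctly identify that the genuine content (and the reason positivity is assumed) is the stronger fact $\mathcal{I}_{\hbar}=\overline{J}$, which is what is actually invoked later in the paper, e.g.\ in the last equality of the first chain in the proof of Proposition~\ref{propinclu}. Your sketch of this via finite-dimensionality of weight spaces is the right idea and matches Losev's argument.
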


\subsection{Completions of $\mathcal{U}$} \label{sec: 3.2}
Let $\mathcal{U}$ be the universal enveloping algebra of the Lie superalgebra $\ggg$. Let $\ggg=\oplus_i\ggg(i)$ be the good $\mathbb{Z}$-grading in \S\ref{sec introduction}. The \textit{Kazhdan action} of $\mathbb{C}^{\times}$ on $\ggg$ is given by $t\cdot y=t^{i+2}y$ for all $t \in \mathbb{C}^{\times}$ and $y \in \ggg(i)$. Since $\chi \in \ggg^{*}$ is an element of degree $-2$, $\chi$ is a fixed point of the induced $\mathbb{C}^{\times}$-action on $\ggg^{*}$. Extend the Kazhdan action to $\mathcal{U}$ canonically.

\subsubsection{} Let $A_{\hbar}=S[\ggg][\hbar]$ (see Example \ref{ex quan} for the definition), $A=S[\ggg]$ and $\pi:A_{\hbar} \rightarrow A$  be the algebra homomorphism given by taking $\hbar=0$. Let $M_{\chi}$ be the maximal ideal of $A=S[\ggg]$ corresponding to the point $\chi \in \gbz^{*}$ and  $A^{\wedge_{\chi}}$ be the completion of $A$ with respect to the ideal $M_{\chi}$. Set
$$(A_{\hbar})^{\wedge_{\chi}}=\lim_{ \leftarrow }A_{\hbar}/ M_{\chi,\hbar}^i,$$
where $ M_{\chi,\hbar}$ is the two-sided  ideal of $A_{\hbar}$ generated by $\pi^{-1}(M_{\chi})$.
We extend the Kazadan action on $A_{\hbar}$ to $(A_{\hbar})^{\wedge_{\chi}}$ by $t\cdot \hbar =t\hbar$.
It is easy to check that $(A_{\hbar})^{\wedge_{\chi}}$ is a $\mathbb{C}^{\times}$-equivariant quantization of the Poisson algebra $A^{\wedge_{\chi}}$. We simplify the notations $A^{\wedge_{\chi}}$ and $A^{\wedge_{\chi}}_{\hbar}$ by $\hat{A}$ and by $\hat{A}_{\hbar}$, respectively.

\subsubsection{}  Set
\begin{equation}\label{g_e}
\tilde{\ggg}_e=\begin{cases}
 \ggg_e \quad \mbox{if $\dim(\ggg(-1)_{\bar{1}})$ is even}; \\
\ggg_e +\mathbb{C}\Theta  \quad \mbox{if $\dim(\ggg(-1)_{\bar{1}})$ is odd }.
 \end{cases}
 \end{equation}
Choose a homogenous (with respect to the Kazhdan action) basis
$$v_{\pm1},v_{\pm2},\dots,v_{\pm n},v_{2n+1}, \dots, v_{2n+m}$$
of $\ggg'$ such that $v_1,v_2, \dots, v_n$ (\textit{resp}. $v_{-1},v_{-2}, \dots, v_{-n}$) form a basis of $\mmm'$  (\textit{resp}. $(\mmm')^*$) and
$v_{2n+1}, \dots,  v_{2n+m}$ form a basis of $\tilde{\ggg}_{e}'$. Let $d_i$ denote the degree of $v_i$, $i\in\{\pm1,\pm2,\dots,\pm n; 2n+1,\dots,2n+m\}$.
Denote by $\mathcal{A}^{\heartsuit}$ the subalgebra of $\hat{A}$ consisting of elements $\sum_{i<n}f_i$ where $f_i$ is a homogenous power
series with degree $i$ (with respect to the Kazhdan grading). Let $\mathcal{I}^{\heartsuit}(k)$ be the ideal of $\mathcal{A}^{\heartsuit}$ consisting of all those $a$ satisfying that any monomial in $a$ has the form $v_{i_{1}}\circ \cdots \circ v_{i_{l}}$ with $v_{i_{l-k+1}} \in \mmm'$.

We need to introduce some new notion before giving the next lemmas.  Generally, for an associative algebra $\mathcal{A}$ with $\mathbb{C}^{\times}$-action,
by $\mathcal{A}_{\mathbb{C}^{\times}\text{-fin}}$ we mean the $\mathbb{C}^{\times}$-local finite part of $\mathcal{A}$, i.e. the sum of all finite-dimensional $\mathbb{C}^{\times}$-stable subspaces $\underline{\mathcal{A}} \subset \mathcal{A}$.

\begin{lemma}\label{lemma3.4}
\begin{itemize}
\item[(\rmnum{1})]
The map
$$A[\hbar] \rightarrow \mathcal{A}[\hbar,\hbar^{-1}];\; \sum_{i,j}f_i\hbar^j \mapsto \sum_{i,j}f_i\hbar^{i+j}$$ is a $\mathbb{C}^{\times}$-equivariant monomorphism of $\mathbb{C}[\hbar]$-algebras. Its image coincides with $\mathbf{R}_{\hbar}(\mathcal{A})$.
\item[(\rmnum{2})]
The map
$$S[[\ggg,\hbar]]_{\mathbb{C}^{\times}\text{-fin}} \rightarrow \mathcal{A}^{\heartsuit}[\hbar,\hbar^{-1}];  \sum_{i,j}f_i\hbar^j \mapsto \sum_{i,j}f_i\hbar^{i+j}$$
is a $\mathbb{C}^{\times}$-equivariant monomorphism of $\mathbb{C}[\hbar]$-algebras. Its image coincides with $\mathbf{R}_{\hbar}(\mathcal{A}^{\heartsuit})$.
\end{itemize}
Here $f_i$  is a $\mathbb{C}^{\times}$-homogenous element in $A$ with $t\cdot f_i =t^if_i$ for all $t \in \mathbb{C}^{\times}$.
The $\mathbb{C}^{\times}$-action on the images of the above two maps are give by \eqref{action Rees}.
\end{lemma}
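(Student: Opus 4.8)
The plan is to verify directly that each of the two displayed maps is a well-defined algebra homomorphism, that it is injective, and that its image is exactly the Rees algebra, treating the two parts in parallel since (ii) is essentially (i) restricted to the truncated subalgebra $\mathcal{A}^{\heartsuit}$. For part (i), first I would observe that every element $a \in A = S[\ggg]$ decomposes uniquely as $a = \sum_i f_i$ with $f_i$ the $\mathbb{C}^{\times}$-homogeneous component of Kazhdan-degree $i$; since the grading on $\ggg$ under the Kazhdan action is by integers, this is a finite sum, so $A[\hbar]$ really is spanned by monomials $f_i \hbar^j$ and the map $\Psi\colon f_i\hbar^j \mapsto f_i\hbar^{i+j}$ is well-defined on the vector-space level and visibly $\mathbb{C}^{\times}$-equivariant once $\mathbf{R}_\hbar(\mathcal{A})$ carries the action \eqref{action Rees}. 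The key point making $\Psi$ multiplicative is the compatibility of the star product with the Kazhdan grading: because the quantization $\hat{A}_\hbar$ is $\mathbb{C}^{\times}$-equivariant with $t\cdot\hbar = t\hbar$, the operator $D_i$ appearing in $f\ast g = \sum_i D_i(f,g)\hbar^i$ shifts Kazhdan-degree exactly by $-i$; hence in the algebra $\hat{\mathcal{A}}$ (with product $\circ$ from \eqref{mul from qum}) one has $\deg(f_i \circ g_j) = $ a sum of terms of degree $\le i+j$, and more precisely the degree-$(i+j-k)$ part of $f_i\circ g_j$ is $D_k(f_i,g_j)$. Comparing this with the product in $\mathcal{A}[\hbar,\hbar^{-1}]$ and with the star product on $\hat{A}_\hbar[\hbar^{-1}]$ shows $\Psi(a\hbar^j)\Psi(b\hbar^l) = \Psi((a\circ b)\hbar^{j+l})$, i.e. $\Psi$ is an algebra map.

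Next I would check the description of the image. By definition $\mathbf{R}_\hbar(\mathcal{A}) = \bigoplus_{n\in\mathbb{Z}} \F_n\mathcal{A}\,\hbar^n$, where $\F_n\mathcal{A}$ is the span of all $f_i$ with $i \le n$; a monomial $f_i\hbar^{i+j}$ lies in degree $i+j = n$ and satisfies $i \le n$ precisely when $j \ge 0$, which is exactly the constraint coming from $A[\hbar]$ (nonnegative powers of $\hbar$). So $\Psi$ maps $A[\hbar]$ onto $\mathbf{R}_\hbar(\mathcal{A})$, and conversely every element of $\F_n\mathcal{A}\,\hbar^n$ is hit. Injectivity is immediate: if $\sum_{i,j} f_i\hbar^{i+j} = 0$ in $\mathcal{A}[\hbar,\hbar^{-1}]$, then collecting by the total power $n = i+j$ of $\hbar$ forces, for each $n$, $\sum_{i\le n} f_i = 0$ in $\mathcal{A}$ with the $f_i$ homogeneous of distinct degrees, hence each $f_i = 0$, so all coefficients vanish.

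For part (ii) the argument is the same, replacing $A$ by the subspace of $S[[\ggg]]$ of power series with homogeneous components only in negative Kazhdan-degrees — here one must note $S[[\ggg,\hbar]]_{\mathbb{C}^{\times}\text{-fin}}$ consists exactly of the series $\sum_{i,j} f_i\hbar^j$ with $f_i$ homogeneous of degree $i$ and (for local finiteness together with completeness) with $i$ bounded above, so that after the degree shift $i \mapsto i+j$ with $j\ge 0$ one lands in $\mathbf{R}_\hbar(\mathcal{A}^{\heartsuit}) = \bigoplus_n \F_n\mathcal{A}^{\heartsuit}\hbar^n$ where $\F_n\mathcal{A}^{\heartsuit}$ is spanned by homogeneous power series of degree $\le \min(n, 0)$ — and that the product $\circ$ restricts to $\mathcal{A}^{\heartsuit}$ because $D_k$ lowers degree. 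The one subtlety is to confirm that $\mathbb{C}^{\times}$-local finiteness of an element of $S[[\ggg,\hbar]]$ is equivalent to boundedness-above of the Kazhdan-degrees occurring, which follows because each graded piece is finite-dimensional and a finite-dimensional $\mathbb{C}^{\times}$-stable subspace can only meet finitely many of them.

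I expect the main obstacle to be bookkeeping rather than conceptual: namely, pinning down precisely which completion/local-finiteness condition on the source corresponds to which bound on the filtration degree in the target, and verifying that the star product (equivalently the operators $D_i$) is strictly compatible with the Kazhdan grading in the claimed degree-shifting sense. Once that compatibility is in hand — it is a direct consequence of $\mathbb{C}^{\times}$-equivariance of the quantization with $t\cdot\hbar=t\hbar$ — both the homomorphism property and the identification of the image fall out by collecting homogeneous components.
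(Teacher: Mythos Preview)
The paper omits the proof, remarking only that it is identical to the non-super case in \cite{Lo1}. Your direct verification is the standard argument and is correct for part~(i).

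For part~(ii) the approach is right but several of your auxiliary descriptions need correcting. The algebra $\mathcal{A}^{\heartsuit}$ consists of those $\sum_i f_i \in \hat{A}$ with Kazhdan degree bounded \emph{above} (by some $n$ depending on the element), not ``only negative''; correspondingly $\F_n\mathcal{A}^{\heartsuit}$ is spanned by homogeneous series of degree $\le n$, not $\le \min(n,0)$. More importantly, $\mathbb{C}^{\times}$-local finiteness in $S[[\ggg,\hbar]]$ means that only finitely many \emph{total} degrees $i+j$ occur, not merely that the $i$'s are bounded above: the series $\sum_{j\ge 0}\hbar^j$ has $i=0$ throughout yet is not locally finite. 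Finally, the Kazhdan-graded pieces of $\hat{A}$ are \emph{not} finite-dimensional (mixing positive- and negative-degree generators of $\ggg'$ produces infinitely many monomials in any fixed degree), so your stated justification of the last subtlety fails; what actually suffices is simply that a locally finite element is by definition a finite sum of eigenvectors, and a degree-$n$ eigenvector $\sum_{j\ge 0} f_{n-j}\hbar^j$ maps to $\bigl(\sum_{j} f_{n-j}\bigr)\hbar^n \in \F_n\mathcal{A}^{\heartsuit}\cdot\hbar^n$. With these fixes your argument goes through.
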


\begin{lemma}\label{equ of ideals}
Let $\tilde{\mathfrak{m}}$ be a subspace in $\mathcal{A}^{\heartsuit}$, which  has a basis in the form of
$v_{i}+u_{i}, i=1,\cdots m$, with $u_i \in \text{F}_{d_{i}}(\mathcal{A}^{\heartsuit}) \cup \mathfrak{g}'^{2}\mathcal{A}^{\heartsuit}+\text{F}_{d_{i}-2}(\mathcal{A}^{\heartsuit})  $. Then
$\mathcal{A}^{\heartsuit}\tilde{\mathfrak{m}} \in \mathcal{I}^{\heartsuit}(1)$ and for any $k \in \mathbb{N}$ there is
$l \in \mathbb{N}$ such that $\mathcal{A}^{\heartsuit}\tilde{\mathfrak{m}}^{l} \in \mathcal{I}^{\heartsuit}(k)$.
\end{lemma}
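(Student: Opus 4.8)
\medskip\noindent\emph{Plan of proof.}\enspace The idea is to reduce everything to bookkeeping with sorted monomials in the deformed product $\circ$, and then run an induction on $k$, using the completeness of $\mathcal{A}^{\heartsuit}$ to absorb the successive-approximation errors exactly as in the proofs of Lemmas~\ref{non-super decom thm} and~\ref{lemma3.4}. Since $\mathcal{I}^{\heartsuit}(k)$ is a left ideal of $\mathcal{A}^{\heartsuit}$ and $\mathcal{A}^{\heartsuit}\tilde{\mathfrak{m}}$ is by definition the left ideal generated by the elements $v_i+u_i$, the first assertion $\mathcal{A}^{\heartsuit}\tilde{\mathfrak{m}}\subseteq\mathcal{I}^{\heartsuit}(1)$ reduces to showing $v_i+u_i\in\mathcal{I}^{\heartsuit}(1)$ for each $i$. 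The summand $v_i$ lies in $\mmm'$, hence is a length-one monomial whose unique factor is in $\mmm'$, so $v_i\in\mathcal{I}^{\heartsuit}(1)$; the real point is $u_i\in\mathcal{I}^{\heartsuit}(1)$. For this I would decompose $u_i$ into its Kazhdan-homogeneous components: by hypothesis the top one lies in $\mathfrak{g}'^{2}\mathcal{A}^{\heartsuit}$ and the remaining ones in $\text{F}_{d_i-2}(\mathcal{A}^{\heartsuit})$. Rewriting each component as a convergent sum of sorted monomials and peeling off the part that already ends in an $\mmm'$-factor, what is left over is a correction whose $\mathfrak{g}'$-order has strictly increased (the deformed product $\circ$ lowers the $\mathfrak{g}'$-order only through commutators, and the $\mathfrak{g}'^{2}$ assumption is precisely what forces the leading behaviour to be of order $\ge 2$); iterating and using that $\mathcal{A}^{\heartsuit}$ is complete for the $M_\chi$-adic (equivalently $\mathfrak{g}'$-adic) topology, the iteration converges inside $\mathcal{I}^{\heartsuit}(1)$.

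For the second assertion I would induct on $k$, the base case $k=1$ being the first part. Assume $\mathcal{A}^{\heartsuit}\tilde{\mathfrak{m}}^{\,l}\subseteq\mathcal{I}^{\heartsuit}(k)$ for some $l$. Since $\mathcal{A}^{\heartsuit}\tilde{\mathfrak{m}}^{\,l+j}=(\mathcal{A}^{\heartsuit}\tilde{\mathfrak{m}}^{\,l})\circ\tilde{\mathfrak{m}}^{\,j}$, it suffices to show that right multiplication of $\mathcal{I}^{\heartsuit}(k)$ by sufficiently many generators $v_p+u_p$ lands in $\mathcal{I}^{\heartsuit}(k+1)$. Taking a sorted monomial $v_{i_1}\circ\cdots\circ v_{i_l}$ whose last $k$ factors lie in $\mmm'$ and normal-ordering the product with $v_p$ (the $u_p$-contribution being handled as in the first part), one gets the straightened term, in which the last $k$ factors are still in $\mmm'$ and which therefore stays in $\mathcal{I}^{\heartsuit}(k)$ --- and, after enough $\mmm'$-generators have been accumulated, in $\mathcal{I}^{\heartsuit}(k+1)$ --- together with commutator terms $[\,\cdot\,,v_p]$ of strictly smaller length. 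A commutator $[w,v_p]$ with $w\in\mmm'$ need not lie in $\mmm'$, so a single straightening step can lower the trailing-$\mmm'$ count by one; the key is that it simultaneously drops the $\mathfrak{g}'$-order and is constrained in Kazhdan degree, so it can occur only boundedly often before the surviving monomial is already confined to a Kazhdan-degree/$\mathfrak{g}'$-order range contained in $\mathcal{I}^{\heartsuit}(k+1)$. Turning this into an explicit bound $l=l(k)$ and checking convergence of the (infinitely many) monomials produced in $\mathcal{A}^{\heartsuit}$ finishes the induction.

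The hard part is exactly this last estimate: controlling how the deformed multiplication $\circ$ interacts with the ``trailing $\mmm'$-factor'' count. Concretely one must show that for a product of $l$ generators of $\tilde{\mathfrak{m}}$, after full normal ordering every surviving monomial either keeps at least $k$ trailing factors in $\mmm'$ or has been forced into a Kazhdan-degree/$\mathfrak{g}'$-order range that is itself contained in $\mathcal{I}^{\heartsuit}(k)$; the input for the estimate is that $\mmm$ is a subalgebra with $\chi$ vanishing on $[\mmm,\mmm]$, so that $[\mmm',\mmm']\subseteq\mmm'$ with no scalar term, combined with the finiteness built into the Kazhdan filtration on $\mathcal{A}^{\heartsuit}$. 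Once that bound is in place, the remainder is a routine completeness argument of the kind already carried out in Section~\ref{secSDTHMS}.
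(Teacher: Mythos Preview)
The paper does not actually prove this lemma: immediately after stating Lemmas~\ref{lemma3.4}, \ref{equ of ideals} and \ref{compatible lemma} it says they are the super versions of Lemmas 3.2.5, 3.2.8, 3.2.9 of \cite{Lo1} and that ``the proofs of them \ldots\ are exactly the same as in the non-super case.'' So the benchmark here is Losev's argument, and your outline is broadly of the same flavour (normal ordering plus filtration bookkeeping), but two points deserve correction.

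For the first assertion you are working much harder than necessary. The clean observation --- and the one Losev uses --- is that every Kazhdan-homogeneous basis vector of $\ggg'$ lying \emph{outside} $\mmm'$ has Kazhdan degree~$\ge 1$ (indeed $\ggg_e\subseteq\bigoplus_{i\ge 0}\ggg(i)$, $\Theta\in\ggg(-1)$, and the complement $(\mmm')^{*}$ sits in $\bigoplus_{i\ge -1}\ggg(i)$), while $v_i\in\mmm'$ has $d_i\le 1$. Hence any sorted $\circ$-monomial with no $\mmm'$ factor has Kazhdan degree at least its length. The hypothesis on $u_i$ says each monomial in $u_i$ either has length $\ge 2$ and Kazhdan degree $\le d_i\le 1$, or has Kazhdan degree $\le d_i-2\le -1$; in either case the previous sentence forces at least one $\mmm'$ factor. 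No iteration or completeness argument is needed.

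For the second assertion your plan has a genuine gap and a small confusion. The confusion: you write that ``a commutator $[w,v_p]$ with $w\in\mmm'$ need not lie in $\mmm'$,'' yet two sentences later you correctly note $[\mmm',\mmm']\subseteq\mmm'$ (because $l$ is Lagrangian, so $\chi$ vanishes on $[\mmm,\mmm]$). With the definition of $\mathcal{I}^{\heartsuit}(k)$ as a left ideal one has $\mathcal{I}^{\heartsuit}(k-1)\circ v_p\subseteq\mathcal{I}^{\heartsuit}(k)$ outright; the obstruction lives entirely in $\mathcal{I}^{\heartsuit}(k-1)\circ u_p$, where right multiplication by the non-$\mmm'$ constituents of $u_p$ can destroy trailing $\mmm'$ factors. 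Your inductive scheme therefore does not close up as written. What Losev actually does is a direct Kazhdan-degree versus length count on the whole product: a sorted monomial of length $r$ with fewer than $k$ factors from $\mmm'$ has Kazhdan degree at least $(r-k+1)-(k-1)D$ (where $-D$ bounds the degrees on $\mmm'$ from below), whereas the product of $l$ generators $v_{i_j}+u_{i_j}$ lies in $\text{F}_{d_{i_1}+\cdots+d_{i_l}}$ and each replacement of $v_{i_j}$ by $u_{i_j}$ either raises the length or drops the degree by $2$. Balancing these two estimates yields the explicit $l=l(k)$; the completeness of $\mathcal{A}^{\heartsuit}$ is only used to make sense of the infinite PBW expansion, not as an approximation device. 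This quantitative step is precisely what is missing from your plan.
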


\begin{lemma}\label{compatible lemma}
For the universal algebra $ \mathcal{U} \subset \mathcal{A}^{\heartsuit}$, the systems $\mathcal{U}\mmm'^{k}$ and $\mathcal{I}(k):=\mathcal{I}^{\heartsuit} (k)\cap \mathcal{U} $ are compatible, i.e. for any $k \in \mathbb{N}$ there exist
$k_1,k_2$ such that $\mathcal{I}(k_1)\subset \mathcal{A}\mmm'^k$, $\mathcal{A}\mmm'^{k_2} \subset \mathcal{I}(k)$.
\end{lemma}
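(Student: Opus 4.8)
The plan is to prove the two cofinality inclusions separately: the inclusion $\mathcal{I}(k)\subseteq\mathcal{U}\mmm'^{k}$ (so that $k_{1}=k$ works) follows by unwinding definitions, while the inclusion $\mathcal{U}\mmm'^{k_{2}}\subseteq\mathcal{I}(k)$ for a suitable $k_{2}=k_{2}(k)$ is exactly where Lemma~\ref{equ of ideals} enters.

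For the first inclusion, recall that the fixed homogeneous basis of $\ggg'$ is ordered so that the generators $v_{1},\dots,v_{n}$ spanning $\mmm'$ are the \emph{last} ones, and that membership in $\mathcal{I}^{\heartsuit}(k)$ is by definition read off from the expansion of an element in the ordered $\circ$-monomials. Any $a\in\mathcal{I}(k)=\mathcal{I}^{\heartsuit}(k)\cap\mathcal{U}$ lies in $\mathcal{U}$, hence has a \emph{finite} such expansion $a=\sum c_{w}\,w$ with $w=v_{i_{1}}\circ\cdots\circ v_{i_{l}}$, and the condition $a\in\mathcal{I}^{\heartsuit}(k)$ forces $v_{i_{l-k+1}}\in\mmm'$ for every $w$ with $c_{w}\neq0$; since the $\mmm'$-generators are the largest in the chosen order, this in fact forces the last $k$ factors $v_{i_{l-k+1}},\dots,v_{i_{l}}$ all to lie in $\mmm'$. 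Writing $w=w'\circ m_{1}\circ\cdots\circ m_{q}$ with $q\ge k$, $m_{j}\in\mmm'$ and $w'$ a product of the remaining generators, associativity of $\circ$ gives $w=\bigl(w'\circ m_{1}\circ\cdots\circ m_{q-k}\bigr)\circ\bigl(m_{q-k+1}\circ\cdots\circ m_{q}\bigr)\in\mathcal{U}\cdot(\mmm')^{k}=\mathcal{U}\mmm'^{k}$, and summing over $w$ yields $a\in\mathcal{U}\mmm'^{k}$.

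For the second inclusion, I would apply Lemma~\ref{equ of ideals} to the subspace $\tilde{\mmm}:=\mmm'\subseteq\mathcal{A}^{\heartsuit}$: since $v_{1},\dots,v_{n}$ is literally a basis of $\mmm'$, the hypothesis of that lemma holds with all $u_{i}=0$, so there is an $l\in\mathbb{N}$, depending on $k$, with $\mathcal{A}^{\heartsuit}\mmm'^{l}\subseteq\mathcal{I}^{\heartsuit}(k)$. As $\mathcal{U}\subseteq\mathcal{A}^{\heartsuit}$ and the product $\circ$ agrees on the two, $\mathcal{U}\mmm'^{l}\subseteq\mathcal{A}^{\heartsuit}\mmm'^{l}\subseteq\mathcal{I}^{\heartsuit}(k)$; intersecting with $\mathcal{U}$ gives $\mathcal{U}\mmm'^{l}\subseteq\mathcal{I}^{\heartsuit}(k)\cap\mathcal{U}=\mathcal{I}(k)$, so one may take $k_{2}=l$. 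Combining the two steps shows that $\{\mathcal{U}\mmm'^{k}\}_{k}$ and $\{\mathcal{I}(k)\}_{k}$ are cofinal, that is, compatible.

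The only delicate points here are bookkeeping ones rather than genuine obstacles: one must use that the chosen order places the $\mmm'$-generators last, so that the defining condition of $\mathcal{I}^{\heartsuit}(k)$ really produces a factorization through the left ideal generated by $(\mmm')^{k}$, and one must note that elements of $\mathcal{U}$ have finite monomial expansions so that intersecting the completed ideals $\mathcal{A}^{\heartsuit}\mmm'^{l}$ and $\mathcal{I}^{\heartsuit}(k)$ with $\mathcal{U}$ is harmless. The substantial input — that a sufficiently high power $\mathcal{A}^{\heartsuit}\mmm'^{l}$ falls inside $\mathcal{I}^{\heartsuit}(k)$, which rests on degree estimates in the Kazhdan grading — is isolated in Lemma~\ref{equ of ideals} and is used here as a black box.
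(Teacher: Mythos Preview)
Your argument is correct and is exactly the expected one: the inclusion $\mathcal{I}(k)\subseteq\mathcal{U}\mmm'^{k}$ is immediate from the definition of $\mathcal{I}^{\heartsuit}(k)$ once the PBW order places the $\mmm'$-generators last, and the converse cofinality is precisely the content of Lemma~\ref{equ of ideals} applied with $\tilde{\mmm}=\mmm'$ (all $u_i=0$). The paper itself omits the proof, simply remarking that it is identical to Losev's non-super argument (Lemma~3.2.9 of \cite{Lo1}); your write-up is that argument.
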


The above-mentioned three lemmas are super versions of Lemmas 3.2.5, 3.2.8 and 3.2.9 in \cite{Lo1}, which will be used later. We omit the proofs of them, which are exactly the same as in the non-super case.

\subsection{Construction of $\mathscr{W}_\chi$}\label{subsec con of w}
Recall that the Poisson bivector $\Pi$ on the closed point $\chi$ of $\text{Spec}(\hat{A})$ is given by $(x,y)\mapsto \chi([x,y])$ for all $x,y \in \ggg$.
It is easy to check that $\Pi$  is non-degenerate on $[\ggg,f] \subset \mathcal{T}^*_\chi (\text{Spec}(\hat{A}))$.
Thus
$\Pi$ is also non-degenerate on $V=\bar{\mathfrak{m}}\oplus \bar{\mathfrak{m}}^*$ (see \S\ref{sec: completion 0} for the notations).
Since there is a $\mathbb{C}^{\times}$-stable decomposition $\ggg=V \oplus \tilde{\ggg}_{e}$  with respect to the Kazhdan action, we have $\hat{B}_{\hbar}=S[[\tilde{\ggg}_{e},\hbar]]$ as vector superspaces.
By Theorem \ref{SQDWtm}, we have the following isomorphism
\begin{equation}\label{eqdecom}
 \Phi_{\hbar}: \mathbf{A}_\hbar^{\wedge_{0}}(V)\hat{\otimes}_{\mathbb{C}[[\hbar]]}\hat{B}_{\hbar} \rightarrow \hat{A}_{\hbar}
\end{equation}
of quantum algebras.

Now we set
\begin{equation}
\mathscr{W}_{\chi}=\frac{(\hat{B}_{\hbar})_{\mathbb{C}^{\times}\text{-fin}}}{(\hbar-1)(\hat{B}_{\hbar})_{\mathbb{C}^{\times}\text{-fin}}}.
\end{equation}
We will show that $\mathscr{W}_{\chi}$ is isomorphic to the $W$-superalgebra $\mathcal{U}(\ggg,e)$ . Let us discuss the relation between $\mathscr{W}_{\chi}$ and the transverse slices of
super nilpotent orbit $G\cdot \chi$ before doing that.
Since there is a $\mathbb{C}^{\times}$-stable decomposition $\ggg=V \oplus \tilde{\ggg}_{e}$  with respect to the Kazhdan action, we have $\hat{B}_{\hbar}=S[[\tilde{\ggg}_{e},\hbar]]$ as vector superspaces.  Since all $\mathbb{C}^{\times}$-homogenous elements in $\tilde{\ggg}_{e}$ have positive \textit{Kazhdan} grading, we have $(\hat{B}_{\hbar})_{\mathbb{C}^{\times}\text{-fin}}=S[\tilde{\ggg}_{e},\hbar]$ as vector superspaces.
The product $\ast$ on $(\hat{B}_{\hbar})_{\mathbb{C}^{\times}\text{-fin}}$ gives $S[\tilde{\ggg}_{e},\hbar]$ a quantum algebra structure.
Thus it is clear from the construction that $\mathscr{W}_{\chi}$ is a filtered algebra with $\text{gr}(\mathscr{W}_{\chi})=S[{\ggg}_{e}]$, if $\dim \ggg(-1)_\bo$ is even; and $\text{gr}(\mathscr{W}_{\chi})=S[{\ggg}_{e}]\otimes \bbc[\Theta]$ if $\dim \ggg(-1)_\bo$ is odd, where $\bbc[\Theta]$ is the exterior algebra generated by one element $\Theta$.
This is a PBW theorem for $\mathscr{W}_{\chi}$ (compare with theorem 0.1 in \cite{ZS1}).
This  provides $S[\tilde{\ggg}_{e}]$ with an even Poisson bracket, and $\mathscr{W}_{\chi}$ is a filtered quantization of $\mathscr{W}_{\chi}$ of this Poisson algebra (c.f \cite[\S1.3]{CG}).

We will write $\mathscr{W}_{\chi}$  as $\mathscr{W}$ in the sequence for simplicity.

\subsection{The realization of $\mathcal{U}(\ggg,e)$ via $\mathscr{W}$}
\begin{lemma}\label{decom of qun unive le}
Let $v_{i}$, $i=\pm 1,\pm 2, \ldots, \pm n$ be a basis of the symplectic space $V \subset \ggg'$ with $\langle v_{i}, v_{-j}\rangle =\delta_{i,j}$.
Then  $ v_i- \Phi_{\hbar}^{-1}(v_{i}) \in F_{d_{i}}(\hat{A}_{\hbar}) \cap \mathfrak{g'}^{2}\hat{A}_{\hbar}$ for  $i=\pm1,\pm2, \ldots, \pm n$. We also have $\Phi_{\hbar}^{-1}(x)-x \in V \hat{A}_{\hbar}$ for all $x \in \ggg'$.
\end{lemma}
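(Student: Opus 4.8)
The plan is to extract the lemma from the explicit inductive construction of $\Phi_{\hbar}$ in the proof of Theorem \ref{SQDWtm}, carried out with the given symplectic basis as the initial data. At the first step, since the Poisson bivector satisfies $\Pi(\mathrm{d}v_j,\mathrm{d}v_{-j})=\langle v_j,v_{-j}\rangle=\delta_{ij}$, the linear pair $(f,g)=(v_j,v_{-j})$ already has $\{f,g\}=1+t$ with $t\in M_{\chi}$ and is Kazhdan-homogeneous; one runs the corrections of Lemma \ref{non-super decom thm}, Lemma \ref{super decom thm} (when $v_j$ is odd) and of the proof of Theorem \ref{SQDWtm} $\mathbb{C}^{\times}$-homogeneously, splits off $\mathbf{A}_{\hbar}(V_1)$ with $V_1=\langle v_j,v_{-j}\rangle$, and repeats inside the centralizer $\hat{B}_{1,\hbar}$ with the image there of the next pair $(v_{j'},v_{-j'})$ under the projection provided by the splitting. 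The construction thus yields symplectic generators $w_i$ of $\mathbf{A}_{\hbar}(V)$ of the shape $w_i=v_i+c_i$, with $\Phi_{\hbar}(w_i\otimes 1)=w_i$ under the identifications $\mathbf{A}_{\hbar}^{\wedge_{0}}(V),\hat{B}_{\hbar}\subset\hat{A}_{\hbar}$. Identifying the abstract symplectic generator indexed by $i$ with $v_i$, one has $v_i-\Phi_{\hbar}^{-1}(v_i)=\Phi_{\hbar}^{-1}(c_i)$, so the first assertion reduces (using that $\Phi_{\hbar}$ is a filtered isomorphism identifying the $\mathfrak{g'}$-adic structures) to proving $c_i\in F_{d_i}(\hat{A}_{\hbar})\cap\mathfrak{g'}^{2}\hat{A}_{\hbar}$, and one also checks $c_i\in V\hat{A}_{\hbar}$ along the way.

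The containment $c_i\in F_{d_i}(\hat{A}_{\hbar})$ is immediate: every adjustment in the construction — the classical corrections $g\mapsto g_k$ of Lemma \ref{non-super decom thm}(\rmnum{1}) and its odd analogue, the rescalings $h=(1+t_1)^{-1/2}(f+g)$, the $\hbar$-adic corrections of Theorem \ref{SQDWtm}, and the projections into the successive centralizers — is performed with Kazhdan-homogeneous inputs and outputs, so $w_i$ is Kazhdan-homogeneous of the same degree $d_i$ as $v_i$. For $c_i\in\mathfrak{g'}^{2}\hat{A}_{\hbar}=M_{\chi}^{2}\hat{A}_{\hbar}$ and $c_i\in V\hat{A}_{\hbar}$, the key observation is that each correction term is an $\hbar$-power times a product of two factors, one lying in $V\cap M_{\chi}$ and one lying in $M_{\chi}$: the $V\cap M_{\chi}$-factor is visible as a power $g^i$ with $i\geq 1$ (say in $g_1=g+\sum_i(-1)^i\frac{1}{i!}g^i\ad_f^{i-1}(t)$) or as a monomial $\bar g^{p}\bar f^{q}$ in the projection operators of Lemma \ref{non-super decom thm}(\rmnum{2}) and Lemma \ref{super decom thm}(\rmnum{2}); the $M_{\chi}$-factor is the ``error term'', which lies in $M_{\chi}$ by definition of the Poisson bivector ($t$), or is a higher $\hbar$-coefficient of $[f,g]$ — which lies in $M_{\chi}$ because $[f,g]$ is Kazhdan-homogeneous while its $\hbar^{2}$-coefficient is the constant $\{f,g\}=1$, so its higher coefficients have strictly negative Kazhdan weight and hence vanishing constant term — or is a Lie bracket $[v_{\pm j},v_{\pm j'}]$ arising in the recursion, which has $\chi$-value $\langle v_{\pm j},v_{\pm j'}\rangle=0$ and so again lies in $\ggg'\subset M_{\chi}$. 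Hence every correction term lies in $(V\cap M_{\chi})\,M_{\chi}\,\hat{A}_{\hbar}\subseteq M_{\chi}^{2}\hat{A}_{\hbar}\cap V\hat{A}_{\hbar}$ (powers $g^i$ with $i\geq 2$ already give $M_{\chi}^{2}$); since $\mathfrak{g'}^{2}\hat{A}_{\hbar}$ and $V\hat{A}_{\hbar}$ are stable under all the operations used and the induction has countably many steps, both containments survive the $M_{\chi}$-adic and $\hbar$-adic limits.

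It remains to treat $\Phi_{\hbar}^{-1}(x)-x\in V\hat{A}_{\hbar}$ for $x$ in the complement $\tilde{\ggg}_e$ of $V$ in $\ggg'$: the image of $x$ in $\hat{B}_{\hbar}$ is obtained from $x$ by the same projection operators, which subtract only terms carrying a factor $\bar g^{p}\bar f^{q}\in V\hat{A}_{\hbar}$, so $\Phi_{\hbar}^{-1}(x)$ differs from $x$ by an element of $V\hat{A}_{\hbar}$; combined with the case $x\in V$ already handled this gives the claim for all $x\in\ggg'$. The main obstacle I anticipate is bookkeeping: organizing the countably many inductive steps together with the internal $\hbar$-adic completions of Theorem \ref{SQDWtm} and the outer $M_{\chi}$-adic completion so that the membership of each partial correction in $F_{d_i}(\hat{A}_{\hbar})\cap\mathfrak{g'}^{2}\hat{A}_{\hbar}\cap V\hat{A}_{\hbar}$ is manifestly preserved under all the limits, and verifying that the $\chi$-vanishing of the brackets $[v_{\pm j},v_{\pm j'}]$ persists after the $v$'s are replaced by their corrected, projected forms at each stage of the recursion.
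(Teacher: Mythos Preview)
Your proposal is correct and follows essentially the same approach as the paper. Both argue by running the explicit inductive construction of $\Phi_{\hbar}$ from the proof of Theorem~\ref{SQDWtm} starting with the symplectic basis $\{v_{\pm i}\}$, and tracking at each step that the modifications and the projections into successive centralizers add only terms in $\mathfrak{g}'^{2}\hat{A}_{\hbar}$ (and, for the second statement, in $V\hat{A}_{\hbar}$). The paper's proof is terser: it simply records that $[v_1,v_{-1}]\in\hbar^{2}+\mathfrak{g}'\hat{A}_{\hbar}$, invokes the construction of Theorem~\ref{SQDWtm} to obtain $\bar v_{\pm 1}=v_{\pm 1}+u_{\pm 1}$ with $u_{\pm 1}\in\mathfrak{g}'^{2}\hat{A}_{\hbar}$, notes that the projected $v_i+u_i^{(1)}$ (for $i=\pm2,\dots,\pm n$) again have $u_i^{(1)}\in\mathfrak{g}'^{2}\hat{A}_{\hbar}$ and still satisfy $[v_i+u_i^{(1)},v_j+u_j^{(1)}]-\delta_{i,-j}\hbar^{2}\in\mathfrak{g}'\hat{A}_{\hbar}$, and then inducts; the second statement is dispatched with ``follows from the construction.'' Your version supplies the microscopic reasons (the $g^{i}$- or $\bar f^{p}\bar g^{q}$-factor forcing membership in $V\hat{A}_{\hbar}\cap M_{\chi}$, the error-term factor lying in $M_{\chi}$) that the paper leaves implicit. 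One small caution: your reduction ``it suffices to show $c_i\in F_{d_i}\cap\mathfrak{g}'^{2}\hat{A}_{\hbar}$, using that $\Phi_{\hbar}$ identifies the $\mathfrak{g}'$-adic structures'' is mildly circular as stated, since preservation of the $\mathfrak{g}'$-adic filtration is itself a consequence of what you are proving; the cleanest fix is to argue, exactly as the paper does, directly that the cumulative correction at every inductive stage lies in $\mathfrak{g}'^{2}\hat{A}_{\hbar}$, from which the filtration compatibility of $\Phi_{\hbar}$ then follows.
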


\begin{proof}
Note that $[v_1,v_{-1}]=\hbar^{2}+\ggg'\hat{A}_{\hbar}$. We can modify $v_1$, $v_{-1}$ as in the proof of Theorem \ref{SQDWtm} and
get new elements $\bar{v}_1$, $\bar{v}_{-1}$ with $[\bar{v}_1,\bar{v}_{-1}]=\hbar^{2}$ and $\bar{v}_i=v_i+u_i$ for some $u_i  \in \ggg'^2\hat{A}_{\hbar}$ for $i=1,-1$. Set $V_1=\mathbb{C}\langle \bar{v}_{1},\bar{v}_{-1}\rangle$,  $ \hat{B}_1[[\hbar]]=\ker(\ad(\bar{v}_{1})) \cap \ker(\ad(\bar{v}_{-1}))$. By Theorem \ref{SQDWtm} we have the following isomorphism
$$\Phi_{1,\hbar}: \begin{cases} S[[V_1]][[\hbar]] \otimes \hat{B}_1[[\hbar]] &\rightarrow \hat{A}_{\hbar},  \text{ \quad  if $v_1 \in \ggg'_{\bar{0}}$;}   \\
 \bigwedge(V_1)[[\hbar]] \otimes \hat{B}_1[[\hbar]] &\rightarrow \hat{A}_{\hbar}, \text{ \quad if $v_1 \in \ggg_{\bar{1}}$}
\end{cases}$$
of quantum algebras.

By the arguments in the proof of Theorem \ref{SQDWtm},  for $i= \pm2,\ldots, \pm n$, there exist $u_i^{(1)} \in \ggg'^2\hat{A}_{\hbar}$  such that $u_i^{(1)}+v_i \in \hat{B}_1[[\hbar]]$.
It is easy to check that
$$ [u_i^{(1)}+v_i , u_j^{(1)}+v_j]-\delta_{i,-j}\hbar^2  \in \ggg'\hat{A}_{\hbar}.$$
Now we can accomplish the proof of the first statement by induction. The second statement follows from the construction of the isomorphism $\Phi_{\hbar}$.
\end{proof}

  Denote by $\hat{\mathcal{A}}_{1}$  the associative algebra from the quantum algebra $\mathbf{A}_\hbar^{\wedge_{0}}(V)\hat{\otimes}_{\mathbb{C}[[\hbar]]}\hat{B}_{\hbar}$ as in the equation \eqref{mul from qum}.
Construct  $\mathcal{A}_1^{\heartsuit}$ in the same way as  $\mathcal{A}^{\heartsuit}$, which is actually a subalgebra of $\hat{\mathcal{A}}_1$ from $A_1:=S[V]\otimes \bbc[\tilde\ggg_e]$. Recall that  $\bar{\mmm}$ is  the Lagrangian subspace of $V$  spanned by $v_{-1}, \dots, v_{-n}$. Construct $\mathcal {I}_1^{\heartsuit}(k)$ from $\bar{\mmm}$ in the same way  as $\mathcal {I}^{\heartsuit}(k)$ from $\mmm'$ in \S\ref{sec: 3.2}.

\begin{theorem} \label{realzation of W}
$\mathscr{W}$ and $ \mathcal{U}(\ggg,e)$ are isomorphic as associative algebras.
\end{theorem}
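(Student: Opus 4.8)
The plan is to identify $\mathscr{W}$ with $\mathcal{U}(\ggg,e)$ by passing through the Rees-algebra picture and the completions $\hat{A}_\hbar$, $\hat{B}_\hbar$, using the splitting isomorphism $\Phi_\hbar$ from \eqref{eqdecom} together with Lemma \ref{decom of qun unive le}. First I would use Lemma \ref{decom of qun unive le}: the images $\Phi_\hbar^{-1}(v_i)$ of a symplectic basis $v_{\pm1},\dots,v_{\pm n}$ of $V\subset\ggg'$ differ from $v_i$ by elements of $\mathrm{F}_{d_i}(\hat{A}_\hbar)\cap\ggg'^2\hat{A}_\hbar$, and $\Phi_\hbar^{-1}(x)-x\in V\hat{A}_\hbar$ for $x\in\ggg'$. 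This means that, under $\Phi_\hbar$, the subspace $\bar{\mmm}\subset V$ (spanned by $v_{-1},\dots,v_{-n}$) is carried to a subspace $\tilde{\mmm}$ of $\hat{\mathcal{A}}_1$ of exactly the shape demanded in Lemma \ref{equ of ideals}, so the ideals $\mathcal{I}_1^{\heartsuit}(k)$ generated (on one side) by $\bar{\mmm}$ match, up to compatibility of filtrations, the ideals $\mathcal{I}^{\heartsuit}(k)$ generated by $\mmm'$ inside $\mathcal{A}^{\heartsuit}$. Combined with Lemma \ref{compatible lemma}, which says the systems $\mathcal{U}\mmm'^k$ and $\mathcal{I}(k)=\mathcal{I}^{\heartsuit}(k)\cap\mathcal{U}$ are cofinal, I get that $\Phi_\hbar$ descends to an isomorphism of the completion $\hat{A}_\hbar$ with respect to the $M_{\chi,\hbar}$-adic topology — equivalently with respect to the topology defined by the $\mathcal{I}^{\heartsuit}(k)$ — and that under this identification the left ideal generated by $\tilde{\mmm}$ corresponds to the left ideal generated by $\mmm'$.

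Next I would take $\ad\mmm$-invariants, or equivalently the normalizer of that left ideal, on both sides. On the $\mathbf{A}_\hbar^{\wedge_0}(V)$-factor, the left ideal generated by $\bar{\mmm}$ has its normalizer inside $\mathbf{A}_\hbar^{\wedge_0}(V)$ equal to $\mathbb{C}[[\hbar]]$ (this is the standard Weyl-algebra Darboux computation: modding out the Weyl algebra $\mathbf{A}_\hbar(V)$ by the left ideal $\mathbf{A}_\hbar(V)\bar{\mmm}$ gives a Fock-type module on $S[[\bar{\mmm}^*]]$, whose endomorphisms commuting with $\bar{\mmm}$ reduce to scalars). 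Hence the $\ad\bar{\mmm}$-invariants of $\mathbf{A}_\hbar^{\wedge_0}(V)\hat{\otimes}_{\mathbb{C}[[\hbar]]}\hat{B}_\hbar$ modulo its left ideal generated by $\bar{\mmm}$ is precisely $\hat{B}_\hbar$. Transporting through $\Phi_\hbar$, this says $(\hat{A}_\hbar/\hat{A}_\hbar\mmm')^{\ad\mmm}\cong\hat{B}_\hbar$ as $\mathbb{C}[[\hbar]]$-algebras, $\mathbb{C}^\times$-equivariantly. I then restrict to the $\mathbb{C}^\times$-locally finite parts. On the left, using Lemma \ref{lemma3.4} to translate between $\hat{A}_\hbar$ and the Rees algebra $\mathbf{R}_\hbar(\mathcal{A})$ and $\mathbf{R}_\hbar(\mathcal{A}^{\heartsuit})$, together with Lemma \ref{compatible lemma}, I identify $\big((\hat{A}_\hbar/\hat{A}_\hbar\mmm')^{\ad\mmm}\big)_{\mathbb{C}^\times\text{-fin}}$ with the Rees algebra $\mathbf{R}_\hbar\!\big((\mathcal{U}/I_\chi)^{\ad\mmm}\big)=\mathbf{R}_\hbar(\mathcal{U}(\ggg,e))$; on the right it is by definition $(\hat{B}_\hbar)_{\mathbb{C}^\times\text{-fin}}$. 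Finally specializing $\hbar\mapsto1$, i.e. modding out by $(\hbar-1)$, the left side becomes $\mathcal{U}(\ggg,e)$ and the right side becomes $\mathscr{W}$, giving the desired isomorphism of associative algebras. As a byproduct, comparing associated graded objects recovers $\gr\mathscr{W}\cong S[\ggg_e]$ (with the extra $\Theta$-factor in the odd case), i.e. the PBW theorem.

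The main obstacle I expect is the bookkeeping needed to show that taking $\ad\mmm$-invariants commutes with completion and with passing to $\mathbb{C}^\times$-locally finite parts — i.e. that $\big((\hat{A}_\hbar/\hat{A}_\hbar\mmm')^{\ad\mmm}\big)_{\mathbb{C}^\times\text{-fin}}$ really is the completion of $\mathbf{R}_\hbar(\mathcal{U}(\ggg,e))$ rather than something larger. Concretely this amounts to: (a) checking that the normalizer of the left ideal in the completed tensor product is exactly $\hat{B}_\hbar$ and no convergence phenomena introduce extra invariants; (b) checking, via Lemmas \ref{equ of ideals} and \ref{compatible lemma}, that the two filtrations (the $\mmm'$-adic one and the $\mathcal{I}^{\heartsuit}(k)$ one) define the same topology so that the completions agree and $\mathbb{C}^\times$-finiteness can be controlled degree by degree; and (c) verifying that $\Phi_\hbar$ and all the intermediate modifications of the $v_i$ can be performed $\mathbb{C}^\times$-equivariantly with the Kazhdan grading, which is already guaranteed by Theorem \ref{SQDWtm} and Lemma \ref{decom of qun unive le} but must be tracked. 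Once these compatibilities are in place, the $\hbar\mapsto1$ specialization is formal and the isomorphism follows. One should also note the parity subtlety when $\dim\ggg(-1)_{\bar 1}$ is odd: then $V=\bar{\mmm}^*\oplus\bar{\mmm}$ is taken inside $[\ggg,f]$ with $\bar{\mmm}$ isotropic subject to $\langle\Theta,\bar{\mmm}\rangle=0$, so $\tilde{\ggg}_e=\ggg_e\oplus\mathbb{C}\Theta$ and the argument goes through verbatim with $\hat{B}_\hbar=S[[\tilde{\ggg}_e,\hbar]]$.
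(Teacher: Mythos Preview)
Your proposal is correct and uses the same ingredients as the paper --- the splitting $\Phi_\hbar$, Lemma \ref{decom of qun unive le} to control how $\Phi_\hbar$ moves the $v_i$, Lemma \ref{equ of ideals} to match the left ideals, and Lemma \ref{lemma3.4} to pass between the completed quantum picture and Rees algebras. The one substantive difference is the order of operations. You take $\ad\mmm$-invariants at the completed $\hbar$-level first, obtaining $(\hat{A}_\hbar/\hat{A}_\hbar\mmm')^{\ad\mmm}\cong\hat{B}_\hbar$, and only afterwards pass to $\mathbb{C}^\times$-finite parts and specialize $\hbar\to1$; this is exactly what forces you to worry about obstacle (a). The paper reverses this: it first restricts $\Phi_\hbar$ to $\mathbb{C}^\times$-finite parts (Lemma \ref{lemma3.4} identifies both sides with Rees algebras $\mathbf{R}_\hbar(\mathcal{A}^\heartsuit)$ and $\mathbf{R}_\hbar(\mathcal{A}_1^\heartsuit)$), then sets $\hbar=1$ to get an isomorphism $\Phi^\heartsuit:\mathcal{A}_1^\heartsuit\to\mathcal{A}^\heartsuit$ of honest associative algebras, and only then uses Lemmas \ref{equ of ideals} and \ref{decom of qun unive le} to show $\Phi^\heartsuit(\mathcal{I}_1^\heartsuit(1))=\mathcal{I}^\heartsuit(1)$. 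At that point the identifications $\mathcal{U}(\ggg,e)\cong(\mathcal{A}^\heartsuit/\mathcal{I}^\heartsuit(1))^{\mathcal{I}^\heartsuit(1)}$ and $\mathscr{W}\cong(\mathcal{A}_1^\heartsuit/\mathcal{I}_1^\heartsuit(1))^{\mathcal{I}_1^\heartsuit(1)}$ are immediate from the definitions, and no commutation of invariants with completion needs to be checked. Your route works, but the paper's ordering is cleaner precisely because it dissolves your obstacle (a).
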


The proof of the above theorem is basically somewhat as in the non-super case in \cite{Lo1}. We recall it for the reader's convenience.

\begin{proof}
By Lemma \ref{lemma3.4} we have the following isomorphisms
$$
(\hat{A}_\hbar)_{\mathbb{C}^{\times}\text{-fin}}\cong \mathbf{R}_{\hbar}(\mathcal{A}^{\heartsuit}); \;\sum_i f_i \hbar^{j}\mapsto f_i \hbar^{i+j}$$
and
$$(\mathbf{A}_\hbar^{\wedge_{0}}(V)\hat{\otimes}_{\mathbb{C}[[\hbar]]} \hat{B}_{\hbar})_{\mathbb{C}^{\times}\text{-fin}}\cong \mathbf{R}_{\hbar}(\mathcal{A}_{1}^{\heartsuit}); \; \sum_i f_i \hbar^{j}\mapsto f_i \hbar^{i+j}$$
of $\mathbb{C}[\hbar]$ algebras.
Thus restricting $\Phi_{\hbar}$ to the $\mathbb{C}^{\times}$-local finite part of $\hat{A}_\hbar$, we obtain an isomorphism $\Phi_{\hbar}^{\heartsuit}: \mathbf{R}_{\hbar}(\mathcal{A}_{1}^{\heartsuit})\rightarrow \mathbf{R}_{\hbar}(\mathcal{A}^{\heartsuit} ) $ of $\mathbb{C}[\hbar]$-algebras.

Taking $\hbar=1$, we have a $\mathbb{C}^{\times}$-equivariant  isomorphism $\Phi^{\heartsuit} :\mathcal{A}^{\heartsuit} \rightarrow \mathcal{A}_{1}^{\heartsuit}$ of algebras (with respect to  ``$\circ$"). Lemmas \ref{equ of ideals} and \ref{decom of qun unive le} imply $\Phi^{\heartsuit}(\mathcal{I}_1^{\heartsuit}(1)) \subset \mathcal{I}^{\heartsuit}(1)$ and $(\Phi^{\heartsuit})^{-1}(\mathcal{I}^\heartsuit(1)) \subset \mathcal{I}_1^{\heartsuit}(1)$. Hence $\Phi^{\heartsuit}(\mathcal{I}_1^{\heartsuit}(1))=\mathcal{I}^{\heartsuit}(1)$.

From the definition (\ref{def: u(g,e)}), it follows that  $\mathcal{U}(\mathfrak{g},e)\cong(\mathcal{A}^{\heartsuit}/\mathcal{I}^\heartsuit(1))^{\mathcal{I}^\heartsuit(1)}$.
By the construction of $\mathscr{W}$, we know that $(\mathcal{A}_1^{\heartsuit}/\mathcal{I}^\heartsuit_1(1))^{\mathcal{I}^\heartsuit_1(1)}\cong\mathscr{W}$.

Thus finally we have
$$\mathcal{U}(\mathfrak{g},e)\cong(\mathcal{A}^{\heartsuit}/\mathcal{I}^\heartsuit(1))^{\mathcal{I}^\heartsuit(1)}
\cong(\mathcal{A}_1^{\heartsuit}/\mathcal{I}^\heartsuit_1(1))^{\mathcal{I}^\heartsuit_1(1)}\cong\mathscr{W}.$$
\end{proof}

 \textit{ Proof of  Theorem \ref{realzn of w-alge thm}}.
By Lemma \ref{compatible lemma} and the proof of Theorem \ref{realzation of W} the systems $\Phi^{\heartsuit}(\mathbf{A}(V)\otimes \mathscr{W}/(\mathbf{A}(V)\otimes \mathscr{W})\bar{\mmm}^{k})$ and $\mathcal{U}\mmm'^{k}$ are compatible.
Thus the morphism $\Phi^{\heartsuit}$ can be extended to an isomorphism
$$\Phi: \mathbf{A}(V)_{\bar{\mmm}}^{\wedge}\otimes \mathscr{W}\cong\mathcal{U}^{\wedge}_{\mmm'}$$
of topological algebras. $\hfill\Box$

\vskip5pt
As a corollary to Theorem \ref{realzation of W} along with the arguments in \S\ref{subsec con of w}, we reobtain the following result which is a main result of \cite{ZS1}.
\begin{corollary} (\cite[Theorem 0.1]{ZS1}) Associated with a basic classical Lie superalgebra $\ggg$ and a nilpotent element $e\in \ggg_\bz$, the following statements hold
\begin{itemize}
\item[(1)] $\gr \mathcal{U}(\ggg,e)\cong S(\ggg_e)$ as $\bbc$-algebras when $\dim\ggg(-1)_{\bo}$ is even.

\item[(2)] $\gr \mathcal{U}(\ggg,e)\cong S(\ggg_e)\otimes \bbc[\Theta]$ as $\bbc$-algebras when $\dim\ggg(-1)_{\bo}$ is odd.
\end{itemize}
\end{corollary}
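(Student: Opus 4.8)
The plan is to deduce the statement directly from Theorem~\ref{realzation of W} together with the description of $\mathscr{W}$ obtained in \S\ref{subsec con of w}; the only real point is to upgrade the algebra isomorphism $\mathcal{U}(\ggg,e)\cong\mathscr{W}$ to an isomorphism of \emph{filtered} algebras for the Kazhdan filtrations, and then pass to associated graded objects.

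First I would recall that the isomorphism in Theorem~\ref{realzation of W} is induced by $\Phi^{\heartsuit}\colon\mathcal{A}^{\heartsuit}\to\mathcal{A}_1^{\heartsuit}$, obtained from the $\mathbb{C}[\hbar]$-algebra isomorphism $\Phi_{\hbar}^{\heartsuit}$ of Rees algebras by specializing $\hbar=1$, and is therefore $\mathbb{C}^{\times}$-equivariant for the Kazhdan action. Under the identifications of Lemma~\ref{lemma3.4} the $\mathbb{C}^{\times}$-weight records exactly the Kazhdan filtration degree, so the equivariant quantum isomorphism $\Phi_{\hbar}$ of \eqref{eqdecom} (equivalently $\Phi^{\heartsuit}$) is automatically compatible with the filtrations $\F_\bullet$ on both sides. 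Since the proof of Theorem~\ref{realzation of W} also shows $\Phi^{\heartsuit}(\mathcal{I}_1^{\heartsuit}(1))=\mathcal{I}^{\heartsuit}(1)$, it induces a filtered isomorphism
$$\mathcal{U}(\ggg,e)\cong(\mathcal{A}^{\heartsuit}/\mathcal{I}^{\heartsuit}(1))^{\mathcal{I}^{\heartsuit}(1)}\xrightarrow{\ \sim\ }(\mathcal{A}_1^{\heartsuit}/\mathcal{I}_1^{\heartsuit}(1))^{\mathcal{I}_1^{\heartsuit}(1)}\cong\mathscr{W},$$
and hence $\gr\mathcal{U}(\ggg,e)\cong\gr\mathscr{W}$ as graded $\bbc$-algebras. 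One should also note that the filtration on $\mathcal{U}(\ggg,e)$ coming from $\mathcal{A}^{\heartsuit}$ coincides with the usual Kazhdan filtration on the $W$-superalgebra, which is standard.

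It then remains to identify $\gr\mathscr{W}$, already sketched in \S\ref{subsec con of w}. Using the $\mathbb{C}^{\times}$-stable splitting $\ggg=V\oplus\tilde{\ggg}_e$ one has $(\hat{B}_{\hbar})_{\mathbb{C}^{\times}\text{-fin}}=S[\tilde{\ggg}_e,\hbar]$ as a superspace, with $\ast$ quantizing the induced even Poisson bracket, and $\mathscr{W}=S[\tilde{\ggg}_e,\hbar]/(\hbar-1)$. Filtering by the $\hbar$-degree (equivalently the Kazhdan degree), the star product reduces on the associated graded to the commutative product, so $\gr\mathscr{W}\cong S[\tilde{\ggg}_e]$. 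When $\dim\ggg(-1)_{\bo}$ is even, $\tilde{\ggg}_e=\ggg_e$, giving (1). When $\dim\ggg(-1)_{\bo}$ is odd, $\tilde{\ggg}_e=\ggg_e+\bbc\Theta$ with $\Theta$ odd, so $S[\tilde{\ggg}_e]=S[\ggg_e]\otimes\bbc[\Theta]$ as graded algebras, giving (2).

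I expect the genuinely delicate point to be the first step: spelling out that $\mathbb{C}^{\times}$-equivariance of $\Phi_{\hbar}$ (resp.\ $\Phi^{\heartsuit}$) forces compatibility with the Kazhdan filtrations on both the $\mathcal{U}$-side and the $\mathscr{W}$-side, so that the isomorphism descends to the associated graded level. Once this bookkeeping is in place, the computation of $\gr\mathscr{W}$ is immediate from the PBW discussion in \S\ref{subsec con of w}, and the corollary follows.
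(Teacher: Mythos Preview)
Your proposal is correct and follows exactly the approach the paper intends: the corollary is stated immediately after Theorem~\ref{realzation of W} with the remark that it follows from that theorem together with the arguments in \S\ref{subsec con of w}, and you have simply made explicit the one point the paper leaves implicit, namely that the isomorphism $\Phi^{\heartsuit}$ is $\mathbb{C}^{\times}$-equivariant and hence filtered for the Kazhdan filtrations. Once that is granted, the identification $\gr\mathscr{W}\cong S[\tilde{\ggg}_e]$ from \S\ref{subsec con of w} gives the result exactly as you describe.
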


\begin{remark} In \cite{ZS3}, the authors introduced the so-called refined $W$-algebra $\mathcal{U}'(\ggg,e)$, which is by definition equal to $(\mathcal{U}\slash I_\chi)^{\ad \overline\mmm} $ with $\overline\mmm$ being either $\mmm$ itself or its one-dimensional extension (the occurrence is dependent on the parity of $\dim\ggg(-1)_\bo$, taking the former  if even, and  taking the latter  if odd).

 They showed that $\mathcal{U}'(\ggg,e)$ is a subalgebra of $\mathcal{U}(\ggg,e)$ and satisfies that $\gr\mathcal{U}'(\ggg,e)\cong S(\ggg_e)$. The arguments in the present paper are still available to this  refined version of the finite $W$-algebras, by some  correspondingly modifying in  the construction.
\end{remark}

\subsection{A relation between finite $W$-superalgebras and W-algebras}\label{rel of W's}

Let $\ggg=\ggg_{\bar{0}}\oplus \ggg_{\bar{1}}$ be a basic Lie superalgebra  and $e \in \ggg_{\bar{0}}$ be a nilpotent.
The construction of $\mathscr{W}$ in the present paper is inspired by its non-super counterpart given in \cite{Lo3}. We briefly describe it as below.

Let $\hat{A}_{0,\hbar}=S[\ggg_{\bar{0}}]_{\hbar}^{\wedge_\chi}$ be the $\mathbb{C}^{\times}$-equivariant quantization of $\hat{A}_{0}=S[\ggg_{\bar{0}}]^{\wedge_\chi}$
constructed in the same way as in \S\ref{sec: 3.2}. Restriction of the Poisson bivector on $V_{\bar{0}}=\mathcal{T}^*(G_{\bar{0}}\cdot \chi )$ is non-degenerate.  By the  eqDW theorem we have a $\bbc^{\times}$-equivariant isomorphism
$$\Phi_{0,\hbar}: \hat{A}_{0,\hbar} \rightarrow S[[V_{\bar{0}}]]_{\hbar} \otimes \hat{B}_{0,\hbar}.$$
of quantum algebras. Here the notations in the above isomorphism have the same meaning as in the previous sections. Now set
$$ \mathscr{W}_{0}=\frac{(\hat{B}_{0,\hbar})_{\mathbb{C}^{\times}\text{-fin}}}{(\hbar-1)(\hat{B}_{0,\hbar})_{\mathbb{C}^{\times}\text{-fin}}}.$$

By the same argument as in the proof of Theorem \ref{realzation of W}, we have that $\mathscr{W}_{0}$ is isomorphic to the finite $W$-algebra $\mathcal{U}(\ggg_{\bar{0}},e)$. The above fact is first proved in \cite{Lo2} relying on the main result of \cite{Lo2}, which is based on the Fedosov quantization and algebraic invariant theory. Those theories are not necessary to the arguments in the present paper.

Applying  Theorem \ref{SQDWtm} to $\hat{A}_{\hbar}=S[\ggg]^{\wedge_{\chi}}$ and $V_{\bar{0}}$, we have the following isomorphism
$$\hat{A}_{\hbar} \rightarrow S[[V_{\bar{0}}]]_{\hbar} \otimes \hat{C}_{\hbar}$$
of quantum algebras. Here $\hat{C}_{\hbar}$ is defined similarly to $\hat B_\hbar$ in  Theorem \ref{SQDWtm}. Denote by
$$\mathcal{A}_{\ddag}=\frac{(\hat{C}_{\hbar})_{\mathbb{C}^{\times}\text{-fin}}}{(\hat{C}_{\hbar})_{\mathbb{C}^{\times}\text{-fin}}(\hbar-1)}.$$

The following theorem relates the finite W-algebras and $W$-superalgebras.
\begin{theorem}\label{thm rel of W's}
\begin{itemize}

\item[(\rmnum{1})]We have an embedding $\mathscr{W}_{0} \hookrightarrow \mathcal{A}_{\ddag}$. The later is finitely generated over the former.
\item[(\rmnum{2})] There exists an isomorphism
$$ \Psi :  \mathrm{Cl}(V_{\bar{1}})\otimes \mathscr{W}\longrightarrow \mathcal{A}_{\ddag}$$
of associative  superalgebras. Here $V_{\bar{1}}$ is the odd part of $V=\bar{\mmm} \oplus \bar{\mmm}^*$, $\mathrm{Cl}(V_{\bar{1}})$ is the Clifford algebra from $(V_{\bar{1}},\chi([\cdot,\cdot]))$.
\end{itemize}
\end{theorem}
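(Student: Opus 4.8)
The plan is to obtain the isomorphism $\Psi$ of part~(\rmnum{2}) by running the \emph{super quantum equivariant DW theorem} (Theorem~\ref{SQDWtm}) for the pair $(\hat A_{\hbar},V)$ in two stages, first splitting off the even symplectic directions and then the odd ones, and then to deduce part~(\rmnum{1}) from this together with a comparison of associated graded algebras. First I would record the structure of $V$. The form $\langle x,y\rangle=\chi([x,y])$ on $V=\bar\mmm\oplus\bar\mmm^{*}$ is even, so $V$ splits $\mathbb{Z}_{2}$- and $\mathbb{C}^{\times}$-equivariantly as $V=V_{\bar{0}}\oplus V_{\bar{1}}$ with $V_{\bar{0}}=[\ggg_{\bar{0}},f]=\mathcal{T}^{*}(G_{\bar{0}}\cdot\chi)$ an even symplectic subspace and $V_{\bar{1}}$ the odd part carrying the non-degenerate symmetric form $\chi([\cdot,\cdot])$, and moreover $\langle V_{\bar{0}},V_{\bar{1}}\rangle=0$. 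Correspondingly $\mathbf{A}_{\hbar}(V)=\mathbf{A}_{\hbar}(V_{\bar{0}})\,\hat\otimes_{\mathbb{C}[[\hbar]]}\,\mathbf{A}_{\hbar}(V_{\bar{1}})$, where $\mathbf{A}_{\hbar}(V_{\bar{1}})$ is a free $\mathbb{C}[[\hbar]]$-module of finite rank whose specialization at $\hbar=1$ is $\mathrm{Cl}(V_{\bar{1}})$.

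Next I would carry out the construction in the proof of Theorem~\ref{SQDWtm} for $(\hat A_{\hbar},V)$, organizing the successive peeling-off of symplectic pairs so that all pairs lying in $V_{\bar{0}}$ are treated before all pairs lying in $V_{\bar{1}}$. The even stage manipulates only $V_{\bar{0}}$ and its higher-order modifications, hence coincides with the construction applied to $(\hat A_{\hbar},V_{\bar{0}})$, and so produces precisely the isomorphism $\hat A_{\hbar}\cong \mathbf{A}_{\hbar}(V_{\bar{0}})^{\wedge_{0}}\,\hat\otimes\,\hat C_{\hbar}$ defining $\mathcal{A}_{\ddag}$, with $\hat C_{\hbar}$ the super-centralizer of (the image of) $V_{\bar{0}}$. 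Since $\langle V_{\bar{0}},V_{\bar{1}}\rangle=0$, the Poisson bivector of the completed polynomial superalgebra underlying $\hat C_{\hbar}$ stays non-degenerate on the image of $V_{\bar{1}}$, so the odd stage is an application of Theorem~\ref{SQDWtm} to $(\hat C_{\hbar},V_{\bar{1}})$ and yields an isomorphism of quantum superalgebras
$$\Psi_{\hbar}:\ \mathbf{A}_{\hbar}(V_{\bar{1}})^{\wedge_{0}}\,\hat\otimes_{\mathbb{C}[[\hbar]]}\,\hat B_{\hbar}\ \xrightarrow{\ \sim\ }\ \hat C_{\hbar},$$
where $\hat B_{\hbar}$ is the super-centralizer of $V_{\bar{0}}\oplus V_{\bar{1}}=V$, which is exactly the factor $\hat B_{\hbar}$ of \eqref{eqdecom} used to build $\mathscr{W}$ (equivalently: one simply \emph{chooses} the decomposition \eqref{eqdecom} to factor through $\hat C_{\hbar}$). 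Applying the operation of taking $\mathbb{C}^{\times}$-finite parts and then setting $\hbar=1$ to $\Psi_{\hbar}$ should give $\Psi$: because $\mathbf{A}_{\hbar}(V_{\bar{1}})^{\wedge_{0}}$ is of finite rank over $\mathbb{C}[[\hbar]]$ and $\mathbb{C}[\hbar]$-spanned by $\mathbb{C}^{\times}$-homogeneous ordered monomials its $\mathbb{C}^{\times}$-finite part is $\mathbf{A}_{\hbar}(V_{\bar{1}})$; because $\tilde\ggg_{e}$ has positive Kazhdan grading, $(\hat B_{\hbar})_{\mathbb{C}^{\times}\text{-fin}}=S[\tilde\ggg_{e},\hbar]$ as before; and since one tensor factor is finite over $\mathbb{C}[\hbar]$, the completion does not enlarge the $\mathbb{C}^{\times}$-finite part, so $(\hat C_{\hbar})_{\mathbb{C}^{\times}\text{-fin}}\cong\mathbf{A}_{\hbar}(V_{\bar{1}})\otimes_{\mathbb{C}[\hbar]}(\hat B_{\hbar})_{\mathbb{C}^{\times}\text{-fin}}$, the algebra structure being the graded tensor product coming from $\ast$. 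Specializing $\hbar=1$ then gives the asserted isomorphism $\mathcal{A}_{\ddag}\cong\mathrm{Cl}(V_{\bar{1}})\otimes\mathscr{W}$ of associative superalgebras, i.e.\ part~(\rmnum{2}).

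For part~(\rmnum{1}), the inclusion $\mathcal{U}(\ggg_{\bar{0}})\hookrightarrow\mathcal{U}(\ggg)$ is compatible with the Kazhdan actions and the $M_{\chi}$-adic completions, hence induces an embedding of quantum superalgebras $\hat A_{0,\hbar}\hookrightarrow\hat A_{\hbar}$; choosing the even-stage construction above so that it restricts on $\hat A_{0,\hbar}$ to the construction defining $\mathscr{W}_{0}$, the super-centralizer $\hat B_{0,\hbar}$ of $V_{\bar{0}}$ in $\hat A_{0,\hbar}$ embeds as a filtered subalgebra of $\hat C_{\hbar}$, and passing to $\mathbb{C}^{\times}$-finite parts and $\hbar=1$ produces the embedding $\mathscr{W}_{0}\hookrightarrow\mathcal{A}_{\ddag}$. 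Finite generation I would get by comparing associated graded algebras: $\gr\mathscr{W}_{0}\cong S((\ggg_{\bar{0}})_{e})$, while part~(\rmnum{2}) gives $\gr\mathcal{A}_{\ddag}\cong S(\ggg_{e})\otimes\bigwedge(V_{\bar{1}})$ (with an extra exterior factor $\bbc[\Theta]$ when $\dim\ggg(-1)_{\bo}$ is odd); since $\ggg_{e}=(\ggg_{\bar{0}})_{e}\oplus(\ggg_{\bar{1}})_{e}$ with odd summand $(\ggg_{\bar{1}})_{e}$, this is free of finite rank over the subalgebra $S((\ggg_{\bar{0}})_{e})=\gr\mathscr{W}_{0}$, and the standard lifting argument for filtered algebras shows $\mathcal{A}_{\ddag}$ is a finitely generated $\mathscr{W}_{0}$-module.

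The main obstacle I expect is the bookkeeping that makes the ``two-stage'' idea rigorous: one must verify that the iterative construction in the proof of Theorem~\ref{SQDWtm} can be reorganized by parity, and that the freedom in the successive higher-order modifications can be fixed \emph{coherently} across the three applications so that $\hat C_{\hbar}$, $\hat B_{\hbar}$ and $\hat B_{0,\hbar}$ are literally the objects entering the definitions of $\mathcal{A}_{\ddag}$, $\mathscr{W}$ and $\mathscr{W}_{0}$. The remaining points---that forming $\mathbb{C}^{\times}$-finite parts commutes with the relevant completed tensor product and with the specialization $\hbar=1$, and that the embedding $\hat B_{0,\hbar}\hookrightarrow\hat C_{\hbar}$ is strictly filtered---are routine but rely on the finiteness of $\mathbf{A}_{\hbar}(V_{\bar{1}})$ and the positivity of the Kazhdan grading on $\tilde\ggg_{e}$.
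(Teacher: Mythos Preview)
Your proposal is correct and follows essentially the same route as the paper: order the iterative construction of Theorem~\ref{SQDWtm} so that all even symplectic pairs in $V_{\bar 0}$ are peeled off before the odd ones in $V_{\bar 1}$, observe that the even-stage corrections stay inside $\hat A_{0,\hbar}$ so the same $\bar v_i$'s serve for both $\Phi_{0,\hbar}$ and the first stage of $\Phi_{\hbar}$, then factor $\hat C_{\hbar}\cong \mathrm{Cl}(V_{\bar 1})_{\hbar}\otimes \hat B_{\hbar}$ and pass to $\mathbb{C}^{\times}$-finite parts with $\hbar=1$; finite generation is read off from the associated graded exactly as you outline (the paper writes this as $\gr\mathcal{A}_{\ddag}\cong S[(\ggg_{\bar 0})_e]\otimes\bigwedge(\ggg_{\bar 1})$, which agrees with your $S(\ggg_e)\otimes\bigwedge(V_{\bar 1})$ up to the parity split $\ggg_{\bar 1}=(\ggg_{\bar 1})_e\oplus V_{\bar 1}$ and the $\Theta$-bookkeeping). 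The only point the paper leaves implicit and that you rightly flag as the real work is the coherence of choices across the three applications of the DW construction.
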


\begin{proof}

(\rmnum{1}) Let $v_{1},v_{-1} ,\ldots, v_{l},v_{-l}$ (\textit{resp.} $v_{l+1},v_{-l-1},\ldots, v_{n},v_{-n}$) be a basis of $V_{\bar{0}}$( \textit{resp} $V_{\bar{1}}$) with $\omega(v_i,v_{j})=\delta_{i,-j}$.  We construct $\bar{v}_{1}, \bar{v}_{-1},\ldots, \bar{v}_{n}, \bar{v}_{-n}$ as in the proof of Lemma \ref{decom of qun unive le} by the indicated order. Since we have $\bar{v}_{i} \in \hat{B}_{0,\hbar}$ for all $i=\pm 1 ,\ldots, \pm l$,  both of $\Phi_{\hbar}$ and $\Phi_{0,\hbar}$ may be given by those $\bar{v}_{i}$s as before. Thus we have the first statement of (\rmnum{1}).  Since $\bar{v}_{i} \in \hat{C}_{\hbar}$ for all $i=\pm (l+1),\dots, \pm n$, we have the following isomorphism
$$ \Psi_{\hbar}: \mathrm{Cl}(V_{\bar{1}})_{\hbar}\otimes \hat{B}_{\hbar} \longrightarrow \hat{C}_{\hbar}$$
of quantum algebras by a similar argument as in the proof of Theorem \ref{SQDWtm}. Now the statement (\rmnum{2}) follows from taking $\mathbb{C}^{\times}$-local finite part and specializing $\hbar$ to $1$. Note that $\gr(\mathcal{A}_{\ddag}$) is isomorphic to  $S[(\mathfrak{g}_{\bar{0}})_e] \otimes \bigwedge(\mathfrak{g}_{\bar{1}})$ and $\gr(\mathscr{W}_{0})$ is isomorphic to  $S[(\mathfrak{g}_{\bar{0}})_e]$. Here $(\mathfrak{g}_{\bar{0}})_e$ is the centralizer of $e$ in $\mathfrak{g}_{\bar{0}}$. Thus the
last statement of (\rmnum{1}) follows.
\end{proof}

\begin{corollary} \label{cor 3.12}
The finite $W$-superalgebra $\mathscr{W}$ has finite-dimensional representations.
For a regular nilpotent element  $e \in \mathfrak{g}_{\bar{0}}$,  any irreducible representation of $\mathscr{W}$ is finite-dimensional.
\end{corollary}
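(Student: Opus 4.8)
The plan is to deduce both statements from Theorem~\ref{thm rel of W's} by transferring the question to the purely even finite $W$-algebra $\mathscr{W}_{0}\cong\mathcal{U}(\ggg_{\bar{0}},e)$, where the corresponding facts are already known. I would combine three ingredients: (a) for a reductive Lie algebra and an arbitrary nilpotent $e$ the finite $W$-algebra $\mathcal{U}(\ggg_{\bar{0}},e)$ admits a nonzero finite-dimensional irreducible module (see \cite{Lo1}); (b) by Theorem~\ref{thm rel of W's}(\rmnum{1}) and the identifications $\gr\mathcal{A}_{\ddag}\cong S[(\ggg_{\bar{0}})_e]\otimes\bigwedge(\ggg_{\bar{1}})$, $\gr\mathscr{W}_{0}\cong S[(\ggg_{\bar{0}})_e]$ established there, $\mathcal{A}_{\ddag}$ is in fact a \emph{free} $\mathscr{W}_{0}$-module of finite rank $2^{\dim\ggg_{\bar{1}}}$ on either side, obtained by lifting a homogeneous $\gr\mathscr{W}_{0}$-basis of $\gr\mathcal{A}_{\ddag}$; (c) by Theorem~\ref{thm rel of W's}(\rmnum{2}), $\Psi\colon\mathrm{Cl}(V_{\bar{1}})\otimes\mathscr{W}\xrightarrow{\ \sim\ }\mathcal{A}_{\ddag}$, where $\mathrm{Cl}(V_{\bar{1}})$ is a finite-dimensional split semisimple $\mathbb{C}$-algebra.

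For the first assertion I would take, using (a), a nonzero finite-dimensional $\mathscr{W}_{0}$-module $M_{0}$. Since by (b) $\mathcal{A}_{\ddag}$ is free of finite rank $r$ as a right $\mathscr{W}_{0}$-module, the induced module $\mathcal{A}_{\ddag}\otimes_{\mathscr{W}_{0}}M_{0}$ is isomorphic to $M_{0}^{\oplus r}$ as a vector space, hence is a nonzero finite-dimensional left $\mathcal{A}_{\ddag}$-module. Restricting it along the embedding $\mathscr{W}\hookrightarrow\mathcal{A}_{\ddag}$, $w\mapsto\Psi(1\otimes w)$, then yields a nonzero finite-dimensional $\mathscr{W}$-module, which proves the claim.

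For the second assertion I would first invoke Kostant's theorem: for $e$ regular in the reductive Lie algebra $\ggg_{\bar{0}}$, the principal finite $W$-algebra $\mathscr{W}_{0}\cong\mathcal{U}(\ggg_{\bar{0}},e)$ is commutative (indeed isomorphic to $Z(\mathcal{U}(\ggg_{\bar{0}}))$), and it is an affine $\mathbb{C}$-algebra. By (b), $\mathcal{A}_{\ddag}$ is free of rank $r$ as a left $\mathscr{W}_{0}$-module; right multiplication then gives an embedding of $\mathbb{C}$-algebras $\mathcal{A}_{\ddag}^{\mathrm{op}}\hookrightarrow\End_{\mathscr{W}_{0}}(\mathcal{A}_{\ddag})\cong M_{r}(\mathscr{W}_{0})$, so $\mathcal{A}_{\ddag}$ satisfies a polynomial identity; being module-finite over the affine algebra $\mathscr{W}_{0}$, it is itself affine. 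By Kaplansky's theorem together with the Nullstellensatz for affine PI algebras, every irreducible $\mathcal{A}_{\ddag}$-module is finite-dimensional over $\mathbb{C}$. Finally, since $\mathrm{Cl}(V_{\bar{1}})$ is finite-dimensional and split semisimple, the irreducible modules of $\mathcal{A}_{\ddag}\cong\mathrm{Cl}(V_{\bar{1}})\otimes\mathscr{W}$ are exactly the $S\otimes P$ with $S$ an (automatically finite-dimensional) irreducible $\mathrm{Cl}(V_{\bar{1}})$-module and $P$ an irreducible $\mathscr{W}$-module; hence $\dim_{\mathbb{C}}P<\infty$ for every irreducible $\mathscr{W}$-module $P$.

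The step I expect to demand the most care is (b): transferring freeness from $\gr\mathcal{A}_{\ddag}$ over $\gr\mathscr{W}_{0}$ to $\mathcal{A}_{\ddag}$ over $\mathscr{W}_{0}$ requires checking that the embedding $\mathscr{W}_{0}\hookrightarrow\mathcal{A}_{\ddag}$ of Theorem~\ref{thm rel of W's}(\rmnum{1}) is strictly compatible with the filtrations and that on associated graded it becomes the inclusion $S[(\ggg_{\bar{0}})_e]\hookrightarrow S[(\ggg_{\bar{0}})_e]\otimes\bigwedge(\ggg_{\bar{1}})$; both are visible from the explicit construction used in the proof of that theorem, after which lifting a homogeneous basis is standard. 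For the second assertion the essential input is the commutativity of the principal finite $W$-algebra $\mathscr{W}_{0}$, i.e. Kostant's identification with $Z(\mathcal{U}(\ggg_{\bar{0}}))$; granting that, the PI-theoretic argument is routine. I would also remark that the existence statement in (a) is usually phrased for semisimple $\ggg_{\bar{0}}$ but extends to reductive $\ggg_{\bar{0}}$ immediately by twisting with characters of the center.
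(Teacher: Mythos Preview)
Your argument is correct and is essentially the paper's own (very terse) proof with the details filled in: both parts are deduced from Theorem~\ref{thm rel of W's} via the module-finiteness of $\mathcal{A}_{\ddag}$ over $\mathscr{W}_{0}$, together with Kostant's identification $\mathscr{W}_{0}\cong Z(\mathcal{U}(\ggg_{\bar{0}}))$ in the principal case. Your upgrade from finite generation to freeness (via the associated-graded comparison already recorded in the proof of Theorem~\ref{thm rel of W's}) is slightly more than the paper asserts but is the natural way to ensure the induced module in part one is nonzero, and the PI/Kaplansky argument you spell out for part two is exactly what the paper's one-line proof leaves to the reader.
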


\begin{proof}
The first statement follows from the fact that $\mathscr{W}$ has finite dimensional modules.
Since $\mathscr{W}_{0}$ is isomorphic to the center of $\mathcal{U}(\mathfrak{g}_{\bar{0}})$, the second statement of Theorem \ref{thm rel of W's} (\rmnum{1}) implies that of the corollary.
\end{proof}

\begin{remark} \label{rem: clue to finite mod}
The statement (\rmnum{2}) of  Theorem \ref{thm rel of W's} provides a bijection between the sets of finite-dimensional irreducible modules of $\mathcal{A}_{\ddag}$ and $\mathscr{W}$. For some special $e \in \ggg$, we will give a classification of $\text{Irr}^{\text{fin}}(\mathscr{W})$ by using such a correspondence in a forthcoming work \cite{SXZ}.
\end{remark}

\section{On representations of $\mathscr{W}$}

\subsection{Skryabin's equivalence }

We say that a $\mathcal{U}$ module is  Whittaker if $x$ acts locally nilpotent on $M$ for all $x \in \mmm'$.
 For $Q_{\chi}=\mathcal{U}/\mathcal{U}\mmm'$, we have the following functors between the category $\mathcal{C}$ of Whittaker $\mathcal{U}$-modules  and the category of $\mathcal{U}(\mathfrak{g},e)$-modules.
\begin{align*}
 &\mathcal{C} \longrightarrow \mathcal{U}(\mathfrak{g},e)\mathrm{-mod};\; M \mapsto  M^{\mmm'},\\
 &\mathcal{U}(\mathfrak{g},e)\mathrm{-mod}\longrightarrow \mathcal{C}; \; N \mapsto \mathcal{S}(N): =Q_{\chi} \otimes N.
\end{align*}

\begin{theorem}\label{Skryabin's equivalence}
The functors given above are a pair of quasi-inverse equivalence.
\end{theorem}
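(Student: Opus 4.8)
The plan is to deduce the super Skryabin equivalence from the splitting isomorphism of Theorem~\ref{realzn of w-alge thm} together with a Stone--von Neumann type statement for Weyl--Clifford algebras, in exact parallel with Losev's argument in the non-super situation. First I would reinterpret $\mathcal{C}$ in completed terms: a $\mathcal{U}$-module $M$ on which $\mmm'$ acts locally nilpotently carries a canonical action of $\mathcal{U}^{\wedge}_{\mmm'}=\varprojlim \mathcal{U}/\mathcal{U}\mmm'^{k}$, since every vector lies in some submodule annihilated by $\mathcal{U}\mmm'^{k}$, so the $\mathcal{U}$-action extends continuously; conversely every \emph{smooth} $\mathcal{U}^{\wedge}_{\mmm'}$-module (a union of such torsion submodules) restricts to an object of $\mathcal{C}$, and a $\mathcal{U}$-linear map between objects of $\mathcal{C}$ is automatically continuous, so $\mathcal{C}$ is literally the category of smooth $\mathcal{U}^{\wedge}_{\mmm'}$-modules, the needed compatibility of topologies being Lemma~\ref{compatible lemma}. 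The analogous statement holds on the other side: $\mathbf{A}(V)\otimes\mathscr{W}$-modules on which $\bar\mmm$ acts locally nilpotently are exactly the smooth modules over the $\bar\mmm$-adic completion $\mathbf{A}(V)_{\bar\mmm}^{\wedge}\otimes\mathscr{W}$.

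Next I would transport this picture through the isomorphism $\Phi:\mathbf{A}(V)_{\bar\mmm}^{\wedge}\otimes\mathscr{W}\cong\mathcal{U}^{\wedge}_{\mmm'}$ of Theorem~\ref{realzn of w-alge thm}. By the construction of $\Phi$ (via Lemmas~\ref{equ of ideals} and~\ref{decom of qun unive le}, as already exploited in the proof of Theorem~\ref{realzation of W}), $\Phi$ matches the two-sided ideal topologically generated by $\bar\mmm$ with the one generated by $\mmm'$, hence matches the two adic topologies and carries the invariants functor $M\mapsto M^{\mmm'}$ to $P\mapsto P^{\bar\mmm}$ (because $\Phi^{-1}$ sends $\mmm'$ into $\bar\mmm$ plus corrections of strictly higher Kazhdan order, so the two invariant subspaces coincide). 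Thus $\mathcal{C}$ is identified, functorially in a way compatible with taking invariants, with the category of $\mathbf{A}(V)\otimes\mathscr{W}$-modules on which $\bar\mmm$ acts locally nilpotently, and the functor in question becomes $P\mapsto P^{\bar\mmm}$.

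The heart of the matter is then the super Stone--von Neumann theorem for $\mathbf{A}(V)=\mathbf{A}(V_{\bz})\otimes\mathrm{Cl}(V_{\bo})$ with its Lagrangian $\bar\mmm=\bar\mmm_{\bz}\oplus\bar\mmm_{\bo}$: the functors $N_{0}\mapsto (\mathbf{A}(V)/\mathbf{A}(V)\bar\mmm)\otimes N_{0}$ and $P\mapsto P^{\bar\mmm}$ are mutually quasi-inverse equivalences between $\bbc$-vector spaces and $\mathbf{A}(V)$-modules with locally nilpotent $\bar\mmm$-action. The even factor is the classical fact that $\mathbf{A}(V_{\bz})/\mathbf{A}(V_{\bz})\bar\mmm_{\bz}$ is the unique simple smooth $\mathbf{A}(V_{\bz})$-module and $P\cong (\mathbf{A}(V_{\bz})/\mathbf{A}(V_{\bz})\bar\mmm_{\bz})\otimes P^{\bar\mmm_{\bz}}$ for every smooth $P$ (an Euler-field argument using the decomposition $\mathbf{A}(V_{\bz})=S[\bar\mmm_{\bz}^{*}]\otimes S[\bar\mmm_{\bz}]$ as a vector space); the odd factor is the statement that $\mathrm{Cl}(V_{\bo})\cong\mathrm{End}(\bigwedge\bar\mmm_{\bo})$ is a matrix algebra whose unique simple module is $\bigwedge\bar\mmm_{\bo}=\mathrm{Cl}(V_{\bo})/\mathrm{Cl}(V_{\bo})\bar\mmm_{\bo}$ (on which $\bar\mmm_{\bo}$, being odd and finite-dimensional, automatically acts nilpotently; note $V_{\bo}$ is even-dimensional, the possibly odd-dimensional line $\bbc\Theta$ having been absorbed into $\mathscr{W}$ in Theorem~\ref{realzn of w-alge thm}). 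Since $\mathscr{W}$ centralizes $\mathbf{A}(V)$ inside the tensor product, $P\mapsto P^{\bar\mmm}$ is exact, faithful and $\mathscr{W}$-linear on smooth $\mathbf{A}(V)\otimes\mathscr{W}$-modules, and combined with the above it yields an equivalence between smooth $\mathbf{A}(V)\otimes\mathscr{W}$-modules and $\mathscr{W}\text{-mod}$, with inverse $N\mapsto (\mathbf{A}(V)/\mathbf{A}(V)\bar\mmm)\otimes N$.

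Finally I would unwind the composite equivalence. Running $M$ through it produces $M^{\mmm'}$ as a $\mathscr{W}$-module; chasing $Q_{\chi}=\mathcal{U}/\mathcal{U}\mmm'$ through $\Phi$ (note $Q_{\chi}=\mathcal{U}^{\wedge}_{\mmm'}/\overline{\mathcal{U}\mmm'}$, no further completion needed) identifies $Q_{\chi}$ with $(\mathbf{A}(V)/\mathbf{A}(V)\bar\mmm)\otimes\mathscr{W}$ as a $(\mathcal{U},\mathscr{W})$-bimodule, so the inverse functor is $N\mapsto Q_{\chi}\otimes_{\mathscr{W}}N$; replacing $\mathscr{W}$ by $\mathcal{U}(\ggg,e)$ via Theorem~\ref{realzation of W} gives exactly the two functors in the statement, and the unit and counit isomorphisms of the transported equivalence are precisely the natural transformations $M^{\mmm'}\mapsto Q_{\chi}\otimes_{\mathscr{W}}M^{\mmm'}\to M$ and $N\to (Q_{\chi}\otimes_{\mathscr{W}}N)^{\mmm'}$ that one must show are isomorphisms. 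The main obstacle is the first two steps: one must be genuinely careful that $\Phi$, an isomorphism only of \emph{completed} algebras, nonetheless induces an equivalence of the un-completed module categories $\mathcal{C}$ and $\mathscr{W}\text{-mod}$ and matches the two invariant-subspace functors on the nose rather than merely up to isomorphism --- this is exactly where Lemmas~\ref{compatible lemma} and~\ref{decom of qun unive le} and the explicit form of $\Phi$ from Theorem~\ref{realzn of w-alge thm} are used. Everything else is either classical (the Stone--von Neumann input) or routine bookkeeping.
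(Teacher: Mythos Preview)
Your proposal is correct and follows essentially the same route as the paper: pass from $\mathcal{C}$ to smooth $\mathcal{U}^{\wedge}_{\mmm'}$-modules, transport through $\Phi$ to the $\mathbf{A}(V)\otimes\mathscr{W}$ side matching $M^{\mmm'}$ with $M^{\bar\mmm}$, invoke a Stone--von Neumann type decomposition $M\cong S(\bar\mmm^*)\otimes M^{\bar\mmm}$, identify $Q_{\chi}\cong S(\bar\mmm^*)\otimes\mathscr{W}$, and read off the quasi-inverse. The paper's proof is terser and cites Lemma~\ref{non-super decom thm} for the key module decomposition without separating the even and odd parts; your explicit splitting into the Weyl and Clifford factors, and your remark that the possible odd line $\bbc\Theta$ has already been absorbed into $\mathscr{W}$ so that $V_{\bar 1}$ is even-dimensional, make the super bookkeeping cleaner than the paper's own argument.
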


This theorem was first stated in \cite{ZS1}. Following \cite{Lo1} we give an alternative proof to it by using the Poisson realization given above.

\begin{proof}

Let $M$  be a  Whittaker $\ggg$ module. Since $\mmm'$ acts locally nilpotent on $M$, we can view  $M$ as  a continuous $\mathcal{U}^{\wedge}_{\mmm'}$ module. Thus we can view $M$ as an $\mathbf{A}_V(\mathscr{W})$-module via the isomorphism $\Phi$. Now we have

\begin{equation}\label{eq 4.2}
M^{\mmm'}=M^{\mathfrak{I}}=M^{\Phi^{-1}(\mathfrak{I})}=M^{\mathfrak{I}_1}=M^{\bar{\mmm}}.
\end{equation}
Here $\mathfrak{I}$ (\textit{resp}. $\mathfrak{I}_{1}$) is the closure of $\mathcal{I}$ (\textit{resp}.  $\mathcal{I}_1$) in $\mathcal{U}^{\wedge}_{\mmm}$
(\textit{resp}. $\mathbf{A}_V(\mathscr{W})$).

For such an  $\mathbf{A}_V(\mathscr{W})$-module $M$, by Lemma \ref{non-super decom thm}  and its proof we have the following  $\mathbf{A}_V(\mathscr{W})$-module isomorphism
\begin{align} \label{eq 4.3}
 S(\bar{\mmm}^*) \otimes M^{\bar{\mmm}} &\longrightarrow  M \\
                   x \otimes m &\mapsto x\cdot m   \quad \text{for $x \in S(\bar{\mmm}^*)$ and $m \in M$.} \nonumber
\end{align}
It is clear that $Q_{\chi}=\mathcal{U}/\mathcal{U}\mmm'=\mathcal{U}^{\wedge}/\mathcal{U}\mathfrak{I}_1$. Hence by \eqref{eq 4.2} we have $\mathscr{W}=Q_{\chi}^{\bar{\mmm}}$. Thus Isomorphism \eqref{eq 4.3} implies  $Q_{\chi}=S(\bar{\mmm}^*)\otimes Q_{\chi}^{\bar{\mmm}}=S(\bar{\mmm}^*)\otimes \mathscr{W}$.

Finally we have
\begin{align*}
&\mathcal{S}(M^{\mmm'})=Q_{\chi}\otimes_{\mathscr{W}} M^{\bar{\mmm}}= S(\bar{\mmm}^*)\otimes \mathscr{W} \otimes_{\mathscr{W}}M^{\bar{\mmm}}=M\\
&(Q_{\chi}\otimes N)^{\mmm'}= (S(\bar{\mmm}^*)\otimes\mathscr{W}\otimes_{\mathscr{W}}\otimes N)^{\bar{\mmm}}=N.
\end{align*}
\end{proof}

For an associative algebra $\mathcal{A}$ and an $\mathcal{A}$-module $M$, denote by  $\mathrm{GK}_{\mathcal{A}}(M)$ the Gelfand-Kirillov dimension of
$M$.  The following theorem  gives the behavior of  $\mathrm{GK}$ dimension under the Skryabin's equivelance.

\begin{lemma}\label{LEMGK}
For any $\mathscr{W}$-module $N$, we have
$$\mathrm{GK}_{\mathcal{U}(\ggg_{\bar{0}})}(\mathcal{S}(N))=\mathrm{GK}_{\mathcal{U}}(\mathcal{S}(N))=\mathrm{GK}_{\mathscr{W}}(N)+\dim(\mmm'_{\bar{0}}).$$
\end{lemma}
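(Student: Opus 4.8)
The plan is to prove the two equalities in turn, both through the Poisson--quantum realization.

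First, $\mathrm{GK}_{\mathcal{U}(\gbz)}(\mathcal{S}(N))=\mathrm{GK}_{\mathcal{U}}(\mathcal{S}(N))$. Here I would invoke the standard fact that Gelfand--Kirillov dimension is unchanged under restriction to a subalgebra over which the ambient algebra is module-finite: by the PBW theorem $\mathcal{U}$ is a free $\mathcal{U}(\gbz)$-module of rank $2^{\dim\gbo}$, and since $Q_{\chi}=\mathcal{U}/\mathcal{U}\mmm'$ is cyclic, $\mathcal{S}(N)=Q_{\chi}\otimes_{\mathscr{W}}N$ is finitely generated over $\mathcal{U}$ whenever $N$ is finitely generated over $\mathscr{W}$. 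Fixing $d$ with $\mathcal{U}=\mathcal{U}(\gbz)\cdot\mathcal{U}_{\le d}$, the inclusions $\mathcal{U}(\gbz)_{\le n}\subseteq\mathcal{U}_{\le n}\subseteq\mathcal{U}(\gbz)_{\le n}\cdot\mathcal{U}_{\le d}$ sandwich the growth functions of $\mathcal{S}(N)$ for the two actions and force them to have equal degree; for an arbitrary $N$ one passes to the supremum over finitely generated $\mathscr{W}$-submodules, using that $\mathcal{S}$ commutes with direct limits. From here on I work only with $\mathrm{GK}_{\mathcal{U}}$ and assume $N$ finitely generated.

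For the remaining equality I would use the identities extracted in the proof of Theorem~\ref{Skryabin's equivalence}: $\mathcal{S}(N)=Q_{\chi}\otimes_{\mathscr{W}}N$ and $Q_{\chi}\cong S(\bar{\mmm}^{*})\otimes\mathscr{W}$, the latter being the restriction to $\mathbb{C}^{\times}$-locally finite parts of the $\mathbb{C}^{\times}$-equivariant $\Phi$ of Theorem~\ref{realzn of w-alge thm}, hence compatible with the Kazhdan filtrations. Two features of this isomorphism are what I would use: it exhibits $Q_{\chi}$ as a \emph{free} right $\mathscr{W}$-module, and, with the standard Kazhdan-homogeneous choice of $\bar{\mmm}$ (the one absorbing the non-positive Kazhdan degrees of $[\ggg,f]$, so that $\bar{\mmm}^{*}$ is spanned by Kazhdan-homogeneous elements of strictly positive degree), each Kazhdan-filtered piece of $Q_{\chi}$ is a sum of only finitely many degree-shifted copies of the corresponding piece of $\mathscr{W}$. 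Consequently $\gr Q_{\chi}\cong S(\bar{\mmm}^{*})\otimes\gr\mathscr{W}$ is a free $\gr\mathscr{W}$-module (the classical shadow of Theorem~\ref{realzn of w-alge thm}; compare the associated-graded computations in the proof of Theorem~\ref{realzation of W}), so, equipping $N$ with a good filtration and $\mathcal{S}(N)$ with the induced one, $\gr$ commutes with the base change $Q_{\chi}\otimes_{\mathscr{W}}(-)$ and
$$\gr\mathcal{S}(N)\;\cong\;\gr Q_{\chi}\otimes_{\gr\mathscr{W}}\gr N\;\cong\;S(\bar{\mmm}^{*})\otimes\gr N$$
as modules over $\gr\mathcal{U}=S(\ggg)$, with $\gr\mathscr{W}$ acting on the second tensor factor.

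It remains to count dimensions. Each of $S(\ggg)$, $S(\bar{\mmm}^{*})$ and $\gr\mathscr{W}\cong S(\ggg_{e})$ (or $S(\ggg_{e})\otimes\bigwedge(\Theta)$ when $\dim\ggg(-1)_{\bar 1}$ is odd) factors as a polynomial ring on its even part tensored with a finite-dimensional exterior algebra, and exterior factors are irrelevant to Gelfand--Kirillov dimension; using also that the support of a base change over a polynomial ring is a product, one gets
$$\mathrm{GK}_{\mathcal{U}}(\mathcal{S}(N))=\dim(\bar{\mmm}^{*}_{\bar 0})+\mathrm{GK}_{\mathscr{W}}(N).$$
Finally $\dim(\bar{\mmm}^{*}_{\bar 0})=\dim(\bar{\mmm}_{\bar 0})=\dim(\mmm_{\bar 0})=\dim(\mmm'_{\bar 0})$: the first two equalities hold because $V=\bar{\mmm}\oplus\bar{\mmm}^{*}$ and $\dim[\ggg,f]_{\bar 0}=\dim[\gbz,f]=2\dim(\mmm_{\bar 0})$ by the standard numerology of a good $\mathbb{Z}$-grading, while the last holds because $x\mapsto x-\chi(x)$ is a parity-preserving linear isomorphism $\mmm\xrightarrow{\ \sim\ }\mmm'$, $\chi$ vanishing on $\gbo$. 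This yields the stated identity. The technical heart — and the step I expect to cost the most work — is the identification $\gr\mathcal{S}(N)\cong\gr Q_{\chi}\otimes_{\gr\mathscr{W}}\gr N$, which requires that the freeness of $Q_{\chi}$ over $\mathscr{W}$ be genuinely compatible with the Kazhdan filtrations (so that $\gr$ commutes with the base change) and that $\gr Q_{\chi}$ be the expected free $\gr\mathscr{W}$-module; the positivity of the Kazhdan grading on $\bar{\mmm}^{*}$ is exactly what makes the relevant filtered bases locally finite and keeps the bookkeeping under control, after which the dimension count is routine.
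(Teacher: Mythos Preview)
Your approach is correct and, in substance, the same as the paper's. The paper's own proof is extremely terse: for the first equality it records exactly your sandwich argument as the ``simple fact'' that the growth functions over $\mathcal{A}$ and over its even subalgebra $\mathcal{A}_{\hat 0}$ agree, and for the second it simply invokes the argument of \cite[Proposition~3.3.5]{Lo1}. What you have written is essentially an unpacking of that cited argument: Losev's proof also rests on the filtered tensor decomposition coming from the splitting $\Phi^{\heartsuit}$ (equivalently, from $Q_\chi\cong S(\bar{\mmm}^*)\otimes\mathscr{W}$) and then reads off the growth rate as a sum. Your passage to associated graded modules and the freeness/flatness bookkeeping is the natural way to make that step rigorous, and the positivity of the Kazhdan weights on $\bar{\mmm}^*$ is precisely the finiteness input you need; the numerology $\dim(\bar{\mmm}^*_{\bar 0})=\dim(\mmm'_{\bar 0})$ is the standard one. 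So there is no genuinely different idea here---you have supplied the details that the paper outsources to \cite{Lo1}.
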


\begin{proof}
Let $\mathcal{A},\mathcal{A}_{1}$  have the same meaning as in Subsection \S \ref{subsec con of w}.
Let $\mathcal{A}_{\hat{0}}$ be the subalgebra of $\mathcal{A}$  generated by $\ggg_{\bar{0}}$. Choose a finite-dimensional subspace $M_0$ generating $\mathcal{A}_{\hat{0}}^{\heartsuit}$ module $\mathcal{S}(M)$.
Then the lemma follows from the simple fact
$$\lim_{n\rightarrow \infty}  \frac{\ln \dim((F_n\mathcal{A})M_0)}{\ln n}=\lim_{n\rightarrow \infty}  \frac{\ln \dim(F_n(\mathcal{A}_{\hat{0}})M_0)}{\ln n},$$
along with the argument in the proof of \cite[Proposition 3.3.5]{Lo1}.
\end{proof}

\subsection{Maps $\bullet_{\dag}$ and $\bullet^{\dag}$}
For an associative algebra $\mathcal{A}$, we denote by $\mathfrak{id}(\mathcal{A})$ the set of its two-sided ideals. An ideal $\mathcal{I} \subset \mathcal{A}$  is said to be  primitive if it is the annihilator of an irreducible $\mathcal{A}$ module. The set of primitive ideals of $\mathcal{A}$ is denoted by $\Prim(\mathcal{A})$. Obviously $\Prim(\mathcal{A}) \subset \mathfrak{id}(\mathcal{A})$. Let $\Prim^{\fin}(\mathcal{A})$ be the set of primitive ideals of $\mathcal{A}$ with finite codimension. It is well known that there is a bijection between the set of finite-dimensional  irreducible $\mathcal{A}$ modules and $\Prim^{\fin}(\mathcal{A})$.

From now on, we  assume that $\mathfrak{g}$ is a basic Lie superalgebra of type \uppercase\expandafter{\romannumeral1}.
In this section we establish correspondences $\bullet^{\dag}$ and $\bullet_{\dag}$ between $\mathfrak{id}(\mathscr{W})$ and $\mathfrak{id}(\mathcal{U})$ following Losev in the non-super case. Then we study the finite-dimensional irreducible representations of $\mathscr{W}$ by them.

We simplify the notation $\mathbf{A}(V)_{\bar{\mmm}}^{\wedge} \otimes \mathscr{W}$ by $\mathbf{A}(\mathscr{W})$.
Following \cite{Lo1}, we construct a map $\mathfrak{id}(\mathscr{W}) \rightarrow  \mathfrak{id}(\mathcal{U}); \mathcal{I} \mapsto \mathcal{I}^{\dagger}$ as follows.
For $\mathcal{I} \in \mathfrak{id}(\mathscr{W})$ define an ideal of $\mathbf{A}(\mathscr{W})$ by
$$\mathbf{A}(\mathcal{I})^{\wedge}=\lim_{\leftarrow}(\mathbf{A}(\mathscr{W})(\mathcal{I})+\mathbf{A}(\mathscr{W})\bar{\mmm}^{k})/ \mathbf{A}(\mathscr{W})\bar{\mmm}^{k}.$$

Set $\mathcal{I}^{\dag}=\cu \cap \Phi(\mathbf{A}(\mathcal{I})^{\wedge})$.
Now we have the following  properties of  the map  $\bullet^{\dag}$.

\begin{theorem}\label{thm5.1}
\begin{itemize}
\item[(\rmnum{1})] $ (\mathcal{I}_1 \cap \mathcal{I}_2)^{\dag} = \mathcal{I}_1^{\dag} \cap \mathcal{I}_2^{\dag}$, for all $\mathcal{I}_1, \mathcal{I}_2 \in \mathfrak{id}(\mathscr{W})$.
\item[(\rmnum{2})]  For any $\mathscr{W}$-module $N$, $\Ann(N)^{\dag}=\Ann (\mathcal{S}(N))$.
\item[(\rmnum{3})] $\iota(\mathcal{I}^{\dag} \cap \mathcal{Z} (\ggg))=\mathcal{I} \cap \iota(\mathcal{Z} (\ggg))$. Here $\mathcal{Z} (\ggg)$ is the center of $\mathcal{U}$ and $\iota$ is the natural inclusion $\mathcal{Z} (\ggg) \hookrightarrow \mathscr{W}$.
\item[(\rmnum{4})] $\mathcal{I}^{\dag}$ is prime provided $\mathcal{I}$ is prime. The ideal $\mathcal{I}^{\dag}$ is primitive if $\mathcal{I}$ is prime with \\  $\mathrm{codim}_{\mathcal{Z}(\mathscr{W})}({\mathcal{Z}(\mathscr{W})} \cap \mathcal{I}) = 1$.
 \end{itemize}
\end{theorem}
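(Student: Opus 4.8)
The plan is to transport every statement across the algebra isomorphism $\Phi\colon\mathbf{A}(\mathscr{W})=\mathbf{A}(V)_{\bar{\mmm}}^{\wedge}\otimes\mathscr{W}\xrightarrow{\ \sim\ }\mathcal{U}^{\wedge}_{\mmm'}$ of Theorem~\ref{realzn of w-alge thm}, under which, by definition, $\mathcal{I}^{\dag}=\mathcal{U}\cap\Phi(\mathbf{A}(\mathcal{I})^{\wedge})$ is the contraction of the $\bar{\mmm}$-adically completed extension $\mathbf{A}(\mathcal{I})^{\wedge}$ of $\mathcal{I}\in\mathfrak{id}(\mathscr{W})$. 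Two features of the picture drive the arguments: first, $\mathbf{A}(V)$ is a simple (Weyl--Clifford) algebra, so it is transparent on ideals and centres; second, the chain of operations ``extend to $\mathbf{A}(V)\otimes\mathcal{I}$'', ``complete $\bar{\mmm}$-adically'', ``apply $\Phi$'', ``contract to $\mathcal{U}$'' is exact enough, the control over the completions being supplied by the super analogues of the filtration-compatibility results of \S\ref{sec: 3.2} (Lemmas~\ref{equ of ideals}, \ref{compatible lemma}, \ref{decom of qun unive le}) and, where needed, by recasting a claim as a statement about $\hbar$-saturated, $\mathbb{C}^{\times}$-stable ideals of the Rees algebras via Proposition~\ref{prop3.4}.

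For (i), exactness of $\mathbf{A}(V)\otimes(-)$ gives $\mathbf{A}(V)\otimes(\mathcal{I}_1\cap\mathcal{I}_2)=(\mathbf{A}(V)\otimes\mathcal{I}_1)\cap(\mathbf{A}(V)\otimes\mathcal{I}_2)$, an Artin--Rees argument shows this persists after $\bar{\mmm}$-adic completion, and then $\Phi$ and the intersection with $\mathcal{U}$ preserve finite intersections. For (ii), I would use $Q_\chi=S(\bar{\mmm}^{*})\otimes\mathscr{W}$ from the proof of Theorem~\ref{Skryabin's equivalence} to write $\mathcal{S}(N)=Q_\chi\otimes_{\mathscr{W}}N\cong S(\bar{\mmm}^{*})\otimes N$, with $\mmm'\leftrightarrow\bar{\mmm}$ acting locally nilpotently; thus $\mathcal{S}(N)$ is a continuous $\mathcal{U}^{\wedge}_{\mmm'}$-module, hence via $\Phi$ a module over $\mathbf{A}(\mathscr{W})$ on which $\mathbf{A}(V)_{\bar{\mmm}}^{\wedge}$ acts through its faithful Fock module $S(\bar{\mmm}^{*})$. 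Faithfulness should give $\Ann_{\mathbf{A}(\mathscr{W})}(\mathcal{S}(N))=\mathbf{A}(\Ann_{\mathscr{W}}(N))^{\wedge}$, and since the action of $u\in\mathcal{U}$ inside $\mathcal{U}^{\wedge}_{\mmm'}$ is the given one, contracting to $\mathcal{U}$ yields $\Ann_{\mathcal{U}}(\mathcal{S}(N))=\mathcal{U}\cap\Phi(\mathbf{A}(\Ann_{\mathscr{W}}(N))^{\wedge})=\Ann(N)^{\dag}$.

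For (iii), any $z\in\mathcal{Z}(\ggg)$ is central in $\mathcal{U}$, hence in $\mathcal{U}^{\wedge}_{\mmm'}$, so $\Phi^{-1}(z)$ is central in $\mathbf{A}(\mathscr{W})$; as $\mathbf{A}(V)$ is simple the centre of $\mathbf{A}(\mathscr{W})$ is $1\otimes\mathcal{Z}(\mathscr{W})$, and $\Phi^{-1}(z)=1\otimes\iota(z)$, which is how the inclusion $\iota$ arises. Then $z\in\mathcal{I}^{\dag}$ iff $1\otimes\iota(z)\in\mathbf{A}(\mathcal{I})^{\wedge}$; since $\mathbf{A}(\mathscr{W})/\mathbf{A}(\mathcal{I})^{\wedge}$ is the $\bar{\mmm}$-adic completion of $\mathbf{A}(V)\otimes(\mathscr{W}/\mathcal{I})$, into which $\mathscr{W}/\mathcal{I}$ embeds via $1\otimes(-)$, this holds iff $\iota(z)\in\mathcal{I}$, and applying $\iota$ to the set of such $z$ gives $\iota(\mathcal{Z}(\ggg))\cap\mathcal{I}$. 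For (iv), $\mathbf{A}(V)$ simple forces $\mathbf{A}(V)\otimes(\mathscr{W}/\mathcal{I})$ to be prime whenever $\mathscr{W}/\mathcal{I}$ is, and I would check that primeness survives the $\bar{\mmm}$-adic completion by passing to the $\hbar$-adic Rees picture of \S\ref{sec: 3.2}, where Proposition~\ref{prop3.4} lets one test it on the associated graded; then $\mathcal{I}^{\dag}$ is prime as a contraction of a prime ideal. For the primitivity clause, the hypothesis $\mathrm{codim}_{\mathcal{Z}(\mathscr{W})}(\mathcal{Z}(\mathscr{W})\cap\mathcal{I})=1$ forces $\mathcal{I}$ itself to be primitive --- this is where one invokes a Dixmier--Moeglin-type statement for $\mathscr{W}$, available because $\gr\mathscr{W}$ is commutative Noetherian affine and $\mathcal{Z}(\mathscr{W})$ is correspondingly large (using also that $\ggg$ is of type~I) --- so $\mathcal{I}=\Ann_{\mathscr{W}}(N)$ for an irreducible $N$; by Theorem~\ref{Skryabin's equivalence} the module $\mathcal{S}(N)$ is irreducible over $\mathcal{U}$, whence by (ii) $\mathcal{I}^{\dag}=\Ann_{\mathcal{U}}(\mathcal{S}(N))$ is primitive.

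The crux is the handling of the $\mmm'$-adic (equivalently $\bar{\mmm}$-adic) completions: proving that extension--completion--contraction commutes with finite intersections (needed for (i)), computes annihilators correctly (needed for (ii) and the ``only if'' of (iii)), and preserves primeness (needed for (iv)). These are the super counterparts of Losev's filtration lemmas, and I expect the efficient route to be to phrase each such claim at the level of Rees algebras and $\hbar$-saturated ideals as in \S\ref{sec: 3.2}. The Dixmier--Moeglin input required for the final assertion of (iv) is the other point that genuinely needs to be established in the super setting.
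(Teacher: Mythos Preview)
Your overall plan for (i)--(iii) and for the primeness half of (iv) matches the paper's, which simply defers these parts to Losev's non-super arguments (\cite[Theorem~1.2.2]{Lo1}), carried out through exactly the completion/Rees-algebra mechanism you describe.

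For the primitivity clause in (iv), however, the paper takes a different and more economical route than yours. You propose to establish a Dixmier--Moeglin-type statement for $\mathscr{W}$, conclude that $\mathcal{I}$ itself is primitive, and then push this forward via Skryabin's equivalence and (ii). The paper instead stays on the $\mathcal{U}$ side: from (iii) together with the inclusion $\iota(\mathcal{Z}(\ggg))\subset\mathcal{Z}(\mathscr{W})$, the hypothesis $\mathrm{codim}_{\mathcal{Z}(\mathscr{W})}(\mathcal{Z}(\mathscr{W})\cap\mathcal{I})=1$ yields $\mathrm{codim}_{\mathcal{Z}(\ggg)}(\mathcal{I}^{\dag}\cap\mathcal{Z}(\ggg))=1$; combined with the first half of (iv) one now has a \emph{prime} ideal of $\mathcal{U}$ with central character, and the paper invokes the super analogue of \cite[Proposition~7.3]{Jan}, valid precisely because $\ggg$ is of type~I, to conclude that $\mathcal{I}^{\dag}$ is primitive. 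The advantage is that the Dixmier--Moeglin input is taken off the shelf for $\mathcal{U}$ rather than having to be proved for $\mathscr{W}$; in particular, the type~I hypothesis enters on the $\mathcal{U}$ side, not on the $\mathscr{W}$ side as you suggest. Your route is plausible, but the implication ``prime with $\mathrm{codim}_{\mathcal{Z}(\mathscr{W})}(\mathcal{Z}(\mathscr{W})\cap\mathcal{I})=1$ $\Rightarrow$ primitive'' for $\mathscr{W}$ is not established in the paper and would require an independent argument.
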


\begin{proof}
We only prove the second statement of (\rmnum{4}). The proofs of the remaining statements are exactly the same as those of \cite[Theorem 1.2.2]{Lo1}.
For a prime ideal $\mathcal{I}\subset \mathscr{W}$ with $\codim_{\mathcal{Z}(\mathscr{W})}({\mathcal{Z}(\mathscr{W})} \cap \mathcal{I}) = 1$,
 statement (\rmnum{3}) implies $\codim_{\mathcal{Z}(\ggg)}(\mathcal{I}^{\dag} \cap \mathcal{Z} (\ggg))=1$. Since we are assuming $\ggg$ is of type \uppercase\expandafter{\romannumeral1} , we can prove by the same argument as in [\cite{Jan},Proposition 7.3] that any  prime ideal $\mathcal{J}\subset \mathcal{U}$ with $\codim_{\mathcal{Z}(\mathcal{\ggg})}(\mathcal{Z}(\mathcal{U})\cap \mathcal{J})=1$ is primitive.
\end{proof}

The following is another description of the map $\bullet^\dag$.
\begin{prop}\label{prop5.2}
We have $\mathbf{R}_{\hbar}(\mathcal{I}^{\dag})=\mathbf{R}_\hbar(\mathcal{U})\cap \Phi^{-1}_{\hbar}(S[[V,\hbar]]\otimes\bar{\mathcal{I}}_\hbar)$.
For a given $\mathcal{I} \in \mathfrak{id}(\mathscr{W})$, $\mathcal{I}^{\dag}$ is uniquely determined by the above property.
\end{prop}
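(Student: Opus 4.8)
The plan is to lift the identity to the level of Rees algebras and $\hbar$-adic completions, where it reduces to a statement about $\hbar$-saturated ideals, and then to appeal to the bijection of \cite[Proposition~3.2.2]{Lo1} together with the compatibility lemmas already at our disposal.

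First I would record the dictionary coming from the proof of Theorem~\ref{realzation of W}: the isomorphism $\Phi$ of Theorem~\ref{realzn of w-alge thm} is obtained from $\Phi_\hbar$ by restricting to $\mathbb{C}^{\times}$-local finite parts --- which, by Lemma~\ref{lemma3.4}, identifies $(\hat{A}_\hbar)_{\mathbb{C}^{\times}\text{-fin}}$ with $\mathbf{R}_{\hbar}(\mathcal{A}^{\heartsuit})$ and $(\mathbf{A}_\hbar^{\wedge_{0}}(V)\hat{\otimes}\hat{B}_\hbar)_{\mathbb{C}^{\times}\text{-fin}}$ with $\mathbf{R}_{\hbar}(\mathcal{A}_1^{\heartsuit})$ --- then specializing $\hbar\mapsto1$ and completing with respect to the $\mmm'$-adic, resp. $\bar{\mmm}$-adic, topology. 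Dually, $\bar{\mathcal{I}}_\hbar$ is the $M_{0,\hbar}$-adic closure of $\mathbf{R}_{\hbar}(\mathcal{I})$ in $\hat{B}_\hbar$, so $S[[V,\hbar]]\hat{\otimes}\bar{\mathcal{I}}_\hbar$ is the closure of the Rees algebra of the ideal of $\mathbf{A}(V)\otimes\mathscr{W}$ generated by $\mathcal{I}$; passing to $\mathbb{C}^{\times}$-fin parts, specializing $\hbar\mapsto1$ and completing recovers $\mathbf{A}(\mathcal{I})^{\wedge}$, by the same argument used to prove Theorem~\ref{realzn of w-alge thm} (where Lemmas~\ref{equ of ideals}, \ref{compatible lemma} and \ref{decom of qun unive le} ensure that the $\bar{\mmm}$-adic, $\mmm'$-adic and $M_{0,\hbar}$-adic filtrations are mutually compatible and that $\Phi_\hbar$ is continuous for them). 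Under these identifications the assertion is equivalent to the claim that
$$J_\hbar:=\mathbf{R}_{\hbar}(\mathcal{U})\cap\Phi_\hbar^{-1}\bigl(S[[V,\hbar]]\hat{\otimes}\bar{\mathcal{I}}_\hbar\bigr)$$
is an $\hbar$-saturated two-sided ideal of $\mathbf{R}_{\hbar}(\mathcal{U})$ whose specialization at $\hbar=1$ is $\mathcal{I}^{\dag}$.

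Next I would verify the $\hbar$-saturation. Being the closure of a Rees-algebra ideal, $\bar{\mathcal{I}}_\hbar$ is $\hbar$-saturated in $\hat{B}_\hbar$ by the analogue of Proposition~\ref{prop3.4}(1), i.e. $\hat{B}_\hbar/\bar{\mathcal{I}}_\hbar$ has no $\hbar$-torsion; since $S[[V,\hbar]]$ is flat and $\hbar$-torsion free over $\mathbb{C}[[\hbar]]$, the module $S[[V,\hbar]]\hat{\otimes}(\hat{B}_\hbar/\bar{\mathcal{I}}_\hbar)$ has no $\hbar$-torsion, so $S[[V,\hbar]]\hat{\otimes}\bar{\mathcal{I}}_\hbar$ is $\hbar$-saturated. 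This property is preserved under the algebra isomorphism $\Phi_\hbar$, and intersecting with $\mathbf{R}_{\hbar}(\mathcal{U})$ preserves it as well: if $\hbar y\in J_\hbar$ with $y\in\mathbf{R}_{\hbar}(\mathcal{U})$, then $\hbar y$ lies in the larger $\hbar$-saturated ideal, hence so does $y$, whence $y\in J_\hbar$. By \cite[Proposition~3.2.2]{Lo1} there is therefore a unique $\mathcal{J}\in\mathfrak{id}(\mathcal{U})$ with $\mathbf{R}_{\hbar}(\mathcal{J})=J_\hbar$.

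It remains to check $\mathcal{J}=\mathcal{I}^{\dag}$. Specializing $\hbar\mapsto1$ and using $\mathbf{R}_{\hbar}(\mathcal{U})/(\hbar-1)=\mathcal{U}$, $\mathbf{R}_{\hbar}(\mathcal{J})/(\hbar-1)=\mathcal{J}$, together with the fact that all the ideals in play are $\hbar$-saturated and $\mathbb{C}^{\times}$-stable so that specialization commutes with the intersection (exactly as in the proof of Theorem~\ref{realzation of W}), the right-hand side of the displayed identity specializes to $\mathcal{U}\cap\Phi(\mathbf{A}(\mathcal{I})^{\wedge})=\mathcal{I}^{\dag}$, by the dictionary of the second paragraph. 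Hence $\mathcal{J}=\mathcal{I}^{\dag}$, and the uniqueness clause is immediate from the injectivity of $\mathcal{X}\mapsto\mathbf{R}_{\hbar}(\mathcal{X})$. The main obstacle is not the $\hbar$-saturation, which is short, but the bookkeeping behind the second paragraph: verifying that taking $\mathbb{C}^{\times}$-local finite parts, forming the various adic closures and completions, transporting along $\Phi_\hbar$, intersecting ideals, and specializing $\hbar\mapsto1$ all commute on the ideals concerned, and in particular that the $\hbar=1$ specialization of $\Phi_\hbar^{-1}(S[[V,\hbar]]\hat{\otimes}\bar{\mathcal{I}}_\hbar)$, suitably completed, is genuinely $\Phi(\mathbf{A}(\mathcal{I})^{\wedge})$; this is where Lemmas~\ref{compatible lemma} and \ref{decom of qun unive le} enter, just as in the proof of Theorem~\ref{realzn of w-alge thm}.
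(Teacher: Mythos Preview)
Your outline is correct and follows the same route that the paper invokes: the paper omits the proof entirely and simply refers to \cite[Proposition~3.4.1]{Lo1}, and what you have written is precisely that argument transported to the present setting --- show the right-hand side is an $\hbar$-saturated ideal of $\mathbf{R}_\hbar(\mathcal{U})$, invoke the bijection of \cite[Proposition~3.2.2]{Lo1} to produce a unique $\mathcal{J}\in\mathfrak{id}(\mathcal{U})$, and identify $\mathcal{J}$ with $\mathcal{I}^{\dag}$ by specializing $\hbar\mapsto 1$ and tracking through the dictionary between $\Phi_\hbar$ and $\Phi$ established in Theorems~\ref{realzation of W} and \ref{realzn of w-alge thm}. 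Your candid remark that the real work lies in the bookkeeping (compatibility of the various adic topologies under $\Phi_\hbar$, and commutation of intersection with specialization on $\hbar$-saturated $\mathbb{C}^{\times}$-stable ideals) is exactly right, and Lemmas~\ref{equ of ideals}, \ref{compatible lemma}, \ref{decom of qun unive le} are indeed what make it go through.
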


We omit the proof  of the proposition, which may be given by the same argument as in the proof of \cite[Proposition 3.4.1]{Lo1}.

\subsection{Finite-dimensional representations of $\mathcal{U}(\ggg,e)$}\label{sec fd rep}

In this subsection we construct a series of  finite-dimensional $\mathcal{U}(\ggg,e)$-modules. Moreover, we prove that by such a  construction, all finite-dimensional irreducible representations of $\mathscr{W}$ can be exhausted (see Theorem \ref{main thm on rep}). Our another main tool, a map $\bullet_{\dag}:\mathfrak{id}(\mathcal{U}) \mapsto \mathfrak{id}(\mathscr{W})$ is defined as follows.
For an ideal $\mathcal{I} \in \mathfrak{id}$, denote by  $\bar{\mathcal{I}}_{\hbar}$  the closure of $\mathbf{R}_{\hbar}(\mathcal{I})$ in $S[\ggg]^{\wedge_{\chi}}_{\hbar}$. Define $\mathcal{I}_{\dag}$ to be the unique (by Proposition \ref{prop3.4}(3)) ideal in $\mathscr{W}$  such that

$$\mathbf{R}_{\hbar}(\mathcal{I}_{\dag})=\Phi_{\hbar}^{-1}(\bar{\mathcal{I}}_{\hbar}) \cap \mathbf{R}_{\hbar}(\mathscr{W})$$

\begin{prop}\label{propinclu}
For any $\mathcal{I} \in \mathfrak{id}(\mathscr{W})$  and $\mathcal {J} \in \mathfrak{id}(\mathcal{U})$ we have $\mathcal{I} \supset (\mathcal{I}^{\dag})_{\dag}$ and $\mathcal{J} \subset (\mathcal{J}_{\dag})^{\dag}$.
\end{prop}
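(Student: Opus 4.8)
The plan is to argue entirely at the level of Rees algebras, exploiting the isomorphism $\Phi_\hbar$ of \eqref{eqdecom}. Since $\mathcal{I}\mapsto\mathbf{R}_\hbar(\mathcal{I})$ is an inclusion-preserving injection on two-sided ideals, it is enough to establish the two containments after applying $\mathbf{R}_\hbar(-)$ and identifying $\hat{A}_\hbar$ with $\mathbf{A}_\hbar^{\wedge_0}(V)\hat{\otimes}\hat{B}_\hbar$ via $\Phi_\hbar$. Under this identification Proposition \ref{prop5.2} reads
$$\mathbf{R}_\hbar(\mathcal{I}^\dag)=\mathbf{R}_\hbar(\mathcal{U})\cap\bigl(\mathbf{A}_\hbar^{\wedge_0}(V)\hat{\otimes}\,\bar{\mathcal{I}}_\hbar\bigr)\qquad(\mathcal{I}\in\mathfrak{id}(\mathscr{W})),$$
and the definition of $\bullet_\dag$ reads
$$\mathbf{R}_\hbar(\mathcal{J}_\dag)=\bar{\mathcal{J}}_\hbar\cap\mathbf{R}_\hbar(\mathscr{W})\qquad(\mathcal{J}\in\mathfrak{id}(\mathcal{U})),$$
where $\bar{\mathcal{I}}_\hbar$ (resp.\ $\bar{\mathcal{J}}_\hbar$) is the closure of $\mathbf{R}_\hbar(\mathcal{I})$ in $\hat{B}_\hbar$ (resp.\ of $\mathbf{R}_\hbar(\mathcal{J})$ in $\hat{A}_\hbar$) and $\mathbf{R}_\hbar(\mathscr{W})$ is identified with $1\hat{\otimes}(\hat{B}_\hbar)_{\mathbb{C}^{\times}\text{-fin}}$ (the super analogue of Lemma \ref{lemma3.4}). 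I will use throughout that $\Phi_\hbar$ is an isomorphism of topological algebras, and that for a closed two-sided ideal $\mathfrak{a}\subseteq\hat{B}_\hbar$ the closed ideal $\mathbf{A}_\hbar^{\wedge_0}(V)\hat{\otimes}\mathfrak{a}$ meets $1\hat{\otimes}\hat{B}_\hbar$ exactly in $1\hat{\otimes}\mathfrak{a}$.

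For $(\mathcal{I}^\dag)_\dag\subseteq\mathcal{I}$ I would proceed as follows. The displayed formula for $\mathcal{I}^\dag$ gives $\mathbf{R}_\hbar(\mathcal{I}^\dag)\subseteq\mathbf{A}_\hbar^{\wedge_0}(V)\hat{\otimes}\,\bar{\mathcal{I}}_\hbar$, and passing to closures in $\hat{A}_\hbar$ (the right-hand side being closed) yields $\overline{\mathbf{R}_\hbar(\mathcal{I}^\dag)}\subseteq\mathbf{A}_\hbar^{\wedge_0}(V)\hat{\otimes}\,\bar{\mathcal{I}}_\hbar$. Intersecting with $\mathbf{R}_\hbar(\mathscr{W})=1\hat{\otimes}(\hat{B}_\hbar)_{\mathbb{C}^{\times}\text{-fin}}$ then gives
$$\mathbf{R}_\hbar\bigl((\mathcal{I}^\dag)_\dag\bigr)=\overline{\mathbf{R}_\hbar(\mathcal{I}^\dag)}\cap\mathbf{R}_\hbar(\mathscr{W})\ \subseteq\ 1\hat{\otimes}\bigl(\bar{\mathcal{I}}_\hbar\cap(\hat{B}_\hbar)_{\mathbb{C}^{\times}\text{-fin}}\bigr).$$
Finally $\bar{\mathcal{I}}_\hbar$ is a closed, $\hbar$-saturated, $\mathbb{C}^{\times}$-stable ideal, so by the bijection of Proposition \ref{prop3.4} passing from $\mathbf{R}_\hbar(\mathcal{I})$ to its closure and intersecting back with the Rees algebra recovers $\mathbf{R}_\hbar(\mathcal{I})$; hence $\bar{\mathcal{I}}_\hbar\cap(\hat{B}_\hbar)_{\mathbb{C}^{\times}\text{-fin}}=\mathbf{R}_\hbar(\mathcal{I})$ and $\mathbf{R}_\hbar((\mathcal{I}^\dag)_\dag)\subseteq\mathbf{R}_\hbar(\mathcal{I})$, i.e.\ $(\mathcal{I}^\dag)_\dag\subseteq\mathcal{I}$.

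For $\mathcal{J}\subseteq(\mathcal{J}_\dag)^\dag$ the key ingredient I would need is the structural fact that every closed two-sided ideal of $\mathbf{A}_\hbar^{\wedge_0}(V)\hat{\otimes}\hat{B}_\hbar$ has the form $\mathbf{A}_\hbar^{\wedge_0}(V)\hat{\otimes}\mathfrak{a}$ for a closed two-sided ideal $\mathfrak{a}\subseteq\hat{B}_\hbar$ --- the $\hbar$-adic, completed, $\mathbb{Z}_2$-graded counterpart of the simplicity of the Weyl algebra. Granting it, I would write $\bar{\mathcal{J}}_\hbar=\mathbf{A}_\hbar^{\wedge_0}(V)\hat{\otimes}\mathfrak{a}$ with $\mathfrak{a}$ closed and $\mathbb{C}^{\times}$-stable (the latter since $\mathbf{R}_\hbar(\mathcal{J})$ and $\Phi_\hbar$ are $\mathbb{C}^{\times}$-equivariant), so that
$$\mathbf{R}_\hbar(\mathcal{J}_\dag)=\bigl(\mathbf{A}_\hbar^{\wedge_0}(V)\hat{\otimes}\mathfrak{a}\bigr)\cap\bigl(1\hat{\otimes}(\hat{B}_\hbar)_{\mathbb{C}^{\times}\text{-fin}}\bigr)=1\hat{\otimes}\bigl(\mathfrak{a}\cap(\hat{B}_\hbar)_{\mathbb{C}^{\times}\text{-fin}}\bigr).$$
Since the Kazhdan grading on $\hat{B}_\hbar$ is positive, the $\mathbb{C}^{\times}$-finite part of the closed $\mathbb{C}^{\times}$-stable ideal $\mathfrak{a}$ is dense in $\mathfrak{a}$, so the closure $\overline{\mathbf{R}_\hbar(\mathcal{J}_\dag)}$ in $\hat{B}_\hbar$ equals $\mathfrak{a}$, and hence $\mathbf{A}_\hbar^{\wedge_0}(V)\hat{\otimes}\,\overline{\mathbf{R}_\hbar(\mathcal{J}_\dag)}=\bar{\mathcal{J}}_\hbar$. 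Feeding this into the formula for $(\mathcal{J}_\dag)^\dag$ gives
$$\mathbf{R}_\hbar(\mathcal{J})\ \subseteq\ \mathbf{R}_\hbar(\mathcal{U})\cap\bar{\mathcal{J}}_\hbar\ =\ \mathbf{R}_\hbar(\mathcal{U})\cap\bigl(\mathbf{A}_\hbar^{\wedge_0}(V)\hat{\otimes}\,\overline{\mathbf{R}_\hbar(\mathcal{J}_\dag)}\bigr)\ =\ \mathbf{R}_\hbar\bigl((\mathcal{J}_\dag)^\dag\bigr),$$
whence $\mathcal{J}\subseteq(\mathcal{J}_\dag)^\dag$.

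The hard part will be the structural decomposition of closed ideals of $\mathbf{A}_\hbar^{\wedge_0}(V)\hat{\otimes}\hat{B}_\hbar$; everything else is formal manipulation of closures and intersections together with the already-available Proposition \ref{prop3.4}. To prove the decomposition I would imitate Lemma \ref{non-super decom thm}(ii) and the straightening identity \eqref{eq quan decom}: using the generating pair $f,g$ of each copy of $\mathbf{A}_\hbar^{\wedge_0}(V)$ one rewrites an arbitrary element of a given closed ideal, modulo that ideal, so as to lie in $1\hat{\otimes}\hat{B}_\hbar$, which forces the ideal to be $\mathbf{A}_\hbar^{\wedge_0}(V)\hat{\otimes}\mathfrak{a}$ with $\mathfrak{a}$ its intersection with $1\hat{\otimes}\hat{B}_\hbar$; this is exactly the non-super argument of \cite{Lo1}, the only extra care being with $\hbar$-adic convergence and with the odd generators.
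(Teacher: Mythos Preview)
Your proposal is correct and follows essentially the same route as the paper: both arguments work at the level of Rees algebras, invoke Proposition~\ref{prop5.2} and Proposition~\ref{prop3.4}(3) for the inclusion $(\mathcal{I}^\dag)_\dag\subset\mathcal{I}$, and for $\mathcal{J}\subset(\mathcal{J}_\dag)^\dag$ rely on the structural decomposition of closed two-sided ideals of $\mathbf{A}_\hbar^{\wedge_0}(V)\hat{\otimes}\hat{B}_\hbar$, which is exactly the ``forthcoming fact'' \eqref{eq5.2} that the paper extracts from the proof of Theorem~\ref{SQDWtm}. If anything, you are slightly more explicit than the paper in articulating the density step $\overline{\mathbf{R}_\hbar(\mathcal{J}_\dag)}=\mathfrak{a}$ via positivity of the Kazhdan grading on $\hat{B}_\hbar$.
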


\begin{proof}
We have

$$ \begin{array}{lll}
\mathbf{R}_\hbar((\mathcal{I}^{\dag})_\dag) &=\Phi_{\hbar}^{-1}(\overline{\mathbf{R}_\hbar(\mathcal{I}^{\dag})}) \cap \mathbf{R}_\hbar(\mathscr{W}) \\
                           &= \Phi_{\hbar}^{-1}(\overline{\Phi_{\hbar}(S[[V,\hbar]]\otimes \overline{\mathbf{R}_\hbar(\mathcal{I})}) \cap \mathbf{R}_\hbar(U)}) \cap \mathbf{R}_\hbar(\mathscr{W}) \\
                           & \subset    (S[[V,\hbar]]\otimes \overline{\mathbf{R}_\hbar(\mathcal{I})}) \cap \mathbf{R}_\hbar(\mathscr{W}) \\
                           &=\overline{\mathbf{R}_\hbar(\mathcal{I})} \cap \mathbf{R}_\hbar(\mathscr{W}) \\
                           &= \mathbf{R}_\hbar(\mathcal{I}).
 \end{array}$$

Here the first equation follows from  Proposition \ref{prop5.2} and the last one follows from Proposition \ref{prop3.4}(3). Hence we have $(\mathcal{I}^{\dag})_\dag \subset \mathcal{I}$.
$$ \begin{array}{lll}
\mathbf{R}_\hbar((\mathcal{J}_{\dag})^{\dag})&=    \Phi_\hbar^{-1} ( S[[V,\hbar]] \otimes \overline{\mathbf{R}_\hbar(\mathcal{J}_{\dag})}) \cap \mathbf{R}_\hbar(\mathcal{U}) \\
                                             &=    \Phi_\hbar^{-1} ( S[[V,\hbar]] \otimes (\Phi_\hbar (\overline{\mathbf{R}_\hbar(\mathcal{J}})\cap S[[\mathbf{S}, \hbar]])  \cap \mathbf{R}_\hbar(\mathcal{U}) \\
                                             &= \overline{\mathbf{R}_\hbar(\mathcal{J})} \cap \mathbf{R}_\hbar(\mathcal{U})  \\
                                             & \supset \mathbf{R}_\hbar(\mathcal{J}).
\end{array}$$

Here the third equation follows from the forthcoming fact \eqref{eq5.2}. Thus we have  $\mathcal{J} \subset (\mathcal{J}_{\dag})^{\dag}$.

\end{proof}

\begin{prop}\label{prop5.4}
The isomorphism $\Phi_\hbar$ induces an isomorphism between
$\gr(\mathcal{J}_{\dag})$ and $(\gr(\mathcal{J})+\mathbf{I}(\mathbf{S}))/\mathbf{I}(\mathbf{S})$, where $\mathbf{I}(\mathbf{S})$ is the ideal of $S[\ggg]$ generated by $V$.  Here $``\gr"$ means the associated grading with respect to the Kazadan filtration.
\end{prop}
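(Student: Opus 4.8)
The plan is to pass to the classical limit $\hbar\to 0$ of $\Phi_\hbar$, which by Theorem~\ref{EDWthm} is the $\mathbb{C}^\times$-equivariant Poisson isomorphism $\phi=\Phi_0\colon S[[V]]\,\hat{\otimes}\,\hat{B}\to\hat{A}$, and then to recover the graded ideals from their closures using that the Kazhdan grading on $\tilde{\ggg}_e$ is strictly positive. By construction $\gr(\mathcal{J}_\dag)$ is the classical part of $\mathbf{R}_\hbar(\mathcal{J}_\dag)=\Phi_\hbar^{-1}(\bar{\mathcal{J}}_\hbar)\cap\mathbf{R}_\hbar(\mathscr{W})$, so the first step is to put $\overline{\mathbf{R}_\hbar(\mathcal{J}_\dag)}$ in a form in which reduction modulo $\hbar$ is transparent. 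Since $\mathcal{J}\subset\mathcal{U}$ is two-sided, $\bar{\mathcal{J}}_\hbar$ is a closed, $\hbar$-saturated, $\mathbb{C}^\times$-stable two-sided ideal of $\hat{A}_\hbar$ by Proposition~\ref{prop3.4}(1), hence so is $\Phi_\hbar^{-1}(\bar{\mathcal{J}}_\hbar)$ inside $\mathbf{A}_\hbar^{\wedge_0}(V)\,\hat{\otimes}_{\mathbb{C}[[\hbar]]}\,\hat{B}_\hbar$. Then I would invoke the structural fact (as in Losev's work) that every closed $\hbar$-saturated two-sided ideal of $\mathbf{A}_\hbar^{\wedge_0}(V)\,\hat{\otimes}_{\mathbb{C}[[\hbar]]}\,R$ is induced from an ideal of $R$; this gives $\Phi_\hbar^{-1}(\bar{\mathcal{J}}_\hbar)=\mathbf{A}_\hbar^{\wedge_0}(V)\,\hat{\otimes}_{\mathbb{C}[[\hbar]]}\,\mathcal{K}_\hbar$ with $\mathcal{K}_\hbar:=\Phi_\hbar^{-1}(\bar{\mathcal{J}}_\hbar)\cap\hat{B}_\hbar$, and intersecting with $\mathbf{R}_\hbar(\mathscr{W})=(\hat{B}_\hbar)_{\mathbb{C}^\times\text{-fin}}$ yields $\overline{\mathbf{R}_\hbar(\mathcal{J}_\dag)}=\mathcal{K}_\hbar$ (this is the identity recorded as~\eqref{eq5.2} and used in the proof of Proposition~\ref{propinclu}).

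Next I would take classical parts. By Proposition~\ref{prop3.4}(2), applied once with $(\mathcal{U},\hat{A})$ and once with $(\mathscr{W},\hat{B})$, the classical part of $\bar{\mathcal{J}}_\hbar$ is the closure $\overline{\gr(\mathcal{J})}$ in $\hat{A}$, and that of $\overline{\mathbf{R}_\hbar(\mathcal{J}_\dag)}$ is the closure $\overline{\gr(\mathcal{J}_\dag)}$ in $\hat{B}=S[[\tilde{\ggg}_e]]$. Because $\Phi_\hbar^{-1}(\bar{\mathcal{J}}_\hbar)$ is an induced ideal, reduction modulo $\hbar$ commutes with the pull-back along $\Phi_\hbar$ and with the intersection $\cap\,\hat{B}_\hbar$, so the classical part of $\mathcal{K}_\hbar$ is $\phi^{-1}\!\bigl(\overline{\gr(\mathcal{J})}\bigr)\cap\hat{B}$; combining, $\overline{\gr(\mathcal{J}_\dag)}=\phi^{-1}\!\bigl(\overline{\gr(\mathcal{J})}\bigr)\cap\hat{B}$, which is exactly the image of $\overline{\gr(\mathcal{J})}$ under the $\mathbb{C}^\times$-equivariant algebra homomorphism $\mathrm{pr}\circ\phi^{-1}\colon\hat{A}\to S[[V]]\,\hat{\otimes}\,\hat{B}\to\hat{B}$, $\mathrm{pr}$ being the augmentation killing $V$. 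By Lemma~\ref{decom of qun unive le} we have $\phi^{-1}(x)-x\in V\hat{A}$ for every $x\in\ggg$, so $\mathrm{pr}\circ\phi^{-1}$ sends $x$ to its $\tilde{\ggg}_e$-component; hence on $S[\ggg]\subset\hat{A}$ it is the canonical projection $S[\ggg]\to S[\ggg]/\mathbf{I}(\mathbf{S})=S[\tilde{\ggg}_e]$ (here $\ggg=V\oplus\tilde{\ggg}_e$ with $\mathbf{S}=\tilde{\ggg}_e$), and a standard equivariant continuity argument identifies the image of $\overline{\gr(\mathcal{J})}$ with the closure of $(\gr(\mathcal{J})+\mathbf{I}(\mathbf{S}))/\mathbf{I}(\mathbf{S})$ in $\hat{B}$.

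Finally I would pass from closures to graded pieces. Both $\gr(\mathcal{J}_\dag)$ and $(\gr(\mathcal{J})+\mathbf{I}(\mathbf{S}))/\mathbf{I}(\mathbf{S})$ are subspaces of $S[\tilde{\ggg}_e]=\gr(\mathscr{W})$ graded for the Kazhdan grading, which is non-negative with finite-dimensional graded pieces, and any such subspace is recovered from its closure in $\hat{B}=S[[\tilde{\ggg}_e]]$ by intersecting back with $S[\tilde{\ggg}_e]$; since the two closures agree by the previous step, so do the subspaces. Thus the canonical identification $S[\tilde{\ggg}_e]\cong S[\ggg]/\mathbf{I}(\mathbf{S})$, which is precisely the one induced on associated gradeds by $\Phi_\hbar$, carries $\gr(\mathcal{J}_\dag)$ onto $(\gr(\mathcal{J})+\mathbf{I}(\mathbf{S}))/\mathbf{I}(\mathbf{S})$, as required. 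I expect the main obstacle to be the very first step: showing that $\Phi_\hbar^{-1}(\bar{\mathcal{J}}_\hbar)$ is induced from an ideal of $\hat{B}_\hbar$. Everything afterwards is bookkeeping with the filtration--$\hbar$ dictionary, but it is this induced-ness---resting on the $\hbar$-saturation of $\bar{\mathcal{J}}_\hbar$ together with the simplicity of the Weyl-algebra factor $\mathbf{A}_\hbar^{\wedge_0}(V)$---that makes reduction modulo $\hbar$ commute with all the intersections appearing in the definition of $\bullet_\dag$.
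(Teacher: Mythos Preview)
Your proposal is correct and follows essentially the same route as the paper: establish the induced-ideal identity~\eqref{eq5.2}, take classical parts via Proposition~\ref{prop3.4}(2) and the second statement of Lemma~\ref{decom of qun unive le}, then use positivity of the Kazhdan grading on $\tilde{\ggg}_e$ (cf.\ Proposition~\ref{prop3.4}(3)) to descend from closures to graded ideals. The only cosmetic difference is that the paper obtains~\eqref{eq5.2} directly from the explicit adjoint-action decomposition in the proof of Theorem~\ref{SQDWtm} (stability of a two-sided ideal under $\ad(v_i)$ forces the tensor splitting), whereas you invoke it as a general structural fact from Losev; the underlying argument is the same.
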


\begin{proof}
The two sided  $\hbar$-saturated ideal $\Phi_{\hbar}(\bar{\mathcal{J}}_{\hbar})$ is stable to the adjoint action of $S[V,\hbar]^{\wedge_\chi}$.
From the proof of Theorem \ref{SQDWtm} we have
\begin{equation}\label{eq5.2}
\Phi_{\hbar}^{-1}(\bar{\mathcal{J}}_{\hbar})=S[V,\hbar]^{\wedge_\chi}\otimes (S[\tilde{\ggg}_{e},\hbar]^{\wedge_\chi}\cap \Phi_{\hbar}^{-1}(\bar{\mathcal{J}}_{\hbar}) ).
\end{equation}

Let $I^{\wedge}$, $\Phi_{\hbar}^{-1}((\bar{\mathcal{J}}_{\hbar}))^{\wedge}$ and $\mathbf{I}(\mathbf{S})^{\wedge}$ be the classical part of   $\mathbf{R}_{\hbar}(\mathcal{I}_{\dag})$,$\mathbf{R}_{\hbar}(\Phi_{\hbar}^{-1}(\bar{\mathcal{I}}_{\hbar}))$ and $\mathbf{I}(\mathbf{S})$ respectively. The equation \eqref{eq5.2} and the second statement of Lemma  \ref{decom of qun unive le} implies
$$ I^{\wedge} \simeq  (\Phi_{\hbar}^{-1}(\bar{\mathcal{J}}_{\hbar}))^{\wedge}+\mathbf{I}(\mathbf{S})^{\wedge})/\mathbf{I}(\mathbf{S})^{\wedge}.$$

Since $\gr(\mathcal{I}_\dag)$ is dense in $I^{\wedge}$, by the second statement of Lemma  \ref{decom of qun unive le} and  the third statement of Proposition \ref{prop3.4} we have
$$\gr(\mathcal{I}_{\dag}) \simeq (\gr(\mathcal{J})+\mathbf{I}(\mathbf{S}) )/\mathbf{I}(\mathbf{S}).$$
\end{proof}

Let $\mathfrak{v}=\mathfrak{v}_{\bar{0}}+\mathfrak{v}_{\bar{1}}$ be a superspace and $\mathcal{A}$ be a filtered associative algebra with $\gr(\mathcal{A})=S[\mathfrak{v}]$. For a two sided ideal $\mathcal{J} \subset \mathcal{A}$, we denote by $\mathbf{V}(\mathcal{J})$ the maximal
spectrum of $S[\mathfrak{v}_{\bar{0}}]/\gr(\mathcal{J} \cap S[\mathfrak{v}_{\bar{0}}])$. For an $\mathcal{A}$ module $M$ with annihilator $\mathcal{J}$, the associated variety $\mathbf{AV}(M)$ of $M$ is defined to be $\mathbf{V}(\mathcal{J})$. The set of the primitive ideals $\mathcal{J} \subset \mathcal{U}$ with $\mathbf{V}(\mathcal{J})=\overline{G_{\bar{0}}\cdot \chi}$ is denoted by $\Prim_{\mathbb{O}}(\mathcal{U})$.
Let $\mathbf{S}_{\bar{0}}$ be the Slodowy slice of $\mathcal{O}$ in $\mathfrak{g}_{\bar{0}}$.
\begin{theorem} \label{thm5.3}
For any $\mathcal{J} \in \Prim_{\mathbb{O}}(\mathcal{U})$, the composition factors of $\mathscr{W}/\mathcal{J}_{\dag}$
are finite-dimensional irreducible $\mathcal{U}(\ggg,e)$-modules. Any finite-dimensional irreducible $\mathcal{U}(\ggg,e)$-module is can be obtained in this way.
\end{theorem}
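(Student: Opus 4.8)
The plan is to adapt Losev's argument from \cite{Lo1}, using the realization $\mathscr{W}\cong\mathcal{U}(\ggg,e)$ of Theorem \ref{realzation of W}, the comparison of associated varieties under $\bullet_{\dag}$ from Proposition \ref{prop5.4}, and the Skryabin equivalence of Theorem \ref{Skryabin's equivalence}. \emph{Sufficiency.} Fix $\mathcal{J}\in\Prim_{\mathbb{O}}(\mathcal{U})$. By Proposition \ref{prop5.4} the associated graded of $\mathscr{W}/\mathcal{J}_{\dag}$ for the Kazhdan filtration is $\gr(\mathscr{W})/\gr(\mathcal{J}_{\dag})\cong S[\ggg]/(\gr(\mathcal{J})+\mathbf{I}(\mathbf{S}))$, whose even spectrum is the intersection $\mathbf{V}(\mathcal{J})\cap\mathbf{S}_{\bar 0}=\overline{G_{\bar 0}\cdot\chi}\cap\mathbf{S}_{\bar 0}$. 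Since the Slodowy slice $\mathbf{S}_{\bar 0}$ meets a nilpotent $G_{\bar 0}$-orbit $\mathbb{O}'$ only when $\chi\in\overline{\mathbb{O}'}$, and meets $G_{\bar 0}\cdot\chi$ itself in the single point $\chi$, this intersection equals $\{\chi\}$. Hence $\gr(\mathscr{W}/\mathcal{J}_{\dag})$ is finite-dimensional --- being a finitely generated module over a $0$-dimensional, hence finite-dimensional, commutative quotient of $S[(\ggg_{\bar 0})_e]$, with all the remaining (odd, and possibly $\Theta$) generators nilpotent --- so $\mathscr{W}/\mathcal{J}_{\dag}$ is finite-dimensional and nonzero, and by Theorem \ref{realzation of W} every composition factor of $\mathscr{W}/\mathcal{J}_{\dag}$ is a finite-dimensional irreducible $\mathcal{U}(\ggg,e)$-module.

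\emph{Exhaustion.} Given a finite-dimensional irreducible $\mathscr{W}$-module $N$, set $\mathcal{I}=\Ann_{\mathscr{W}}(N)$, an ideal of finite codimension, and consider the Whittaker module $\mathcal{S}(N)=Q_{\chi}\otimes_{\mathscr{W}}N$. By Theorem \ref{Skryabin's equivalence} it is a simple object of the category $\mathcal{C}$; since any $\mathcal{U}$-submodule of a Whittaker module is again Whittaker, $\mathcal{S}(N)$ is in fact simple as a $\mathcal{U}$-module, so $\mathcal{J}:=\Ann_{\mathcal{U}}(\mathcal{S}(N))$ is primitive, and $\mathcal{J}=\mathcal{I}^{\dag}$ by Theorem \ref{thm5.1}(\rmnum{2}).

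The heart of the matter is that $\mathcal{J}\in\Prim_{\mathbb{O}}(\mathcal{U})$, i.e. $\mathbf{V}(\mathcal{J})=\overline{G_{\bar 0}\cdot\chi}$. For the inclusion $\overline{G_{\bar 0}\cdot\chi}\subseteq\mathbf{V}(\mathcal{J})$ I would argue as in the classical case: $\mathcal{S}(N)\neq 0$ is a $\chi$-Whittaker module, so $\gr(\mathcal{S}(N))$, being induced from $\mmm$, has support containing $\overline{G_{\bar 0}\cdot\chi}$; since $\chi$ annihilates $\ggg_{\bar 1}$ this is a purely $\ggg_{\bar 0}$-statement. For the reverse inclusion I would invoke Lemma \ref{LEMGK}: as $N$ is finite-dimensional, $\mathrm{GK}_{\mathcal{U}}(\mathcal{S}(N))=\dim(\mmm'_{\bar 0})=\tfrac12\dim(G_{\bar 0}\cdot\chi)$; since $\mathcal{S}(N)$ is faithful over the primitive quotient $\mathcal{U}/\mathcal{J}$, the Gabber--Joseph bound yields $\dim\mathbf{V}(\mathcal{J})=\mathrm{GK}(\mathcal{U}/\mathcal{J})\le 2\,\mathrm{GK}_{\mathcal{U}}(\mathcal{S}(N))=\dim(G_{\bar 0}\cdot\chi)$, and combined with the previous inclusion and the irreducibility of the associated variety of a primitive ideal this forces $\mathbf{V}(\mathcal{J})=\overline{G_{\bar 0}\cdot\chi}$.

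Finally, by Proposition \ref{propinclu} one has $\mathcal{J}_{\dag}=(\mathcal{I}^{\dag})_{\dag}\subseteq\mathcal{I}$, hence a surjection of left $\mathscr{W}$-modules $\mathscr{W}/\mathcal{J}_{\dag}\twoheadrightarrow\mathscr{W}/\mathcal{I}$; by the Sufficiency part $\mathscr{W}/\mathcal{J}_{\dag}$, and therefore $\mathscr{W}/\mathcal{I}$, is finite-dimensional, and $N$, being a simple module over the finite-dimensional algebra $\mathscr{W}/\mathcal{I}$, occurs as a composition factor of $\mathscr{W}/\mathcal{I}$ viewed as a module over itself, hence of $\mathscr{W}/\mathcal{J}_{\dag}$. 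I expect the main obstacle to be the associated-variety computation of the previous paragraph: pinning $\mathbf{V}(\mathcal{J})$ down exactly requires transporting the Gabber--Joseph bound, Joseph's irreducibility theorem, and the Whittaker support estimate to the super (type $\mathrm{I}$) setting, though since $\chi$ is supported on $\ggg_{\bar 0}$ and $\mathbf{V}$ only records the even directions I expect these to reduce cleanly to the known statements for the reductive Lie algebra $\ggg_{\bar 0}$; the super bookkeeping (the $\bbc[\Theta]$ factor and the exterior generators) in the finiteness step is routine.
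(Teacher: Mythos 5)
Your proof is correct and follows essentially the same route as the paper's: Proposition \ref{prop5.4} together with $\mathbf{S}_{\bar{0}}\cap\overline{G_{\bar{0}}\cdot\chi}=\{\chi\}$ for the finiteness of $\mathscr{W}/\mathcal{J}_{\dag}$, and Proposition \ref{propinclu} plus the surjections $\mathscr{W}/(\mathcal{I}^{\dag})_{\dag}\twoheadrightarrow\mathscr{W}/\mathcal{I}\twoheadrightarrow N$ for exhaustion. The only difference is organizational: the verification that $\mathcal{I}^{\dag}\in\Prim_{\mathbb{O}}(\mathcal{U})$, which you carry out inside the exhaustion step via Skryabin's equivalence, Lemma \ref{LEMGK} and the Gabber--Joseph bound reduced to $\ggg_{\bar{0}}$, is exactly what the paper defers to (and proves in) Theorem \ref{main thm on rep}, so your write-up is if anything more self-contained at this point.
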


\begin{proof}
 By Proposition \ref{prop5.4} we have $\gr(\mathcal{J}_{\dag})=(\gr(\mathcal{J})+\mathbf{I}(\mathbf{S}))/\mathbf{I}(\mathbf{S})$.
Thus we have
\begin{equation}\label{eq5.1}
\gr(\mathcal{J}_{\dag})\cap \mathbb{C}[\mathbf{S}_{\bar{0}}]=(\gr(\mathcal{J})\cap S[\ggg_{\bar{0}}] +\mathbf{I}(\mathbf{S})\cap S[\ggg_{\bar{0}}])/\mathbf{I}(\mathbf{S})\cap S[\ggg_{\bar{0}}].
\end{equation}
This implies
$$\mathbf{V}(\gr(\mathcal{J}_{\dag}))=\mathbf{S}_{\bar{0}}\cap \mathcal{O}=\chi.$$
Hence $\mathcal{J}_\dag$ has finite codimension in $\mathscr{W}$, i.e. $\mathscr{W}/\mathcal{J}_{\dag}$ is a finite-dimensional $\mathscr{W}$-module.
So we prove the first statement.

Let $M$ be an irreducible $\mathscr{W}$-module and $\mathcal{I} \in \Prim^{\fin}(\mathscr{W})$ be
its annihilator. By Proposition \ref{propinclu} we have the following canonical surjective homomorphisms $\mathscr{W}/((\mathcal{I}^\dag)_\dag) \twoheadrightarrow \mathscr{W}/(\mathcal{I}) \twoheadrightarrow M$.  Thus the theorem follows.
\end{proof}

\begin{theorem}\label{main thm on rep}
For any $\mathcal{I} \in \Prim^{\fin}(\mathscr{W})$, we have $\mathcal{I}^{\dag}\in \Prim_{\mathbb{O}}\mathcal{U}$;
for any $\mathcal{J} \in\Prim_{\mathbb{O}}\mathcal{U}$, there are finitely many  $ \mathcal{I} \in \Prim^{\fin}(\mathscr{W})$ with $\mathcal{J}=\mathcal{I}^{\dag}$.
\end{theorem}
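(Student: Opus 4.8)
The plan is to prove the two assertions of Theorem \ref{main thm on rep} separately. Only the first — that $\bullet^{\dag}$ carries $\Prim^{\fin}(\mathscr{W})$ into $\Prim_{\mathbb{O}}(\mathcal{U})$ — requires real work: it splits into a primitivity statement, handled via Skryabin's equivalence, and an associated-variety computation modelled on that of Theorem \ref{thm5.3}. The second assertion will then follow formally from Propositions \ref{propinclu} and \ref{prop5.4} together with Theorem \ref{thm5.3}.

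\emph{First assertion.} Fix $\mathcal{I}\in\Prim^{\fin}(\mathscr{W})$ and write $\mathcal{I}=\Ann_{\mathscr{W}}(N)$ for a finite-dimensional simple $\mathscr{W}$-module $N$ (which exists since $\mathscr{W}/\mathcal{I}$ is finite-dimensional). By Theorem \ref{thm5.1}(\rmnum{2}), $\mathcal{I}^{\dag}=\Ann_{\mathcal{U}}(\mathcal{S}(N))$; and since $\mathcal{S}$ is an equivalence onto the category of Whittaker modules (Theorem \ref{Skryabin's equivalence}) and every $\mathcal{U}$-submodule of a Whittaker module is again Whittaker, $\mathcal{S}(N)$ is a simple $\mathcal{U}$-module, so $\mathcal{I}^{\dag}$ is primitive. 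It remains to identify $\mathbf{V}(\mathcal{I}^{\dag})$. Here I would start from the description $\mathbf{R}_{\hbar}(\mathcal{I}^{\dag})=\mathbf{R}_{\hbar}(\mathcal{U})\cap\Phi_{\hbar}^{-1}(S[[V,\hbar]]\otimes\overline{\mathcal{I}}_{\hbar})$ of Proposition \ref{prop5.2}, pass to classical parts, and run the argument of Proposition \ref{prop5.4} and \eqref{eq5.1} in the reverse direction ``from $\mathscr{W}$ up to $\mathcal{U}$'': because $\mathcal{I}$ has finite codimension, $\gr(\mathcal{I})$ is a graded ideal cutting out the single point $\chi$ in the Slodowy slice, so the zero locus of $\gr(\mathcal{I}^{\dag})$ meets $\mathbf{S}_{\bar{0}}$ only at $\chi$; being closed and $G_{\bar{0}}$-stable it then contains $\overline{G_{\bar{0}}\cdot\chi}$. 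Transversality of $\mathbf{S}_{\bar{0}}$ to nilpotent orbits — the estimate $\dim(Z\cap\mathbf{S}_{\bar{0}})\geq\dim Z-\dim(G_{\bar{0}}\cdot\chi)$ for any component $Z$ of $\mathbf{V}(\mathcal{I}^{\dag})$ meeting $\mathbf{S}_{\bar{0}}$ — forces $\mathbf{V}(\mathcal{I}^{\dag})=\overline{G_{\bar{0}}\cdot\chi}$, i.e. $\mathcal{I}^{\dag}\in\Prim_{\mathbb{O}}(\mathcal{U})$. As a consistency check, Lemma \ref{LEMGK} gives $\mathrm{GK}_{\mathcal{U}}(\mathcal{S}(N))=\dim(\mmm'_{\bar{0}})=\tfrac12\dim(G_{\bar{0}}\cdot\chi)$, matching Bernstein's inequality and confirming the expected dimension.

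\emph{Second assertion.} Let $\mathcal{J}\in\Prim_{\mathbb{O}}(\mathcal{U})$ and suppose $\mathcal{I}\in\Prim^{\fin}(\mathscr{W})$ satisfies $\mathcal{I}^{\dag}=\mathcal{J}$. By Proposition \ref{propinclu}, $\mathcal{J}_{\dag}=(\mathcal{I}^{\dag})_{\dag}\subseteq\mathcal{I}$. By Theorem \ref{thm5.3}, $\mathscr{W}/\mathcal{J}_{\dag}$ is finite-dimensional, hence has only finitely many isomorphism classes of simple modules and therefore only finitely many primitive ideals. Since $\mathcal{I}\supseteq\mathcal{J}_{\dag}$ is primitive, $\mathcal{I}/\mathcal{J}_{\dag}$ is a primitive ideal of $\mathscr{W}/\mathcal{J}_{\dag}$ and $\mathcal{I}$ is its preimage; thus $\mathcal{I}\mapsto\mathcal{I}/\mathcal{J}_{\dag}$ embeds $\{\mathcal{I}\in\Prim^{\fin}(\mathscr{W}):\mathcal{I}^{\dag}=\mathcal{J}\}$ into the finite set $\Prim(\mathscr{W}/\mathcal{J}_{\dag})$, giving finiteness.

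\emph{Main obstacle.} The soft inclusion $\overline{G_{\bar{0}}\cdot\chi}\subseteq\mathbf{V}(\mathcal{I}^{\dag})$ is immediate; the difficulty is the reverse inclusion. One must (a) make precise the ``upward'' slice-restriction compatibility for $\bullet^{\dag}$ that is only implicit in Proposition \ref{prop5.2} (the analogue, for $\bullet^{\dag}$, of Proposition \ref{prop5.4}), and (b) rule out lower-dimensional spurious components of $\mathbf{V}(\mathcal{I}^{\dag})$. Point (b) needs some care because, by definition, $\mathbf{V}$ records only the even part $\mathcal{I}^{\dag}\cap S[\mathfrak{g}_{\bar{0}}]$, so the irreducibility of the associated variety of a primitive ideal — classical over the reductive Lie algebra $\mathfrak{g}_{\bar{0}}$ — has to be invoked for $\mathcal{U}(\mathfrak{g}_{\bar{0}})$ and transported through $\Phi$. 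Everything else in the argument is formal.
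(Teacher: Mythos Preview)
Your proof of the second assertion is essentially the paper's: both use Proposition~\ref{propinclu} to get $\mathcal{J}_\dag\subset\mathcal{I}$, then the finite codimension of $\mathcal{J}_\dag$ (from Theorem~\ref{thm5.3}) to bound the number of primes above it. The primitivity half of the first assertion is also the same, via Skryabin's equivalence and Theorem~\ref{thm5.1}(\rmnum{2}).

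The associated-variety step, however, is handled quite differently, and here your proposal has a real gap. The paper does \emph{not} run a slice-restriction argument at all. It obtains the inclusion $\overline{G_{\bar 0}\cdot\chi}\subset\mathbf{V}(\mathcal{I}^{\dag})$ directly from Premet's result that a Whittaker module for $(\ggg_{\bar 0},e)$ has associated variety containing the orbit closure \cite[Theorem~3.1]{Pr2}, and for the upper bound it passes to an irreducible $\mathcal{U}(\ggg_{\bar 0})$-subquotient $M^{\dag}$ of $\mathcal{S}(N)$ with $\mathbf{AV}(M^{\dag})=\mathbf{V}(\mathcal{I}^{\dag})$ and chains the inequalities
\[
\dim\mathbf{V}(\mathcal{I}^{\dag})\le 2\,\mathrm{GK}_{\mathcal{U}(\ggg_{\bar 0})}(M^{\dag})\le 2\,\mathrm{GK}_{\mathcal{U}(\ggg_{\bar 0})}(\mathcal{S}(N))=2\dim(\mmm_{\bar 0})=\dim(G_{\bar 0}\cdot\chi),
\]
using \cite[\S10.7]{Jan}, \cite[Lemma~7.3.3(a)]{Mus}, and Lemma~\ref{LEMGK}. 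In other words, what you call a ``consistency check'' is the paper's actual argument for the upper bound.

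Your obstacles (a) and (b) are genuine, not cosmetic. For (a), Proposition~\ref{prop5.4} applied to $\mathcal{I}^{\dag}$ gives $\mathbf{V}((\mathcal{I}^{\dag})_\dag)=\mathbf{V}(\mathcal{I}^{\dag})\cap\mathbf{S}_{\bar 0}$, but Proposition~\ref{propinclu} only yields $(\mathcal{I}^{\dag})_\dag\subset\mathcal{I}$, hence $\mathbf{V}(\mathcal{I}^{\dag})\cap\mathbf{S}_{\bar 0}\supset\{\chi\}$, the wrong containment; extracting the reverse inclusion from Proposition~\ref{prop5.2} would require controlling the closure of an intersection with $\mathbf{R}_\hbar(\mathcal{U})$, which is not automatic. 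For (b), irreducibility of associated varieties is a theorem about primitive ideals of $\mathcal{U}(\ggg_{\bar 0})$, and $\mathcal{I}^{\dag}$ lives in $\mathcal{U}(\ggg)$; ``transporting through $\Phi$'' does not bridge this. The GK-dimension route sidesteps both issues at once, so I would promote it from a sanity check to the main line of the proof.
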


\begin{proof}
Let $\mathcal{I} \in \Prim^{\fin}\mathscr{W}$ be the annihilator of a given irreducible $\mathscr{W}$ module $M$.
By Theorem \ref{thm5.1} (\rmnum{2}), we have $\mathcal{I}^{\dag}=\Ann(\mathcal{S}(M))$. Since $\mathcal{S}(M)$ is a Whittaker module for the pair $(\ggg_{\bar{0}},e)$, we have $\overline{G_{\bar{0}}\cdot\chi} \subset\mathrm{V}(\mathcal{I}^{\dag})$ \cite[Theorem 3.1]{Pr2}.

Since the associated variety of an irreducible $\mathcal{U}(\ggg_{\bar{0}})$ module is the closure of a nilpotent orbit, there exists an irreducible sub-quotient $M^{\dag}$ of $\mathcal{S}(M)$ with $\mathbf{AV}(M^{\dag})=\mathbf{V}(\mathcal{I}^{\dag})$. For the annihilator $\mathcal{I}_0^{\dag}\subset \mathcal{U}(\ggg_{\bar{0}})$ of $M^{\dag}$, we have $\dim(V(\mathcal{I}_0^{\dag}))\leq 2\mathrm{GK}_{\mathcal{U}\ggg_{\bar{0}}}(M^{\dag})$ by \cite[\S10.7]{Jan}.  By \cite[Lemma 7.3.3(a)]{Mus} we have $\mathrm{GK}_{\mathcal{U}(\ggg_{\bar{0}})}(M^{\dag})\leq \mathrm{GK}_{\mathcal{U}(\ggg_{\bar{0}})}(\mathcal{S}(M))$. By Lemma \ref{LEMGK} we have $\mathrm{GK}_{\mathcal{U}\ggg_{\bar{0}}}(\mathcal{S}(M))=\dim(\mmm_{\bar{0}})$.  The later equals $\dim(G_{\bar{0}}\cdot \chi)/2$. Thus finally
we have $\dim(V(\mathcal{I}^{\dag})) \leq \dim(G_{\bar{0}}\cdot \chi)$ and hence the first statement follows.

Suppose $\mathcal{J}=\mathcal{I}^\dag$ for some $\mathcal{I} \in \Prim^{\fin}\mathscr{W}$. By Proposition \ref{propinclu} we have $\mathcal{I}\supset \mathcal{J}_\dag$.   By the proof of Theorem \ref{thm5.3}, $\mathcal{J}_\dag$ has finite codimension in $\mathscr{W}$.  Thus any prime ideal containing $\mathcal{J}_\dag$ is minimal. So by \cite[Proposition 3.1.10]{Dix} the number of primitive ideals $ \mathcal{I} \in \Prim^{\fin}\mathscr{W}$ with $\mathcal{J}=\mathcal{I}^\dag$ is finite.
 \end{proof}

 Finally we summarize our results on $\mathrm{Irr}^{\text{fin}}(\mathscr{W})$ in this section.
 For a Lie superalgebra $\ggg=\ggg_{\bar{0}} \oplus \ggg_{\bar{1}}$  of type \uppercase\expandafter{\romannumeral1}, Theorem \ref{main thm on rep} provides us a map
$$\Prim^{\fin}(\mathscr{W}) \rightarrow \Prim_{\mathbb{O}}\mathcal{U}; \quad  \mathcal{I} \mapsto \mathcal{I}^\dag $$
with finite fiber. For the universal enveloping algebra $\mathcal{U}$ of $\ggg$,  Leizter established  a bijection
$$ \nu : \Prim(\mathcal{U})\longrightarrow \Prim(\mathcal{U}(\ggg_{\bar{0}})).$$
We refer  the construction of $\nu$ to  \cite[\S3.3]{Le}  or \cite[\S15.2.2]{Mus}.  It can be easily deduced from the construction that $\nu(\mathcal{I})$ is supported on $G_{\bar{0}}\cdot \chi$ if and only if so is $\mathcal{I}$. Thus we make a step forward a description of the set $\mathrm{Irr}^{\text{fin}}(\mathscr{W})$.

\end{document}